\documentclass[12pt]{amsart}

\usepackage{mathrsfs}
\newtheorem{theorem}{Theorem}[section]

\newtheorem{prop}[theorem]{Proposition}
\newtheorem{cor}[theorem]{Corollary}
\newtheorem{definition}[theorem]{Definition}
\newtheorem{remark}[theorem]{Remark}
\newtheorem*{theorem*}{Theorem}
\def\bl{\boldsymbol}

\def\mob{\text{M\"ob}}

\theoremstyle{definition}

\usepackage{soul}
\setstcolor{red}
\usepackage{marginnote}
\usepackage[pagebackref,hypertexnames=false, colorlinks, citecolor=red,linkcolor=blue, urlcolor=red]{hyperref}

\theoremstyle{remark}

\numberwithin{equation}{section}

\newcommand{\overbar}[1]{\mkern 1.5mu\overline{\mkern-1.5mu#1\mkern-1.5mu}\mkern 1.5mu}

\usepackage{lmodern}

\usepackage{enumitem}

\usepackage{color}
\usepackage{comment}
\usepackage{fullpage}

\usepackage{ulem}

\usepackage{tikz-cd}

\begin{document}

% \title[short text for running head]{full title}
% \title[Homogeneous pairs of multiplication operators -- automorphism group of symmetrized bi-disc]{Pairs of multiplication operators homogeneous under the automorphism group of the symmetrized bi-disc}

\title[M\"ob-homogeneity]{M\"OB-HOMOGENEOUS ANALYTIC HILBERT MODULES OVER THE BIDISC}

%    Only \author and \address are required; other information is
%    optional.  Remove any unused author tags.

%    author one information
% \author[short version for running head]{name for top of paper}
\author[J. Das]{Jyotirmay Das}
\address[J. Das]{Institute for Advancing Intelligence\\ TCG Centres for Research and Education in Science and Technology \\ Kolkata - 700091, and NIT Durgapur\\ India}
\curraddr{}
\email{iamjyotirmay1999@gmail.com}
\thanks{The first-named author gratefully acknowledges the generous support provided by TCG CREST Ph.D Fellowships.}

%    author two information
\author[S. Hazra]{Somnath Hazra}
\address[S. Hazra]{Institute for Advancing Intelligence\\ TCG Centres for Research and Education in Science and Technology \\ Kolkata - 700091}
\email{somnath.hazra@tcgcrest.org}
\thanks{}

%\author[G. Misra]{Gadadhar Misra}
%\address[G. Misra]{Indian Statistical Institute, Bangalore - 560059\\           and Indian Institute of Technology, Gandhinagar - 382055, India}
%\curraddr{}
%\email{gm@isibang.ac.in}

%    \subjclass is required.
\subjclass[2020]{Primary 47A13, 47B13, 47B32; Secondary 46E22}
\keywords{Analytic Hilbert Modules, M\"ob-homogeneous, Curvature, Reproducing kernel, K\"ahler-Einstein metric}
%\date{\today}

\dedicatory{}

%    "Communicated by" -- provide editor's name; required.
%\commby{}

\begin{abstract} An analytic Hilbert module $\mathcal{H}$ over the polynomial ring, consisting of holomorphic functions over the bidisc, is said to be M\"ob-homogeneous if the corresponding pair of multiplication operators is homogeneous with respect to the diagonal action of the group $\{(\varphi,\varphi): \varphi \in \mbox{M\"ob}\} \cong \mbox{M\"ob}$. In this article, we construct three families of mutually unitarily inequivalent M\"ob-homogeneous analytic Hilbert modules, distinct from the family of weighted Bergman modules over the bidisc. We further show that none of the reproducing kernels in one of these families induces a K\"ahler--Einstein metric on the bidisc.
\end{abstract}

\maketitle

%    Text of article.

% Bibliographies can be prepared with BibTeX using amsplain,
%    amsalpha, or (for "historical" overviews) natbib style.
\bibliographystyle{amsplain}
%    Insert the bibliography data here.
\baselineskip = 16pt

\section{Introduction}
Let $\mathbb{D}$ denote the open unit disc in the complex plane $\mathbb{C}$ and M\"ob be the group of all the biholomorphic automorphisms of $\mathbb{D}$. Suppose M\"ob $\times$ M\"ob is the direct product of two copies of the group M\"ob. Every $(\varphi_1, \varphi_2) \in \mbox{M\"ob} \times \mbox{M\"ob}$ gives rise to a biholomorphism of $\mathbb{D}^2$ via the map
$$(z_1, z_2) \to (\varphi_1(z_1), \varphi_2(z_2)),\,\,(z_1, z_2) \in \mathbb{D}^2.$$
Note that the biholomorphic automorphism group Aut$(\mathbb{D}^2)$ of $\mathbb{D}^2$ is the semi-direct product of the group M\"ob $\times$ M\"ob and the permutation group $\mathcal{S}_2$ of two elements. The diagonal subgroup $\{(\varphi, \varphi) : \varphi \in \mbox{M\"ob}\}$ of M\"ob $\times$ M\"ob is naturally isomorphic to M\"ob. Therefore, every $\varphi \in \mbox{M\"ob}$ gives rise to a biholomorphic map $\tilde{\varphi}$ of $\mathbb{D}^2$ defined by 
$$\tilde{\varphi}(z_1, z_2) = (\varphi(z_1), \varphi(z_2)),\,\,(z_1, z_2) \in \mathbb{D}^2.$$
Throughout this article, given a $\varphi \in \mbox{M\"ob}$, $\tilde{\varphi}$ represents the biholomorphic map on $\mathbb{D}^2$ defined in the equation above.

Let ${\mathcal M}_n(\mathbb C)$ denote the vector space of all $n\times n$ complex matrices and $\langle ~,~ \rangle_{{\mathbb C}^n}$ be the standard inner product in ${\mathbb C}^n$, where $n$ is an arbitrary but fixed natural number. Suppose $\Omega$ is a bounded domain in $\mathbb{C}^d$ for a natural number $d$. A function $K: \Omega \times\Omega \rightarrow {\mathcal M}_n(\mathbb C)$ is said to be non-negative definite, if 
$$\sum_{i,j=1}^p\langle K(w_i,w_j)\zeta_j,\zeta_i\rangle_{{\mathbb{C}^n}} \geq 0$$
holds for every subsets $\{w_1, \ldots , w_p\}$ of $\Omega$ and $\zeta_1,\ldots,\zeta_p \in{\mathbb C}^n$, $p\in \mathbb{N}$. Due to \cite[Theorem 6.12]{PR}, a non-negative definite function $K: \Omega \times\Omega \rightarrow {\mathcal M}_n(\mathbb C)$ gives rise to a Hilbert space $\mathcal{H}$ consisting of $\mathbb{C}^n$-valued functions on $\Omega$ such that the linear span of $K(\cdot, w)\zeta$ , $w\in \Omega$, $\zeta\in{\mathbb C}^n$, is  dense in $\mathcal{H}$ and $K$ has the reproducing property, namely, $$\langle f,K(\cdot,w)\zeta\rangle =
\langle f(w),\zeta\rangle_{{\mathbb C}^n},\, w\in \Omega.$$
The function $K$ is said to be the reproducing kernel of the Hilbert space $\mathcal{H}$. In general, a reproducing kernel Hilbert space $\mathcal{H}$ with a reproducing kernel $K$ is denoted by $(\mathcal{H}, K)$. Moreover, if $K$ is holomorphic in first $d$-variables and anti-holomorphic in last $d$-variables, then $\mathcal{H}$ consists of $\mathbb{C}^n$-valued holomorphic functions over $\Omega$. Throughout this article, we consider non-negative definite functions  $K: \Omega \times\Omega \rightarrow {\mathcal M}_n(\mathbb C)$ that are holomorphic in the first $d$-variables and anti-holomorphic in the last $d$-variables. 

If $\Omega$ is simply connected, and $K$ is a non-negative definite kernel with $K(z,w) \neq 0$, $z,w\in \Omega$, then, considering the principal branch of the logarithm on $\Omega\times \Omega$, it follows that $K^t$ is holomorphic in the first $d$-variables and anti-holomorphic in the last $d$-variables for any positive real number $t$. However, it is not obvious that it is non-negative definite if $t\not \in \mathbb{N}$. The set 
\[\mathcal{W}_\Omega(K):=\{ t > 0 \mid K^t \text{ is non-negative definite}\}\]  
is known as the Wallach set.

Let $\mathcal{H}$ be a Hilbert space consisting of holomorphic functions on $\Omega$ such that the multiplication operators $M_{z_i}$, $1 \leq i \leq d$, by coordinate functions $z_i$ are bounded on $\mathcal{H}$. The tuple of multiplication operators $\boldsymbol{M} = (M_{z_1}, \ldots, M_{z_d})$ on $\mathcal{H}$ naturally defines a module map $\mathfrak{m}: \mathbb{ C } [ \boldsymbol{ z }] \times \mathcal{H} \to \mathcal{H}$ defined by $\mathfrak{m} (p, f) = p(\boldsymbol{M}) f$, making $\mathcal{H}$ to be a module over the polynomial algebra $\mathbb{C}[\boldsymbol{z}]$. Hilbert modules over a function algebra were introduced by R. G. Douglas, and an extensive study appeared in \cite{RGDM,GDM,RGDVIP,JS}.
A Hilbert space $\mathcal{H}$ is said to be an analytic Hilbert module over the polynomial algebra $\mathbb{C}[\boldsymbol{z}]$ if 
\begin{itemize}
    \item $\mathcal{H}$ consists of holomorphic functions on some bounded domain $\Omega \subset \mathbb{C}^d$,
    \item $\mathbb{C}[\boldsymbol{z}]$ is dense in $\mathcal{H}$,
    \item $\mathcal{H}$ possesses a reproducing kernel on $\Omega$ and
    \item the multiplication operators $M_{z_i}$, $1 \leq i \leq d$, by coordinate functions $z_i$ are bounded on $\mathcal{H}$.
\end{itemize}
A detailed study of such Hilbert modules appeared in \cite{BMS, CG}. A consequence of the polynomial density is that, for every $\boldsymbol{w} = (w_1, \ldots, w_n)$, the dimension of $\bigcap_{i=1}^m \ker (M_{z_i} - w_i)^*$ is $1$ and it is spanned by the element $K(\cdot, \boldsymbol{w})$. Let $\gamma : \Omega^* :=\{\bar{\boldsymbol{w}} : \boldsymbol{w} \in \Omega\} \to \mathcal{H}_K$ be the map defined by $\gamma(\boldsymbol{w}) = K(\cdot, \bar{\boldsymbol{w}})$, $\boldsymbol{w} \in \Omega$. The map $\gamma$ defines a Hermitian holomorphic line bundle $E_K$ on $\Omega^*$. The fiber of $E_K$ at $\bar{\boldsymbol{w}}$ is $\bigcap_{i=1}^m \ker (M_{z_i} - w_i)^*$ and the Hermitian structure on $E_K$ is defined by $K(\boldsymbol{w}, \boldsymbol{w})$. Let 
\begin{equation} \label{eqn:intro1} 
\mathcal{K}(\bar{\boldsymbol{w}}) = -\sum_{ i, j }^{ m }  \frac{ \partial^2 }{ \partial w_i \partial \overline{ w }_j } \log K (\boldsymbol{w},\boldsymbol{w})   dw_i \wedge d\overline{ w}_j,\boldsymbol{w}w \in \Omega,
\end{equation} 
denote the curvature $(1, 1)$ form and 
\begin{equation}\label{eqn:intro2}
 \mathbb{K}(\boldsymbol{w}):= \big(\!\! \big( \frac{ \partial^2 }{ \partial w_i \partial \overline{ w_j } } \log K ( \boldsymbol{w},\boldsymbol{w} ) \big)\! \!\big),  \boldsymbol{w} \in \Omega.
\end{equation}
denote the curvature matrix of the line bundle $E_K$. Cowen and Douglas show that the curvature $(1, 1)$ form $\mathcal{K}$ (equivalently the curvature matrix $\mathbb{K}$) determines the unitary equivalence class of the analytic Hilbert module $\mathcal{H}_K$. 

\begin{definition}\label{def1}
    A commuting pair of bounded linear operators $\boldsymbol{T} = (T_1, T_2)$ on a complex separable Hilbert space $\mathcal{H}$ is said to be \textit{M\"ob-homogeneous} if the Taylor joint spectrum of $\boldsymbol{T}$ lies in $\overline{\mathbb D}^2$ and $\tilde{\varphi}(T_1, T_2) = (\varphi(T_1), \varphi(T_2))$ is unitarily equivalent to $(T_1, T_2)$ for every $\varphi$ in M\"ob. An analytic Hilbert module $\mathcal{H}_K$ consisting of holomoprhic functions over $\mathbb{D}^2$ is said to be M\"ob-homogeneous, if the pair of multiplication operators $\boldsymbol{M} = (M_{z_1}, M_{z_2})$ is M\"ob-homogeneous.
\end{definition}

Let $\lambda$ be a positive real number, $B^{(\lambda)}(z, w) = \frac{1}{(1 - z \bar{w})^{\lambda}},\,\,z, w \in \mathbb{D}$ denote the weighted Bergman kernel of $\mathbb{D}$ and $\mathbb{A}^{(\lambda)}(\mathbb{D})$ denote the weighted Bergman space corresponding to $B^{(\lambda)}$. For any $\lambda, \mu > 0$, $\mathbb{A}^{(\lambda, \mu)}(\mathbb{D}^2) := \mathbb{A}^{(\lambda)}(\mathbb{D}) \otimes \mathbb{A}^{(\mu)}(\mathbb{D})$ is a reproducing kernel Hilbert space with the reproducing kernel $\boldsymbol{B}^{(\lambda, \mu)}(\boldsymbol{z}, \boldsymbol{w}) = \frac{1}{(1 - z_1\bar{w}_1)^{\lambda}(1 - z_2 \bar{w}_2)^{\mu}}$, $\boldsymbol{z} = (z_1, z_2),$ $\boldsymbol{w} = (w_1, w_2) \in \mathbb{D}^2$. Moreover, for each $\lambda, \mu > 0$, $\mathbb{A}^{(\lambda, \mu)}(\mathbb{D}^2)$ is an analytic Hilbert module, known as the weighted Bergman module over $\mathbb{D}^2$. In \cite{DEVH}, it is proved that if $\mathcal{H}_K$ is a  M\"ob $\times$ M\"ob-homogeneous analytic Hilbert module (defined similarly to Definition \ref{def1}), then $\mathcal{H}_K$ is unitarily equivalent to $\mathbb{A}^{(\lambda, \mu)}(\mathbb{D}^2)$ for some $\lambda, \mu > 0$. Also, up to unitary equivalence, $\mathbb{A}^{(\lambda, \lambda)}(\mathbb{D}^2),$ $\lambda > 0$, exhaust all Aut$(\mathbb{D}^2)$-homogeneous analytic Hilbert modules (cf. \cite{DEVH}). Therefore, it is natural to ask if there exists a M\"ob-homogeneous analytic Hilbert module that is not M\"ob $\times$ M\"ob-homogeneous. In this article, we construct three distinct families of mutually unitarily inequivalent families of M\"ob-homogeneous analytic Hilbert modules that are not M\"ob $\times$ M\"ob-homogeneous.

For $\lambda > 0$, let $\mathbb{A}^{(\lambda)}_{\text{\tiny{sim}}}(\mathbb{D}^2)$ and $\mathbb{A}^{(\lambda)}_{\text{\tiny{anti}}}(\mathbb{D}^2)$ denote the set of all symmetric and anti-symmetric functions in $\mathbb{A}^{(\lambda, \lambda)}(\mathbb{D}^2)$, respectively. It follows from \cite[Lemma 4.4]{BGMS} that the spaces $\mathbb{A}^{(\lambda)}_{\text{\tiny{sim}}}(\mathbb{D}^2)$ and $\mathbb{A}^{(\lambda)}_{\text{\tiny{anti}}}(\mathbb{D}^2)$ are reproducing kernel Hilbert spaces over $\mathbb{D}^2$. Let $K^{(\lambda)}_{\text{{\tiny{sim}}}}$ and $K^{(\lambda)}_{\text{{\tiny{anti}}}}$ be the reproducing kernels of $\mathbb{A}^{(\lambda)}_{\text{\tiny{sim}}}(\mathbb{D}^2)$ and $\mathbb{A}^{(\lambda)}_{\text{\tiny{anti}}}(\mathbb{D}^2)$, respectively. For any $\mu, \nu > 0$, consider the non-negative function $K^{(\lambda,\mu, \nu)}: \mathbb{D}^2 \times \mathbb{D}^2 \to \mathbb{C}$, defined by 
\begin{equation*}
K^{(\lambda, \mu, \nu)}(\bl z, \bl w) = \frac{1}{(1-z_1\overbar{w}_1)^{\mu}(1-z_2\overbar{w}_2)^{\nu}} \left[\left(K^{(\lambda)}_{\text{{\tiny{sim}}}}(\bl z, \bl w)\right)^3 + \left(K^{(\lambda)}_{\text{{\tiny{anti}}}}(\bl z, \bl w)\right)^3\right], 
\end{equation*}
$\bl z = (z_1, z_2),\,\,\bl w = (w_1, w_2)\in\mathbb{D}^2$. Let $\mathcal{A}^{(\lambda,\mu,\nu)}$ denote the Hilbert space corresponding to the reproducing kernel $K^{(\lambda,\mu,\nu)}$. In section \ref{sec 3}, it is proved that $\{\mathcal{A}^{(\lambda,\mu,\nu)} : \lambda, \mu, \nu > 0\}$ gives rise to a family of mutually unitarily inequivalent M\"ob-homogeneous analytic Hilbert modules.

For any $\alpha, \beta, \gamma$, let $E_{\alpha, \beta}^{\gamma}$ be a trivial Hermitian holomorphic line bundle over $\mathbb{D}^2$ determined by the holomorphic frame \
\begin{equation}\label{eqn:intro3}
    B_{(\alpha,\beta, \gamma)} (\boldsymbol{z}, \boldsymbol{w}) = (1-z_1\bar{w}_1)^{-\alpha} (1-z_2\bar{w}_2)^{-\beta} (1-z_1\bar{w}_2)^{\gamma} (1-z_2\bar{w}_1)^{\gamma}
\end{equation}
at $\boldsymbol{w} \in \mathbb{D}^2$. In \cite[Theorem 4]{DM}, it is proved that the rank $2$ jet bundle of a Hermitian holomorphic line bundle $E$ over $\mathbb{D}^2$ is homogeneous with respect to the group M\"ob if and only if $E$ is isomorphic to $E_{\alpha, \beta}^{\gamma}$ for some $\alpha, \beta, \gamma$ such that $\alpha, \beta > 0$ and $\alpha \beta > |\gamma|^2$. In section \ref{sec 3}, it is proved that the function $B_{(\alpha,\beta, \gamma)}$ is non-negative definite if $\alpha, \beta > \gamma > 0$. Also, for such $\alpha, \beta, \gamma$, the non-negative definite function $B_{(\alpha,\beta, \gamma)}$ gives rise to a M\"ob-homogeneous analytic Hilbert module $\mathcal{H}^{(\alpha, \beta, \gamma)}$ consisting of holomorphic functions over $\mathbb{D}^2$.

For any $\alpha, \beta > \gamma > 0$, let $\mathcal{B}^{(\alpha, \beta, \gamma)}$ denote the curvature matrix defined via Equation \eqref{eqn:intro2} and for any $\eta > 0$, let $\mathcal{B}^{(\alpha, \beta, \gamma, \eta)} := \big[B_{(\alpha,\beta,\gamma)} (\bl z,\bl w )\big]^{ \eta } \mathcal{B}^{(\alpha, \beta, \gamma)}$. In section \ref{sec 3}, it is proved that $\det \mathcal{B}^{(\alpha, \beta, \gamma, \eta)}$ is non-negative definite and it gives rise to a M\"ob-homogeneous analytic Hilbert module $\mathbb{H}^{(\alpha, \beta, \gamma, \eta)}$ consisting of holomorphic functions over $\mathbb{D}^2$.  

In the final section, it is proved that $\{\mathcal{A}^{(\lambda,\mu,\nu)} : \lambda, \mu, \nu > 0\}$, $\{\mathcal{H}^{(\alpha, \beta, \gamma)} : \alpha, \beta > \gamma > 0\}$ and $\{\mathbb{H}^{(\alpha, \beta, \gamma, \eta)} : \alpha, \beta > \gamma > 0, \eta > 0\}$ are three distinct family of M\"ob-homogeneous analytic Hilbert modules and each of them are unitarily inequivalent with the family of weighted Bergman modules over $\mathbb{D}^2$. It is also proved that none of $B_{(\alpha,\beta, \gamma)}$ is a K\"ahler-Einstein metric.

\section{Curvature criteria}  
Let $(\mathcal{H}, K)$ be a M\"ob-homogeneous analytic Hilbert module. Due to \cite[Theorem 2.1]{BDHKM}, there exists a function $J : \mbox{M\"ob} \times \mathbb{D}^2 \to \mathbb{C} \setminus \{0\}$ such that $J(\varphi, \cdot)$ is a holomorphic function for every $\varphi \in \mbox{M\"ob}$ and 
\begin{equation}\label{eqn:quasi1}
K(\boldsymbol{z}, \boldsymbol{w}) = J(\varphi, \boldsymbol{z}) K(\tilde{\varphi}(\boldsymbol{z}), \tilde{\varphi}(\boldsymbol{w})) \overline{J(\varphi, \boldsymbol{w})}    
\end{equation}
holds for every $\varphi \in \mbox{M\"ob}$, $\boldsymbol{z}, \boldsymbol{w} \in \mathbb{D}^2$. Let $\varphi_z$ be the unique involution of M\"ob mapping $z$ to $0$, that is, $\varphi_z(w) = \frac{z - w}{1 - \bar{z}w},$ $w \in \mathbb{D}$. Taking $\boldsymbol{z} = \boldsymbol{w} = (z_1, z_2)$ and then, replacing $\varphi$ by $\varphi_{z_2}$ in Equation \eqref{eqn:quasi1}, we obtain
\begin{equation}\label{eqn:quasi2}
    K(\boldsymbol{z}, \boldsymbol{z}) = J(\varphi_{z_2}, \boldsymbol{z}) K((\varphi_{z_2}(z_1), 0), (\varphi_{z_2}(z_1), 0)) \overline{J(\varphi_{z_2}, \boldsymbol{z})}.
\end{equation}
For any $k$ in the unit circle $\mathbb{T}$, suppose $\psi_k \in \mbox{M\"ob}$ is defined by $\psi_k(z) = kz,$ $z \in \mathbb{D}$. Replacing $\varphi$ by $\psi_k$ and $\boldsymbol{z}, \boldsymbol{w}$ by $\boldsymbol{0} = (0,0)$ in Equation \eqref{eqn:quasi1}, it follows that $|J(\psi_k, \boldsymbol{0})| = 1$. Furthermore, replacing $\boldsymbol{z}$ and $\boldsymbol{w}$ by $(z_1, 0) \in \mathbb{D}^2$ and $\varphi$ by $\psi_k$ for $k \in \mathbb{T}$ in Equation \eqref{eqn:quasi1}, we obtain 
\begin{equation}\label{eqn:quasi4}
    K((z_1, 0), (z_1, 0)) = J(\psi_k, (z_1, 0)) K((kz_1, 0), (kz_1, 0)) \overline{J(\psi_k, (z_1, 0))}.
\end{equation}

Conversely, assume that $(\mathcal{H}, K)$ is an analytic Hilbert module. Also, assume that there exists a projective cocycle $J : \mbox{M\"ob} \times \mathbb{D}^2 \to \mathbb{C} \setminus \{0\}$ such that $|J(\psi_k, \boldsymbol{0})| = 1$ and Equation \eqref{eqn:quasi4} holds for every $k \in \mathbb{K}$ and $z_1 \in \mathbb{D}$. Then, it follows  from \cite[Theorem 2.5]{BDHKM} that $(\mathcal{H}, K)$ is a M\"ob-homogeneous analytic Hilbert module -- equivalently, Equation \eqref{eqn:quasi1} holds for every $\varphi \in \mbox{M\"ob}$ and $\boldsymbol{z}, \boldsymbol{w} \in \mathbb{D}^2$.

Let $\boldsymbol{M} = (M_{z_1}, M_{z_2})$ be the tuple of multiplication operators by the coordinate functions on $(\mathcal{H}, K)$. Recall that an analytic Hilbert module $(\mathcal{H}, K)$ over $\mathbb{D}^2$ gives rise to a Hermitian holomorphic line bundle $E_K$ over $\mathbb{D}^2$ and the equivalence class of $E_K$ determines the unitary equivalence class of the module $(\mathcal{H}, K)$. A criterion for homogeneity of an analytic Hilbert module consisting of holomorphic functions over a bounded domain $\Omega$, with respect to a subgroup $G$ of the biholomorphic automorphism group of $\Omega$, is provided in \cite[Theorem 2.10]{BDHKM} in terms of the curvature of the associated line bundle $E_K$. In the case of M\"ob-homogeneity for an analytic Hilbert module $(\mathcal{H}, K)$ over $\mathbb{D}^2$, the criterion becomes more transparent, and is provided in the following proposition.

\begin{prop}\label{prop:curv}
    Let $(\mathcal{H}, K)$ be an analytic Hilbert module consisting of holomorphic functions over $\mathbb{D}^2$. Then $(\mathcal{H}, K)$ is M\"ob-homogeneous if and only if for every $z_1, z_2 \in \mathbb{D}$, the curvature matrix $\mathbb{K}$ of the associated line bundle $E_K$ satisfies $\mathbb{K}(\varphi_{z_2}(z_1),0)=\mathbb{K}(k\varphi_{z_2}(z_1),0)$ for every $k \in \mathbb{T}$ and
    \begin{equation}\label{eqn:quasi3}
        \mathbb{K}(z_1,z_2)= D\tilde{\varphi}_{z_2}(z_1,z_2)^t \mathbb{K}(\varphi_{z_2}(z_1),0) \overline{D\tilde{\varphi}_{z_2}(z_1,z_2)},
    \end{equation} 
    where $\varphi_{z_2}$ is the unique involution in M\"ob mapping $z_2$ to $0$, $D\tilde{\varphi}_{z_2}$ denotes the derivative of $\tilde{\varphi}_{z_2}$ and $D\tilde{\varphi}_{z_2}(z_1,z_2)^t$ is the transpose of the derivative matrix $D\tilde{\varphi}_{z_2}(z_1,z_2)$.
\end{prop}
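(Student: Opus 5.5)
The plan is to prove the two directions separately, using the quasi-invariance formula \eqref{eqn:quasi1} for necessity and the converse criterion \cite[Theorem 2.5]{BDHKM}, stated just before the proposition, for sufficiency.

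\emph{Necessity.} Suppose $(\mathcal{H},K)$ is M\"ob-homogeneous. Then \eqref{eqn:quasi1} holds with $J(\varphi,\cdot)$ holomorphic and nonvanishing.
\begin{itemize}
\item Restrict \eqref{eqn:quasi1} to the diagonal $\boldsymbol{z}=\boldsymbol{w}$ and take $\log$. This gives
\[
\log K(\boldsymbol{w},\boldsymbol{w}) = \log|J(\varphi,\boldsymbol{w})|^2 + \log K(\tilde\varphi(\boldsymbol{w}),\tilde\varphi(\boldsymbol{w})).
\]
\item The term $\log|J(\varphi,\boldsymbol{w})|^2$ is pluriharmonic, because $J(\varphi,\cdot)$ is holomorphic and nonvanishing on the simply connected $\mathbb{D}^2$. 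Hence it is annihilated by $\partial^2/\partial w_i\partial\bar w_j$.
\item The chain rule for the holomorphic map $\tilde\varphi$ then yields the transformation rule
\[
\mathbb{K}(\boldsymbol{w}) = D\tilde\varphi(\boldsymbol{w})^t\,\mathbb{K}(\tilde\varphi(\boldsymbol{w}))\,\overline{D\tilde\varphi(\boldsymbol{w})}.
\]
\item Taking $\varphi=\varphi_{z_2}$ and $\boldsymbol{w}=(z_1,z_2)$ gives \eqref{eqn:quasi3}, since $\tilde\varphi_{z_2}(z_1,z_2)=(\varphi_{z_2}(z_1),0)$.
\item Taking $\varphi=\psi_k$ and $\boldsymbol{w}=(x,0)$ gives $D\tilde\psi_k=kI$. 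Hence $\mathbb{K}(x,0)=|k|^2\mathbb{K}(kx,0)=\mathbb{K}(kx,0)$.
\item Setting $x=\varphi_{z_2}(z_1)$ gives the rotation condition.
\end{itemize}

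\emph{Sufficiency.} Assume both curvature identities hold. I would reconstruct a cocycle and invoke the converse criterion \cite[Theorem 2.5]{BDHKM} recalled above, in the form requiring only \eqref{eqn:quasi4} and $|J(\psi_k,\boldsymbol 0)|=1$. Since $\tilde\psi_k$ is linear, one can take the natural candidate cocycle to be constant on $\psi_k$, provided \eqref{eqn:quasi4} can be met.
\begin{itemize}
\item Set $F(x,y)=K((x,0),(y,0))$. This is holomorphic in $x$, antiholomorphic in $y$, and nonvanishing near the diagonal.
\item The rotation hypothesis, together with the fact that $\mathbb{K}(kx,0)$ is the pullback of $\mathbb{K}$ under $\tilde\psi_k$, gives that $\log K(\boldsymbol{w},\boldsymbol{w})-\log K(\tilde\psi_k\boldsymbol{w},\tilde\psi_k\boldsymbol{w})$ has vanishing complex Hessian. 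This holds at least along the slice $\{(x,0)\}$ and, via \eqref{eqn:quasi3}, transported to all of $\mathbb{D}^2$.
\item The difference is therefore pluriharmonic on the simply connected domain $\mathbb{D}^2$. So it equals $\log|g_k|^2$ for a holomorphic nonvanishing $g_k$.
\item Evaluating at $\boldsymbol{0}$ and normalizing the phase gives $|g_k(\boldsymbol 0)|=1$.
\item Setting $J(\psi_k,\cdot)=g_k$ then yields \eqref{eqn:quasi4} after polarization. Polarization applies because two kernels, holomorphic/antiholomorphic in the appropriate variables and agreeing on the diagonal, agree everywhere.
\end{itemize}

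To obtain the full projective cocycle $J$ demanded by that theorem, I would use \eqref{eqn:quasi3} in the same way. It shows that $\log K(\boldsymbol w,\boldsymbol w)-\log K(\tilde\varphi_{z_2}(\boldsymbol w),\tilde\varphi_{z_2}(\boldsymbol w))$ is pluriharmonic. Since every element of M\"ob is a product $\psi_k\varphi_a$, this gives holomorphic multipliers for all $\varphi$.

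The main obstacle is this last step. Equation \eqref{eqn:quasi3} is only asserted at the single point $(z_1,z_2)$ for the involution $\varphi_{z_2}$ depending on that point, whereas pluriharmonicity needs the transformation rule for a fixed $\varphi$ on an open set. I would handle this via the curvature criterion \cite[Theorem 2.10]{BDHKM}, which is designed exactly for this situation: homogeneity follows once $\mathbb{K}$ is $G$-equivariant in the sense $\mathbb{K}(\boldsymbol w)=D g(\boldsymbol w)^t\mathbb{K}(g\boldsymbol w)\overline{Dg(\boldsymbol w)}$ for all $g$. The rotation condition plus \eqref{eqn:quasi3} would be shown to imply this equivariance. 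Given $g=\tilde\varphi$ and $\boldsymbol w=(z_1,z_2)$, write $\varphi_{\varphi(z_2)}\circ\varphi\circ\varphi_{z_2}$ as a rotation $\psi_k$ fixing $0$. Then combine \eqref{eqn:quasi3} at $\boldsymbol w$ and at $\tilde\varphi(\boldsymbol w)$ with the rotation invariance on the slice $\{(x,0)\}$, and use the chain rule to assemble the derivative factors.
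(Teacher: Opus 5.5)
Your proposal is correct, and its final route is essentially the paper's. Necessity comes from the transformation rule $\mathbb{K}(\boldsymbol{z})=D\tilde\varphi(\boldsymbol{z})^{t}\,\mathbb{K}(\tilde\varphi(\boldsymbol{z}))\,\overline{D\tilde\varphi(\boldsymbol{z})}$, which the paper quotes from \cite[Proposition 2.7]{BDHKM}; you also correctly read off the rotation condition from $D\tilde\psi_k=kI$, which the paper leaves implicit. Sufficiency comes from the identity $\varphi_{\varphi(z_2)}\circ\varphi=\psi_{\tilde k}\circ\varphi_{z_2}$, which upgrades \eqref{eqn:quasi3} plus rotation-invariance on the slice to full M\"ob-equivariance of $\mathbb{K}$, exactly as in the paper's computation. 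The preliminary cocycle detour through \cite[Theorem 2.5]{BDHKM} is unnecessary, and its ``transport to all of $\mathbb{D}^2$'' step already presupposes that decomposition. The only real difference is at the last step: instead of citing \cite[Theorem 2.10]{BDHKM}, the paper passes to the pulled-back module $\mathcal{H}_\varphi$ (on which the coordinate multipliers are unitarily equivalent to $\tilde\varphi(\boldsymbol{M})$), observes that its curvature equals $\mathbb{K}$, and concludes by Cowen--Douglas rigidity.
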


\begin{proof}
Assume that $(\mathcal{H}, K)$ is an analytic Hilbert module. Then, \cite[Proposition 2.7]{BDHKM} implies that
$$\mathbb{K}(z_1, z_2) = D \tilde{\varphi}(z_1, z_2)^t \mathbb{K}(\varphi(z_1), \varphi(z_2)) \overline{D\tilde{\varphi}(z_1, z_2)}$$
holds for every $\varphi \in \mbox{M\"ob}$ and $(z_1, z_2) \in \mathbb{D}^2$. Replacing $\varphi$ by $\varphi_{z_2}$ in the equation above, we obtain Equation \eqref{eqn:quasi3}.

Conversely, assume that $\mathbb{K}(\varphi_{z_2}(z_1),0)=\mathbb{K}(k\varphi_{z_2}(z_1),0)$ and Equation \eqref{eqn:quasi3} holds for every $(z_1, z_2)$ in $\mathbb{D}^2$ and $k \in \mathbb{T}$.
Let $\varphi \in \mob$ be such that $\varphi = \psi_k \circ \varphi_{a}$,
for some $k \in \mathbb{T}$ and $a \in \mathbb{D}$, where $\psi_k \in \mbox{M\"ob}$ is defined by $\psi_k(z) = kz,$ $z \in \mathbb{D}$. A direct computation shows that $\varphi_{\varphi(z_2)} \circ \varphi = \psi_{\tilde{k}} \circ \varphi_{z_2}$, where $\tilde{k} = k \left(\frac{1 - a\bar{z}_2}{1 - \bar{a}z_2}\right) \in \mathbb{T}$. Then,
\begin{align}\label{eqn:curv2}    
\nonumber \mathbb{K}(\varphi z_1,\varphi z_2)&= D\tilde{\varphi}_{\varphi(z_2)}(\varphi(z_1),\varphi(z_2))^{t}  \mathbb{K}(\varphi_{\varphi(z_2)}(\varphi(z_1)),0) \overbar{D\tilde{\varphi}_{\varphi(z_2)}(\varphi(z_1),\varphi(z_2))}\\
\nonumber &= [D(\tilde{\varphi}_{\varphi(z_2)}\circ \tilde{\varphi})(z_1,z_2)  D(\tilde{\varphi}^{-1})(\varphi(z_1), \varphi(z_2))]^t \mathbb{K}(\tilde{k}\varphi_{z_2}(z_1),0)\\
\nonumber &\phantom{xx}\overbar{D(\tilde{\varphi}_{\varphi(z_2)}\circ \tilde{\varphi})(z_1,z_2)}  \overbar{D(\tilde{\varphi}^{-1})(\varphi(z_1), \varphi(z_2))}^{-1}\\
\nonumber &= [D\tilde{\varphi}(z_1,z_2)^{-1}]^t  D\tilde{\varphi}_{z_2}(z_1,z_2)^t D \tilde{\psi}_{\tilde{k}} (\varphi_{z_2}(z_1), 0)^t \mathbb{K}(\tilde{k}\varphi_{z_2}(z_1),0)\\
\nonumber &\phantom{xx} \overline{D \tilde{\psi}_{\tilde{k}} (\varphi_{z_2}(z_1), 0)} \overbar{D\tilde{\varphi}_{z_2}(z_1,z_2)}  \overbar{D\tilde{\varphi}(z_1,z_2)}^{-1}\\
\nonumber &=[D\tilde{\varphi}(z_1,z_2)^{-1}]^t D\tilde{\varphi}_{z_2}(z_1,z_2)^t   \mathbb{K}(\varphi_{z_2}(z_1),0) \overbar{D\tilde{\varphi}_{z_2}(z_1,z_2)}  \overbar{D\tilde{\varphi}(z_1,z_2)}^{-1}\\
&=  [D\tilde{\varphi}(z_1,z_2)^{-1}]^t \mathbb{K}(z_1,z_2) \overbar{D\tilde{\varphi}(z_1,z_2)}^{-1}.
\end{align}  
Here, the fourth equality holds because $D \tilde{\psi}_{\tilde{k}} (\varphi_{z_2}(z_1), 0) = \tilde{k} I_{2 \times 2}$, where $I_{2 \times 2}$ is the $2 \times 2$ identity matrix. 

Since $\tilde{\varphi}(\boldsymbol{M})$ is a bounded operator, it follows that $p \circ \tilde{\varphi} \in \mathcal{H}$ for every $p \in \mathbb{C}[\boldsymbol{z}]$. Therefore, the Hilbert space $\mathcal{H}_{\varphi}$ determined by the non-negative definite function $K_{\varphi} : \mathbb{D}^2 \times \mathbb{D}^2 \to \mathbb{C}$, defined by,
$$K_{\varphi}(z_1, z_2) = K(\varphi^{-1}(z_1), \varphi^{-1}(z_2)),\,\,(z_1, z_2) \in \mathbb{D}^2,$$
contains $\mathbb{C}[\boldsymbol{z}]$ as a dense subset. Also, $\tilde{\varphi}(\boldsymbol{M})$ is unitarily equivalent to the pair of multiplication operators by the coordinate functions $\boldsymbol{M}_{\varphi}$ on $\mathcal{H}_{\varphi}$. Moreover, $\mathcal{H}_{\varphi}$ is an analytic Hilbert module. A direct computation, using the chain rule, implies that the curvature matrix $\mathbb{K}_{\varphi}$ of the associated line bundle of $\mathcal{H}_{\varphi}$ satisfies 
\begin{equation}\label{eqn:curv1}
    \mathbb{K}_{\varphi}(z_1,z_2)  = D\tilde{\varphi}(z_1,z_2) \mathbb{K}(\varphi z_1,\varphi z_2) \overline{D\tilde{\varphi}(z_1,z_2)},
\end{equation}
for every $(z_1, z_2) \in \mathbb{D}^2$. Combining Equations \eqref{eqn:curv2} and \eqref{eqn:curv1}, we obtain $\mathbb{K}_{\varphi} = \mathbb{K}$. Therefore, $\tilde{\varphi}(\boldsymbol{M})$ and $\boldsymbol{M}$ are unitarily equivalent.
\end{proof}

\section{Construction of M\"ob-homogeneous analytic Hilbert modules}\label{sec 3}
In this section, we construct three families of M\"ob-homogeneous analytic Hilbert modules. The section is divided into three subsections, each of which provides the construction of a family of M\"ob-homogeneous analytic Hilbert modules. 

\subsection{The family \texorpdfstring{$\mathcal{A}^{(\lambda, \mu, \nu)}$}{alamdamunu}}
For $\lambda > 0$, recall that $\mathbb{A}^{(\lambda, \lambda)}(\mathbb{D}^2) := \mathbb{A}^{(\lambda)}(\mathbb{D}) \otimes \mathbb{A}^{(\lambda)}(\mathbb{D})$ is the reproducing kernel Hilbert space with the reproducing kernel $\boldsymbol{B}^{(\lambda, \lambda)}(\boldsymbol{z}, \boldsymbol{w}) = \frac{1}{(1 - z_1\bar{w}_1)^{\lambda}(1 - z_2 \bar{w}_2)^{\lambda}}$, $\boldsymbol{z} = (z_1, z_2),$ $\boldsymbol{w} = (w_1, w_2) \in \mathbb{D}^2$. Let $\mathbb{A}^{(\lambda)}_{\text{\tiny{sim}}}(\mathbb{D}^2)$ and $\mathbb{A}^{(\lambda)}_{\text{\tiny{anti}}}(\mathbb{D}^2)$ denote the set of all symmetric and anti-symmetric functions in $\mathbb{A}^{(\lambda, \lambda)}(\mathbb{D}^2)$, respectively. It follows from \cite[Lemma 4.4]{BGMS} that the spaces $\mathbb{A}^{(\lambda)}_{\text{\tiny{sim}}}(\mathbb{D}^2)$ and $\mathbb{A}^{(\lambda)}_{\text{\tiny{anti}}}(\mathbb{D}^2)$ are reproducing kernel Hilbert spaces over $\mathbb{D}^2$. Let $K^{(\lambda)}_{\text{{\tiny{sim}}}}$ and $K^{(\lambda)}_{\text{{\tiny{anti}}}}$ be the reproducing kernels of $\mathbb{A}^{(\lambda)}_{\text{\tiny{sim}}}(\mathbb{D}^2)$ and $\mathbb{A}^{(\lambda)}_{\text{\tiny{anti}}}(\mathbb{D}^2)$, respectively. The reproducing kernels $K^{(\lambda)}_{\text{{\tiny{sim}}}}$ and $K^{(\lambda)}_{\text{{\tiny{anti}}}}$ are given by the formula 
$$K^{(\lambda)}_{\text{{\tiny{sim}}}}(\bl z, \bl w) =
\frac{1}{2}
\left[\frac{1}{(1-z_1\overline{w}_2)^{\lambda}(1-z_2\overline{w}_2)^{\lambda}}+\frac{1}{(1-z_1\overline{w}_2)^{\lambda}(1-z_2\overline{w}_2)^{\lambda}}\right],$$

$$K^{(\lambda)}_{\text{{\tiny{anti}}}}(\bl z, \bl w) = \frac{1}{2}\left[\frac{1}{(1-z_1\overline{w}_1)^{\lambda}(1-z_2\overline{w}_2)^{\lambda}}-\frac{1}{(1-z_1\overline{w}_2)^{\lambda}(1-z_2\overline{w}_1)^{\lambda}}\right]$$
for every $\bl z = (z_1, z_2),\,\,\bl w = (w_1, w_2)\in\mathbb{D}^2$ (cf. \cite[Proposition 4.2]{BGMS},\cite[Proposition 2.2]{MRZ}). Due to \cite[Theorem 5.16]{PR}, product of two non-negative definite kernels is a non-negative definite kernel and therefore, for any $\mu, \nu > 0$, $K^{(\lambda,\mu, \nu)}: \mathbb{D}^2 \times \mathbb{D}^2 \to \mathbb{C}$, defined by 
\begin{equation}\label{eqn:1}
K^{(\lambda, \mu, \nu)}(\bl z, \bl w) = \frac{1}{(1-z_1\overline{w}_1)^{\mu}(1-z_2\overline{w}_2)^{\nu}} \left[\left(K^{(\lambda)}_{\text{{\tiny{sim}}}}(\bl z, \bl w)\right)^3 + \left(K^{(\lambda)}_{\text{{\tiny{anti}}}}(\bl z, \bl w)\right)^3\right], 
\end{equation}
$\bl z = (z_1, z_2),\,\,\bl w = (w_1, w_2)\in\mathbb{D}^2$, is a non-negative definite kernel. Substituting the values of $K^{(\lambda)}_{\text{{\tiny{sim}}}}$ and $K^{(\lambda)}_{\text{{\tiny{anti}}}}$ to the right hand side of Equation \eqref{eqn:1}, we obtain
\begin{align}\label{eqn:2}
 &K^{(\lambda, \mu, \nu)}(\bl z, \bl w) \\
\nonumber &\phantom{xxxx}= \frac{1}{4(1-z_1\overline{w}_1)^{\mu+\lambda}(1-z_2\overline{w}_2)^{\nu+\lambda}}\left[\frac{1}{(1-z_1\overline{w}_1)^{2\lambda}(1-z_2\overline{w}_2)^{2\lambda}}+\frac{3}{(1-z_1\overline{w}_2)^{2\lambda}(1-z_2\overline{w}_1)^{2\lambda}}\right], 
\end{align}
for every $\bl z = (z_1, z_2),\,\,\bl w = (w_1, w_2)\in\mathbb{D}^2$. The non-negative definite function $K^{(\lambda, \mu, \nu)}$ determines a Hilbert space $\mathcal A^{(\lambda, \mu,\nu)}$, consisting of holomorphic functions of $\mathbb{D}^2$, such that $K^{( \lambda, \mu, \nu)}$ is the reproducing kernel of $\mathcal A^{(\lambda, \mu,\nu)}$ (cf. \cite[Theorem 2.14]{PR}).

\begin{prop}\label{prop1}
The multiplication operators by the coordinate functions $M_{z_1}$ and $M_{z_2}$ on $\mathcal{A}^{(\lambda, \mu,\nu)}$ are bounded.    
\end{prop}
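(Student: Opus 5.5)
We use the standard criterion for boundedness of multiplication operators on a reproducing kernel Hilbert space: $M_{z_i}$ is bounded on $(\mathcal{H},K)$ with $\|M_{z_i}\|\le c$ if and only if $(c^2 - z_i\bar w_i)K(\boldsymbol z,\boldsymbol w)$ is non-negative definite (cf. \cite{PR}). Since $|z_i|<1$ on $\mathbb{D}^2$, it suffices to show that $(1-z_i\bar w_i)K^{(\lambda,\mu,\nu)}(\boldsymbol z,\boldsymbol w)$ is non-negative definite for $i=1,2$. That would give $\|M_{z_i}\|\le 1$.

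By Equation \eqref{eqn:2}, $K^{(\lambda,\mu,\nu)}$ is the sum of two terms. The first is
\[
\tfrac14(1-z_1\bar w_1)^{-(\mu+3\lambda)}(1-z_2\bar w_2)^{-(\nu+3\lambda)}.
\]
The second is
\[
\tfrac34(1-z_1\bar w_1)^{-(\mu+\lambda)}(1-z_2\bar w_2)^{-(\nu+\lambda)}(1-z_1\bar w_2)^{-2\lambda}(1-z_2\bar w_1)^{-2\lambda}.
\]
We treat $i=1$; the case $i=2$ is symmetric.

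Multiplying by $(1-z_1\bar w_1)$ lowers the exponent of $(1-z_1\bar w_1)$ by one in each term. The difficulty is that $\mu+\lambda-1$ or $\mu+3\lambda-1$ may be negative, and then the resulting factor is not a positive kernel. To avoid this, we first enlarge the bound: show instead that $(c^2-z_1\bar w_1)K^{(\lambda,\mu,\nu)}$ is non-negative definite for some $c\ge 1$.

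The cleanest route uses the kernels that carry no $(1-z_1\bar w_1)$ factor to spare. For $t>0$, the operator $M_z$ is bounded on $\mathbb{A}^{(t)}(\mathbb{D})$, with norm $\max(1,t^{-1/2})$. Hence there is $c_t$ with $(c_t^2 - z\bar w)(1-z\bar w)^{-t}$ non-negative definite. This is immediate from the coefficient description $(1-z\bar w)^{-t}=\sum_n \frac{(t)_n}{n!}(z\bar w)^n$, because the ratios $\frac{(t)_n/n!}{(t)_{n+1}/(n+1)!}=\frac{n+1}{t+n}$ are bounded.

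For the first term, write it as $\tfrac14(1-z_1\bar w_1)^{-(\mu+3\lambda)}$ times the positive kernel $(1-z_2\bar w_2)^{-(\nu+3\lambda)}$. Then $(c^2-z_1\bar w_1)$ times it is the product of two non-negative definite kernels, hence non-negative definite by \cite[Theorem 5.16]{PR}.

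For the second term, split off the factor $(1-z_1\bar w_1)^{-(\mu+\lambda)}$ and absorb $(c^2-z_1\bar w_1)$ into it. The remaining factor $(1-z_2\bar w_2)^{-(\nu+\lambda)}(1-z_1\bar w_2)^{-2\lambda}(1-z_2\bar w_1)^{-2\lambda}$ must also be non-negative definite. The factor $(1-z_1\bar w_2)^{-2\lambda}(1-z_2\bar w_1)^{-2\lambda}$ equals $B^{(2\lambda,2\lambda)}(\boldsymbol z,\sigma\boldsymbol w)$ with $\sigma(w_1,w_2)=(w_2,w_1)$. This is not obviously positive as a kernel in $(\boldsymbol z,\boldsymbol w)$, and this is the main obstacle.

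To get around it, we avoid splitting the second term alone. Note that
\[
\tfrac14\big[B^{(2\lambda,2\lambda)}(\boldsymbol z,\boldsymbol w)+3B^{(2\lambda,2\lambda)}(\boldsymbol z,\sigma\boldsymbol w)\big]=\big(K^{(\lambda)}_{\text{\tiny{sim}}}\big)^2\cdot\!\!\;\text{stuff}
\]
is, by the derivation of \eqref{eqn:2}, equal to $\big[(K^{(\lambda)}_{\text{\tiny{sim}}})^3+(K^{(\lambda)}_{\text{\tiny{anti}}})^3\big](1-z_1\bar w_1)^{\lambda}(1-z_2\bar w_2)^{\lambda}$. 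So we return to the factorization \eqref{eqn:1}:
\[
K^{(\lambda,\mu,\nu)}=(1-z_1\bar w_1)^{-\mu}(1-z_2\bar w_2)^{-\nu}\big[(K^{(\lambda)}_{\text{\tiny{sim}}})^3+(K^{(\lambda)}_{\text{\tiny{anti}}})^3\big].
\]
The bracket is the reproducing kernel of a closed subspace of $\mathbb{A}^{(3\lambda,3\lambda)}(\mathbb{D}^2)$, namely a sum of symmetric and antisymmetric parts of products. More robustly, each cube is a positive kernel dominated by a constant multiple of $B^{(3\lambda,3\lambda)}$.

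We therefore apply the boundedness criterion factorwise. The operator $M_{z_1}$ is bounded on $\mathbb{A}^{(\mu)}(\mathbb{D})$. It is also bounded on $\mathbb{A}^{(\lambda)}_{\text{\tiny{sim}}}(\mathbb{D}^2)\oplus\mathbb{A}^{(\lambda)}_{\text{\tiny{anti}}}(\mathbb{D}^2)$, because multiplication by $z_1$ on $\mathbb{A}^{(\lambda,\lambda)}$ followed by compression is fine. Equivalently, $(c^2-z_1\bar w_1)K^{(\lambda)}_{\text{\tiny{sim}}}$ and $(c^2-z_1\bar w_1)K^{(\lambda)}_{\text{\tiny{anti}}}$ are positive after compressing. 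Since the symmetric and antisymmetric kernels are not individually invariant under $M_{z_1}$, we instead use the function-theoretic criterion that $M_{z_1}$ is bounded on $\mathcal{H}_{K}$ if and only if $z_1\mathcal{H}_K\subseteq\mathcal{H}_K$ (closed graph theorem).

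So we show $z_1 f\in\mathcal{A}^{(\lambda,\mu,\nu)}$ for every $f\in\mathcal{A}^{(\lambda,\mu,\nu)}$, using the explicit description of $\mathcal{H}_{K_1K_2}$ and $\mathcal{H}_{K_1+K_2}$ as restrictions of tensor products and sums. Each elementary piece $\mathbb{A}^{(\cdot)}(\mathbb{D})$ is $z$-invariant, and the composition $f\mapsto f\circ\sigma$ preserves the space $\mathcal{H}_{B(\cdot,\sigma\cdot)}$. This last point, controlling the non-product term, is where the real work lies.
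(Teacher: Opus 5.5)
Your proposal does not reach a proof. The decisive step is explicitly left open (``this last point \dots\ is where the real work lies''), and the route you sketch for it cannot work. The space $\mathcal{H}_{B(\cdot,\sigma\cdot)}$ that $f\mapsto f\circ\sigma$ is supposed to preserve does not exist. The function $B^{(2\lambda,2\lambda)}(\boldsymbol z,\sigma\boldsymbol w)=(1-z_1\bar w_2)^{-2\lambda}(1-z_2\bar w_1)^{-2\lambda}$ is not merely ``not obviously'' positive; it is not non-negative definite at all. At the two points $(a,0)$ and $(0,a)$ with $0<|a|<1$, its Gram matrix is
\[
\begin{pmatrix} 1 & (1-|a|^2)^{-2\lambda}\\ (1-|a|^2)^{-2\lambda} & 1 \end{pmatrix},
\]
which has negative determinant. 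So the closed-graph argument through $\mathcal{H}_{K_1K_2}$ and $\mathcal{H}_{K_1+K_2}$, which needs every $K_i$ to be a kernel, has nothing to act on for the cross term of \eqref{eqn:2}. The detour through $\mathbb{A}^{(\lambda)}_{\text{\tiny{sim}}}(\mathbb{D}^2)\oplus\mathbb{A}^{(\lambda)}_{\text{\tiny{anti}}}(\mathbb{D}^2)$ is also beside the point: that sum is just $\mathbb{A}^{(\lambda,\lambda)}(\mathbb{D}^2)$, while \eqref{eqn:1} involves cubes. In fact, boundedness of $M_{z_1}$ cannot be extracted from the bracket $\big(K^{(\lambda)}_{\text{\tiny{sim}}}\big)^3+\big(K^{(\lambda)}_{\text{\tiny{anti}}}\big)^3$ at all, because its space contains $1$ but not $z_1$. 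That space is the sum of the space of $\big(K^{(\lambda)}_{\text{\tiny{sim}}}\big)^3$, whose functions are symmetric, and that of $\big(K^{(\lambda)}_{\text{\tiny{anti}}}\big)^3$, whose functions vanish to order $3$ at the origin since this kernel has no terms below bidegree $(3,3)$. Hence the linear part of every function in it is a multiple of $z_1+z_2$.

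The missing idea is that no invariance property of the bracket is needed. The whole multiplier estimate can be carried by the Bergman factor and passed to $K^{(\lambda,\mu,\nu)}$ by the Schur product theorem, and you already had both ingredients. Keep the factorization \eqref{eqn:1} instead of expanding into \eqref{eqn:2}:
\begin{align*}
(c^2-z_1\bar w_1)K^{(\lambda,\mu,\nu)}(\boldsymbol z,\boldsymbol w)
&=\big[(c^2-z_1\bar w_1)(1-z_1\bar w_1)^{-\mu}\big]\,(1-z_2\bar w_2)^{-\nu}\\
&\qquad\times\Big[\big(K^{(\lambda)}_{\text{\tiny{sim}}}\big)^3+\big(K^{(\lambda)}_{\text{\tiny{anti}}}\big)^3\Big](\boldsymbol z,\boldsymbol w).
\end{align*}
Take $c^2=\max(1,1/\mu)$. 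The first factor is then non-negative definite by your own coefficient-ratio computation with $t=\mu$. The second factor obviously is, and the third is because each cube is a Schur power of a reproducing kernel. Hence the left side is non-negative definite and $\|M_{z_1}\|\le\max(1,\mu^{-1/2})$; the same argument with $\nu$ in place of $\mu$ handles $M_{z_2}$. This is exactly the paper's proof, which observes that non-negative definiteness of $(a_i^2-z_i\bar w_i)\boldsymbol B^{(\mu,\nu)}$ transfers to $(a_i^2-z_i\bar w_i)K^{(\lambda,\mu,\nu)}$. You applied precisely this principle to your ``first term''. The obstacle you hit came only from splitting $K^{(\lambda,\mu,\nu)}$ into the two summands of \eqref{eqn:2}, the second of which is not a product of non-negative definite kernels.
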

\begin{proof}
Since the multiplication operators by the coordinate functions $z_i,$ $i = 1, 2$ are bounded on the weighted Bergman space $ \mathbb{A}^{(\mu,\nu)}(\mathbb{D}^2) := \mathbb{A}^{(\mu)}(\mathbb{D}) \otimes \mathbb{A}^{(\nu)}(\mathbb{D})$, there exist $a_i > 0$ such that 
$$(a_i^2 - z_i \overline{w}_i)\boldsymbol{\boldsymbol{B}}^{(\mu,\nu)}(\bl z, \bl w)$$
is non-negative definite. This implies that 
$$(a_i^2 - z_i \overline{w}_i) K^{(\lambda,\mu,\nu)}(\bl z, \bl w)$$
is non-negative definite. Therefore, for each $i = 1, 2$, the multiplication operators by the coordinate function $M_{z_i}$ is a bounded operator on $\mathcal{A}^{(\lambda, \mu, \nu)}$.
\end{proof}

\begin{prop}\label{prop3}
The set of all polynomials $\mathbb{C}[\bl z]$ is a dense subspace of  $\mathcal{A}^{(\lambda,\mu, \nu)}(\mathbb{D}^2)$.
\end{prop}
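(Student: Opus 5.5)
The plan is to use the standard criterion for polynomial density in a reproducing kernel Hilbert space on a circular domain. If the kernel has a power series expansion in $z$ and $\bar w$ that converges on $\mathbb{D}^2\times\mathbb{D}^2$, then the monomials lie in the space and their span is dense.

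\textbf{Step 1: expand the kernel.} From Equation \eqref{eqn:2}, $K^{(\lambda,\mu,\nu)}$ is a positive linear combination of two kernels:
\[
K_1=\frac{1}{(1-z_1\bar w_1)^{\mu+3\lambda}(1-z_2\bar w_2)^{\nu+3\lambda}},
\]
\[
K_2=\frac{1}{(1-z_1\bar w_1)^{\mu+\lambda}(1-z_2\bar w_2)^{\nu+\lambda}(1-z_1\bar w_2)^{2\lambda}(1-z_2\bar w_1)^{2\lambda}}.
\]
Expanding each factor $(1-x)^{-s}=\sum_n \frac{(s)_n}{n!}x^n$, whose coefficients are all positive, gives
\[
K^{(\lambda,\mu,\nu)}(\bl z,\bl w)=\sum_{\alpha,\beta\in\mathbb{Z}_+^2} c_{\alpha\beta}\,\bl z^\alpha\bar{\bl w}^\beta .
\]
This series converges absolutely and locally uniformly on $\mathbb{D}^2\times\mathbb{D}^2$. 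It is not diagonal, since $K_2$ contributes terms with $\alpha\neq\beta$.

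\textbf{Step 2: polynomials belong to the space.} I would not rely on a diagonal orthogonal structure. The kernel $K_1$ alone does have diagonal coefficients, namely those of the weighted Bergman kernel $\boldsymbol{B}^{(\mu+3\lambda,\nu+3\lambda)}$, and these are strictly positive. Since $K^{(\lambda,\mu,\nu)}-\tfrac14 K_1=\tfrac34 K_2$ is non-negative definite, the inclusion
\[
\mathbb{A}^{(\mu+3\lambda,\nu+3\lambda)}(\mathbb{D}^2)\hookrightarrow\mathcal{A}^{(\lambda,\mu,\nu)}
\]
is contractive, up to the constant $2$ (\cite[Theorem 5.1]{PR}-type inclusion of kernels: if $K-L\ge0$ then $\mathcal{H}_L\subseteq\mathcal{H}_K$ contractively). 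Every polynomial lies in the Bergman space $\mathbb{A}^{(\mu+3\lambda,\nu+3\lambda)}(\mathbb{D}^2)$, so every polynomial lies in $\mathcal{A}^{(\lambda,\mu,\nu)}$.

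\textbf{Step 3: density.} Suppose $f\in\mathcal{A}^{(\lambda,\mu,\nu)}$ is orthogonal to all monomials; I want to show $f=0$. Fix $\bl w$ and consider the partial sums
\[
S_N(\cdot,\bl w)=\sum_{|\alpha|,|\beta|\le N} c_{\alpha\beta}\,\bl z^\alpha\bar{\bl w}^\beta .
\]
I claim $S_N(\cdot,\bl w)\to K(\cdot,\bl w)$ in norm. It suffices to show that $\{S_N(\cdot,\bl w)\}$ is Cauchy in $\mathcal{A}^{(\lambda,\mu,\nu)}$.

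\textbf{Main obstacle: bounding the monomial norms.} The Cauchy property needs control of $\|\bl z^\alpha\|$ in $\mathcal{A}^{(\lambda,\mu,\nu)}$. By the contractive inclusion in Step 2, this norm is at most $2\|\bl z^\alpha\|_{\mathbb{A}^{(\mu+3\lambda,\nu+3\lambda)}}$, which grows only polynomially in $|\alpha|$. The coefficients $c_{\alpha\beta}$ also grow at most polynomially in $|\alpha|+|\beta|$. Combined with the geometric decay from $|w_i|<1$, this gives, for a fixed $\bl w$,
\[
\sum_{\alpha}\Bigl\|\sum_\beta c_{\alpha\beta}\bar{\bl w}^\beta \bl z^\alpha\Bigr\|<\infty .
\]
To make the geometric decay rigorous, the constraint $\bl z^\alpha\bar{\bl w}^\beta$ with $|\alpha|=|\beta|$ holds for every term, since each factor in $K_1$ and $K_2$ is homogeneous of bidegree $(1,1)$. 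Hence
\[
\sum_\beta |c_{\alpha\beta}\,\bar{\bl w}^\beta|\le C\,(1+|\alpha|)^{m}\,r^{|\alpha|}\quad\text{with } r=\max|w_i|<1,
\]
obtained by comparing with $K(\bl z',\bl w')$ evaluated at real points $(r',r')$ with $r<r'<1$. Thus the series converges absolutely in norm, and its limit must equal $K(\cdot,\bl w)$ because norm convergence implies pointwise convergence.

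\textbf{Conclusion.} For each $\bl w$,
\[
f(\bl w)=\langle f,K(\cdot,\bl w)\rangle=\lim_N\langle f,S_N(\cdot,\bl w)\rangle=0,
\]
so $f=0$, and $\mathbb{C}[\bl z]$ is dense in $\mathcal{A}^{(\lambda,\mu,\nu)}$.
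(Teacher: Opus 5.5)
\textbf{The gap.} Your proof rests on the claim in Step 2 that $K^{(\lambda,\mu,\nu)}-\tfrac14K_1=\tfrac34K_2$ is non-negative definite, and this is false in general.

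If $K_2$ were non-negative definite, the matrix $\big(\partial_{z_i}\bar\partial_{w_j}K_2(\bl 0,\bl 0)\big)_{i,j}$ would be the Gram matrix of the vectors $\bar\partial_{w_j}K_2(\cdot,\bl w)|_{\bl w=\bl 0}$. It would therefore be positive semidefinite. Reading off the degree-one terms of $K_2$, this matrix is
\[
\begin{pmatrix}\mu+\lambda & 2\lambda\\ 2\lambda & \nu+\lambda\end{pmatrix}.
\]
Its determinant $(\mu+\lambda)(\nu+\lambda)-4\lambda^2$ is negative, for instance, whenever $\mu=\nu<\lambda$. The cross factor $(1-z_1\bar w_2)^{-2\lambda}(1-z_2\bar w_1)^{-2\lambda}$ destroys positivity on its own; only the full combination with $\tfrac14K_1$ is positive.

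Consequently, for such parameters you have not shown either of the following:
\begin{itemize}
\item that polynomials lie in $\mathcal{A}^{(\lambda,\mu,\nu)}$;
\item that $\|\bl z^\alpha\|\le 2\|\bl z^\alpha\|_{\mathbb{A}^{(\mu+3\lambda,\nu+3\lambda)}}$.
\end{itemize}
The absolute norm convergence of $\sum_\alpha\|\sum_\beta c_{\alpha\beta}\bar{\bl w}^\beta\bl z^\alpha\|$ in Step 3 depends on exactly this bound, so it is also unsupported.

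\textbf{The repair.} The rest of your argument is sound; you only need a different comparison kernel. Write $K^{(\lambda,\mu,\nu)}=\boldsymbol{B}^{(\mu,\nu)}K_0$ with $K_0=(K^{(\lambda)}_{\text{\tiny{sim}}})^3+(K^{(\lambda)}_{\text{\tiny{anti}}})^3$.
\begin{itemize}
\item Since $K_0(\cdot,\bl 0)\equiv 1$, the constant $1$ is a unit vector in $\mathcal{H}_{K_0}$, so $K_0-1\ge 0$.
\item By the Schur product theorem, $K^{(\lambda,\mu,\nu)}-\boldsymbol{B}^{(\mu,\nu)}=\boldsymbol{B}^{(\mu,\nu)}(K_0-1)\ge 0$.
\item Hence $\mathbb{A}^{(\mu,\nu)}(\mathbb{D}^2)$ is contractively contained in $\mathcal{A}^{(\lambda,\mu,\nu)}$, and every polynomial lies in $\mathcal{A}^{(\lambda,\mu,\nu)}$.
\item The monomial norms satisfy $\|\bl z^\alpha\|^2\le \frac{\alpha_1!\,\alpha_2!}{(\mu)_{\alpha_1}(\nu)_{\alpha_2}}$, where $(x)_n=x(x+1)\cdots(x+n-1)$. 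This grows only polynomially in $|\alpha|$.
\end{itemize}
With this bound, Step 3 goes through unchanged.

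\textbf{A minor point in Step 3.} Evaluating $K$ at $(r',r')$ only gives $\sum_\beta c_{\alpha\beta}|\bar{\bl w}^\beta|\le C\,(r/r'^2)^{|\alpha|}$, so you need $\sqrt r<r'<1$, not just $r<r'<1$. Alternatively, set $z_i\bar w_j=t$ for all $i,j$; then $K$ becomes $(1-t)^{-(\mu+\nu+6\lambda)}$. This gives $\sum_{|\alpha|=n}\sum_\beta c_{\alpha\beta}=\frac{(\mu+\nu+6\lambda)_n}{n!}$, which is exactly the polynomial-times-$r^{|\alpha|}$ bound you claimed.

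\textbf{Comparison with the paper.} Once repaired, your route differs from the paper's. The paper gets $\mathbb{C}[\bl z]\subseteq\mathcal{A}^{(\lambda,\mu,\nu)}$ from boundedness of $M_{z_i}$ together with $K^{(\lambda,\mu,\nu)}(\cdot,\bl 0)=1$. It then obtains density by citing a result on how polynomial density behaves under sums and products of kernels. Your argument instead proves directly that the Taylor expansion of $K(\cdot,\bl w)$ converges in norm, and is self-contained.
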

\begin{proof}
For $i = 1, 2$, since $M_{z_i}$, the multiplication operator by the coordinate function $z_i$ on $\mathcal{A}^{(\lambda, \mu, \nu)}(\mathbb{D}^2))$, is a bounded operator and $K^{(\lambda, \mu,\nu)}(\cdot, (0,0))$ is the constant function $1$, it follows that $\mathbb{C}[z]$ is contained in $\mathcal{A}^{(\lambda, \mu, \nu)}(\mathbb{D}^2)$.

Note that the reproducing kernel Hilbert space with the reproducing kernel $\left(K^{(\lambda)}_{\text{{\tiny{sim}}}}\right)^3$ (resp. $\left(K^{(\lambda)}_{\text{{\tiny{anti}}}}\right)^3$) contains the set of all symmetric (resp. anti-symmetric) polynomials as a dense subspace (cf. \cite[Proposition 3.4]{GM}). Therefore, the reproducing kernel Hilbert space with the reproducing kernel $\left(K^{(\lambda)}_{\text{{\tiny{sim}}}}\right)^3 +\left(K^{(\lambda)}_{\text{{\tiny{anti}}}}\right)^3$ contains $\mathbb{C}[z]$ as a dense subspace. Again, an application of \cite[Proposition 3.4]{GM} yields that $\mathbb{C}[z]$ is a dense subspace of $\mathcal{A}^{(\lambda,\mu, \nu)}(\mathbb{D}^2)$.
\end{proof}

For every $\lambda, \mu, \nu > 0$, let $c^{(\lambda, \mu,\nu)} : \mbox{M\"ob} \times \mathbb{D}^2 \to \mathbb{C}$ be defined by,
\begin{equation}\label{eqn:3}
c^{(\lambda, \mu,\nu)}(\varphi, (z_1, z_2)) = \left(\varphi'(z_1)\right)^{ \frac{3\lambda +\mu}{2}} \left(\varphi'(z_2)\right)^{\frac{3\lambda +\nu}{2}},\,\,\varphi \in \mbox{M\"ob},\,\,(z_1, z_2) \in \mathbb{D}^2.
\end{equation}
A direct computation shows that the map $c^{(\lambda, \mu, \nu)}$ is a projective cocycle.

\begin{prop}\label{prop2}
Given any $\lambda, \mu, \nu > 0$, the reproducing kernel $K^{(\lambda, \mu,\nu)}$ is quasi-invariant with respect to the projective cocycle $c^{(\lambda, \mu, \nu)}$, that is, 
$$K^{(\lambda, \mu,\nu)}(\bl z, \bl w) = c^{(\lambda, \mu, \nu)}(\varphi, \bl z) K^{(\lambda, \mu,\nu)}(\varphi \bl z, \varphi \bl w) \overline{c^{(\lambda, \mu, \nu)}(\varphi, \bl w)}$$
holds for every $\varphi \in$ M\"ob and $\bl z, \bl w \in \mathbb{D}^2$.
\end{prop}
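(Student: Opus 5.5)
The plan is to verify the quasi-invariance directly from the explicit form \eqref{eqn:2}, using the single-variable identity
\begin{equation*}
1-\varphi(z)\overline{\varphi(w)} = \varphi'(z)^{1/2}\,(1-z\overline{w})\,\overline{\varphi'(w)^{1/2}},\qquad z,w\in\mathbb{D},
\end{equation*}
valid for every $\varphi\in\mbox{M\"ob}$ with a suitable choice of square root. Writing $\varphi=\psi_k\circ\varphi_a$, one checks it by the classical computation $1-\varphi_a(z)\overline{\varphi_a(w)}=\frac{(1-|a|^2)(1-z\bar w)}{(1-\bar a z)(1-a\bar w)}$ together with $\varphi_a'(z)=-\frac{1-|a|^2}{(1-\bar a z)^2}$. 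The rotation factor $k$ has modulus one and cancels between the holomorphic and antiholomorphic sides. Crucially, the identity also holds for the \emph{mixed} pairs $(z_1,w_2)$ and $(z_2,w_1)$, since it is a statement about arbitrary $z,w\in\mathbb{D}$. This is exactly why the diagonal action preserves the cross terms $(1-z_1\bar w_2)(1-z_2\bar w_1)$, whereas M\"ob $\times$ M\"ob would not.

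Next I substitute into \eqref{eqn:2}, treating its two summands separately. The first summand is
$$\frac14(1-z_1\bar w_1)^{-(3\lambda+\mu)}(1-z_2\bar w_2)^{-(3\lambda+\nu)}.$$
Under $\tilde\varphi$ it picks up the factor
$$\varphi'(z_1)^{-\frac{3\lambda+\mu}{2}}\,\varphi'(z_2)^{-\frac{3\lambda+\nu}{2}}$$
times the conjugate of the same expression in $w$. The second summand is
$$\frac34(1-z_1\bar w_1)^{-(\lambda+\mu)}(1-z_2\bar w_2)^{-(\lambda+\nu)}(1-z_1\bar w_2)^{-2\lambda}(1-z_2\bar w_1)^{-2\lambda}.$$
Under $\tilde\varphi$ the holomorphic factor attached to $z_1$ has exponent $-\frac{\lambda+\mu}{2}-\frac{2\lambda}{2}=-\frac{3\lambda+\mu}{2}$, and the one attached to $z_2$ has exponent $-\frac{3\lambda+\nu}{2}$; the same holds antiholomorphically for $w_1,w_2$. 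Both summands therefore transform with the identical factor $c^{(\lambda,\mu,\nu)}(\varphi,\bl z)^{-1}\,\overline{c^{(\lambda,\mu,\nu)}(\varphi,\bl w)}^{-1}$. Multiplying through by $c(\varphi,\bl z)\,\overline{c(\varphi,\bl w)}$ gives the claimed identity, with $c^{(\lambda,\mu,\nu)}$ as in \eqref{eqn:3}.

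The main obstacle is the bookkeeping of branches. The real powers $(1-z\bar w)^{-t}$ and $\varphi'(z)^{t/2}$ require consistent branches for the identity $(AB\bar C)^{t}=A^tB^t\overline{C}^{\,t}$ to hold on all of $\mathbb{D}^2\times\mathbb{D}^2$, not just up to a unimodular constant. I would fix principal branches for $(1-z\bar w)^{-t}$, which makes sense since $\mathrm{Re}(1-z\bar w)>0$, and define $\varphi'(z)^{s}$ via a continuous logarithm on the simply connected disc. Both sides of the resulting identity are then continuous in $(\bl z,\bl w)$ and agree up to a constant of modulus one, because they agree after taking absolute values and the ratio is continuous with values in a discrete set after fixing a branch. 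Evaluating at a convenient point, such as $\bl z=\bl w$ where both sides are positive, shows this constant is $1$. This is also precisely the sense in which $c^{(\lambda,\mu,\nu)}$ is only a projective cocycle.
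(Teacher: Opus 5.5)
Your proposal is correct and follows the paper's own argument: the paper likewise writes $\varphi(z)=t\frac{z-a}{1-\bar a z}$, uses $(1-\varphi(z_i)\overline{\varphi(w_j)})^{s}=\varphi'(z_i)^{s/2}\,\overline{\varphi'(w_j)}^{\,s/2}(1-z_i\bar w_j)^{s}$ for all pairs $i,j$ (including the mixed ones), and substitutes into \eqref{eqn:2} to read off $c^{(\lambda,\mu,\nu)}$. Your exponent bookkeeping for the two summands and your branch discussion (which the paper omits) are fine, with one small correction: the constant-ratio step should be argued for logarithms, which differ by a constant in the discrete set $2\pi i\mathbb{Z}$, because the set $\{e^{2\pi i n t}: n\in\mathbb{Z}\}$ itself is dense in $\mathbb{T}$ when $t$ is irrational.
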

\begin{proof}
Let $\varphi$ be an arbitrary element of M\"ob and $\bl z = (z_1, z_2)$, $\bl w = (w_1, w_2)$ be arbitrary elements of $\mathbb{D}^2$. Suppose $\varphi(z) = t \frac{z - a}{1 - \bar{a}z},$ $z \in \mathbb{D}$, where $t \in \mathbb{T}$ and $a \in \mathbb{D}$. A direct computation shows that 
\begin{align*}
&(1 - \varphi(z_1) \overline{\varphi(w_1)})^{\mu + \lambda} = \left(\varphi'(z_1)\right)^{\frac{\mu + \lambda}{2}} \left(\overline{\varphi'(w_1)}\right)^{\frac{\mu + \lambda}{2}} (1 - z_1 \bar{w}_1)^{\mu + \lambda} \\
&(1 - \varphi(z_2) \overline{\varphi(w_2)})^{\nu + \lambda} = \left(\varphi'(z_2)\right)^{\frac{\nu + \lambda}{2}} \left(\overline{\varphi'(w_2)}\right)^{\frac{\nu + \lambda}{2}} (1 - z_2 \bar{w}_2)^{\nu + \lambda},\\
&(1 - \varphi(z_i) \overline{\varphi(w_j)})^{2\lambda} = \left(\varphi'(z_i)\right)^{\frac{2\lambda}{2}} \left(\overline{\varphi'(w_j)}\right)^{\frac{2\lambda}{2}} (1 - z_i \bar{w}_j)^{2\lambda},\,\,i, j = 1, 2.  
\end{align*}
 Substituting the above three equations in the expression of $K(\varphi(\bl z), \varphi(\bl w))$, we observe that $K^{(\mu,\nu,\lambda)}(\bl z. \bl w) = c^{(\lambda, \mu,\nu)}(\varphi, \bl z) K^{(\lambda, \mu,\nu)}(\varphi(\bl z), \varphi(\bl w)) \overline{c^{(\lambda, \mu,\nu)}(\varphi, \bl w)}$.
\end{proof}

\begin{cor}\label{mob-homogeneity of A}
    For every $\lambda, \mu, \nu > 0$, the Hilbert space $\mathcal{A}^{(\lambda, \mu, \nu)}$ is a M\"ob-homogeneous analytic Hilbert module.
\end{cor}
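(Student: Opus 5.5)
We assemble the corollary from the three propositions just proved, together with the converse criterion from \cite[Theorem 2.5]{BDHKM} recalled in Section 2.

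First we check that $\mathcal{A}^{(\lambda,\mu,\nu)}$ is an analytic Hilbert module in the sense of the Introduction. It consists of holomorphic functions on the bounded domain $\mathbb{D}^2$, because its kernel $K^{(\lambda,\mu,\nu)}$ in Equation \eqref{eqn:2} is holomorphic in $\bl z$ and anti-holomorphic in $\bl w$. It has reproducing kernel $K^{(\lambda,\mu,\nu)}$ by construction. The coordinate multipliers are bounded by Proposition \ref{prop1}, and $\mathbb{C}[\bl z]$ is dense by Proposition \ref{prop3}.

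Next we establish M\"ob-homogeneity. Proposition \ref{prop2} gives the quasi-invariance identity \eqref{eqn:quasi1} for every $\varphi\in\mbox{M\"ob}$, with $J=c^{(\lambda,\mu,\nu)}$. This $J$ is nonvanishing, and $J(\varphi,\cdot)$ is holomorphic for each $\varphi$, since $\varphi'$ never vanishes on $\mathbb{D}$ and so powers of it are defined via the branch of logarithm on the simply connected disc. Two special cases of \eqref{eqn:quasi1} are what the converse criterion needs:
\begin{itemize}
\item Taking $\varphi=\psi_k$ gives $\psi_k'\equiv k$. Hence $|J(\psi_k,\bl 0)|=|k|^{(6\lambda+\mu+\nu)/2}=1$.
\item Equation \eqref{eqn:quasi4} is the specialization of \eqref{eqn:quasi1} to $\varphi=\psi_k$ and $\bl z=\bl w=(z_1,0)$.
\end{itemize}
With these hypotheses, \cite[Theorem 2.5]{BDHKM} yields that $\tilde\varphi(\boldsymbol M)$ is unitarily equivalent to $\boldsymbol M$ for all $\varphi$.

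An alternative, more hands-on route avoids the cited theorem. Define $U_\varphi f = c^{(\lambda,\mu,\nu)}(\varphi^{-1},\cdot)\,(f\circ\tilde\varphi^{-1})$, up to the appropriate choice of inverse and cocycle normalization. Quasi-invariance shows $U_\varphi$ maps kernel functions to kernel functions isometrically, so it extends to a unitary that intertwines $M_{z_i}$ with $\varphi(M_{z_i})$. A further alternative is Proposition \ref{prop:curv}: differentiating $\log$ of \eqref{eqn:quasi1} on the diagonal kills the $J$ terms, since $\partial\bar\partial\log|J|^2=0$. This yields the transformation rule for $\mathbb{K}$, and rotation invariance at $(\varphi_{z_2}(z_1),0)$ follows from $\varphi=\psi_k$.

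It remains to verify that the Taylor joint spectrum of $\boldsymbol M$ lies in $\overline{\mathbb{D}}^2$, as Definition \ref{def1} requires. This is the step I expect to need the most care. I would argue as follows. Each point $\bl w\in\mathbb{D}^2$ is a joint eigenvalue of $\boldsymbol M^*$ (conjugated), so $\overline{\mathbb{D}}^2$ is contained in the spectrum. Conversely, the unitary equivalence $\tilde\varphi(\boldsymbol M)\cong\boldsymbol M$ forces the spectrum to be invariant under all $\tilde\varphi$, which alone does not bound it. So I would instead show directly that $\|M_{z_i}\|\le 1$ is not needed and use the standard fact from \cite{BDHKM} that homogeneous analytic Hilbert modules over $\mathbb{D}^2$ arising from quasi-invariant kernels have spectrum $\overline{\mathbb{D}}^2$. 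Alternatively one can note that $\varphi(M_{z_i})$ is defined via the holomorphic functional calculus once $\sigma(M_{z_i})\subseteq\overline{\mathbb{D}}$. That inclusion follows because $(1-z_i\bar w_i)K^{(\lambda,\mu,\nu)}$ is non-negative definite for $i=1,2$, since $(1-z\bar w)^{-s}$ times $(1-z\bar w)$ remains positive definite for $s\ge1$. For small exponents I would instead bound the spectral radius via polynomial growth of $\|z_i^n\|$.
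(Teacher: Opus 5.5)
Your core argument is the paper's proof. Propositions~\ref{prop1} and~\ref{prop3} make $\mathcal{A}^{(\lambda,\mu,\nu)}$ an analytic Hilbert module, and the quasi-invariance of Proposition~\ref{prop2}, fed into the converse criterion \cite[Theorem 2.5]{BDHKM} recalled in Section~2, gives M\"ob-homogeneity. The paper's proof consists of exactly these two steps.

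Your last paragraph addresses the Taylor-spectrum requirement of Definition~\ref{def1}, which the paper never checks separately. As written, however, it does not settle that requirement. The contraction argument via $(1-z_i\overline{w}_i)K^{(\lambda,\mu,\nu)}$ covers only $\mu\ge 1$ (resp.\ $\nu\ge 1$), as you note. The fallback does not work: growth of $\|z_i^n\|$ bounds $\|M_{z_i}^n\|$ from below, not from above, so it cannot control the spectral radius. The appeal to an unspecified ``standard fact'' in \cite{BDHKM} is not an argument either.

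The clean fix is already contained in your hands-on route:
\begin{enumerate}
\item Quasi-invariance makes $U_\varphi f=c^{(\lambda,\mu,\nu)}(\varphi,\cdot)\,(f\circ\tilde\varphi)$ a unitary with $U_\varphi M_{z_i}U_\varphi^{*}=M_{\varphi(z_i)}$. Hence $\varphi_a(z_i)$ is a multiplier for every $a\in\mathbb{D}$.
\item Since $(1-\bar a z)^{-1}=\bigl(1-\bar a\varphi_a(z)\bigr)/(1-|a|^2)$, the function $(1-\bar a z_i)^{-1}$ is also a multiplier. So $I-\bar a M_{z_i}$ is invertible for every $a\in\mathbb{D}$, which gives $\sigma(M_{z_i})\subseteq\overline{\mathbb{D}}$.
\item The projection property of the Taylor spectrum then yields $\sigma(M_{z_1},M_{z_2})\subseteq\overline{\mathbb{D}}^2$.
\end{enumerate}
This also justifies identifying $M_{\varphi(z_i)}$ with $\varphi(M_{z_i})$ via the holomorphic functional calculus. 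Your intertwining claim tacitly uses that identification.
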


\begin{proof}
  A direct consequence of Proposition \ref{prop1} and Proposition \ref{prop3} is that the Hilbert space $\mathcal{A}^{(\lambda, \mu, \nu)}$ is an analytic Hilbert module. M\"ob-homogeneity of $\mathcal{A}^{(\lambda, \mu, \nu)}$ follows from Proposition \ref{prop2}.
\end{proof}

\begin{remark}
Let $n$ be an odd natural number. For any $\lambda, \mu, \nu > 0$, consider the non-negative definite function $K^{(\lambda, \mu, \nu, n)} : \mathbb{D}^2 \times \mathbb{D}^2 \to \mathbb{C}$, defined by
\begin{equation*}
K^{(\lambda, \mu, \nu, n)}(\bl z, \bl w) = \frac{1}{(1-z_1\overline{w}_1)^{\mu}(1-z_2\overline{w}_2)^{\nu}} \left[\left(K^{(\lambda)}_{\text{{\tiny{sim}}}}(\bl z, \bl w)\right)^n + \left(K^{(\lambda)}_{\text{{\tiny{anti}}}}(\bl z, \bl w)\right)^n\right], 
\end{equation*}
for $\bl z = (z_1, z_2),\,\,\bl w = (w_1, w_2)\in\mathbb{D}^2$. Let $\mathcal{A}^{(\lambda, \mu, \nu, n)}$ be the reproducing kernel Hilbert space determined by $K^{(\lambda, \mu, \nu, n)}$. It is easy to verify that Proposition \ref{prop1}, Proposition \ref{prop3} and Proposition \ref{prop2} hold. Therefore, the Hilbert space $\mathcal{A}^{(\lambda, \mu, \nu, n)}$ also determines a M\"ob-homogeneous analytic Hilbert module.
\end{remark}

\subsection{The family \texorpdfstring{$\mathcal{H}^{(\alpha, \beta, \gamma)}$}{alphabetagamma}}

In this subsection, we prove that the holomorphic frame $B_{(\alpha, \beta, \gamma)}$ defined by Equation \eqref{eqn:intro3} of the line bundle $E^{\gamma}_{\alpha, \beta}$, given in \cite[Theorem 4]{DM}, is a non-negative definite function on $\mathbb{D}^2 \times \mathbb{D}^2$ for any real numbers $\alpha, \beta, \gamma$ satisfying $\alpha, \beta > \gamma > 0$. Therefore, for such $\alpha,\beta, \gamma$, the function $B_{(\alpha, \beta, \gamma)}$ determines a reproducing kernel Hilbert space $\mathcal{H}^{(\alpha, \beta, \gamma)}$. We further prove that the Hilbert space $\mathcal{H}^{(\alpha, \beta, \gamma)}$ is a M\"ob-homogeneous analytic Hilbert module.

\begin{prop}\label{propB1}
    For $\gamma>0$ and $\alpha,\beta>\gamma$, the function $B_{(\alpha,\beta,\gamma)}:\mathbb{D}^2 \times \mathbb{D}^2 \to \mathbb{C}$, defined by
\begin{equation}
B_{(\alpha,\beta,\gamma)}\big((z_1,z_2),(w_1,w_2)
\big) =(1-z_1\overline{w}_1)^{-\alpha} (1-z_2\overline{w}_2)^{-\beta} (1-z_1\overline{w}_2)^{\gamma} (1-z_2\overline{w}_1)^{\gamma},
\end{equation}
$\bl z = (z_1, z_2),\,\,\bl w = (w_1, w_2)\in\mathbb{D}^2$, is non-negative definite.
\end{prop}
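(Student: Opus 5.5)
The plan is to split off a product of weighted Bergman kernels and show that the remaining factor is the exponential of a non-negative definite kernel. Set $\alpha' = \alpha-\gamma>0$ and $\beta'=\beta-\gamma>0$. Then
\begin{equation*}
B_{(\alpha,\beta,\gamma)}(\bl z,\bl w) = \boldsymbol{B}^{(\alpha',\beta')}(\bl z,\bl w)\,\big[R(\bl z,\bl w)\big]^{\gamma},
\qquad
R(\bl z,\bl w)=\frac{(1-z_1\overline{w}_2)(1-z_2\overline{w}_1)}{(1-z_1\overline{w}_1)(1-z_2\overline{w}_2)}.
\end{equation*}
The first factor $\boldsymbol{B}^{(\alpha',\beta')}$ is the reproducing kernel of $\mathbb{A}^{(\alpha')}(\mathbb{D})\otimes\mathbb{A}^{(\beta')}(\mathbb{D})$, so it is non-negative definite. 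By the Schur product theorem (\cite[Theorem 5.16]{PR}), it therefore suffices to show that $R^{\gamma}$ is non-negative definite for every $\gamma>0$.

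For this I will write $R^\gamma=\exp(\gamma L)$ with
\begin{equation*}
L(\bl z,\bl w) = -\log(1-z_1\overline{w}_1)-\log(1-z_2\overline{w}_2)+\log(1-z_1\overline{w}_2)+\log(1-z_2\overline{w}_1).
\end{equation*}
Here every $\log$ is the principal branch. This is legitimate because each quantity $1-z_i\overline{w}_j$ lies in the right half-plane, so each real power $(1-z_i\overline{w}_j)^{s}$ in the definition of $B_{(\alpha,\beta,\gamma)}$ equals $\exp(s\log(1-z_i\overline{w}_j))$. The exponents then add with no branch ambiguity, and the factorization above holds with $R^\gamma:=\exp(\gamma L)$. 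Expanding $-\log(1-x)=\sum_{n\ge1}x^n/n$ gives
\begin{equation*}
L(\bl z,\bl w)=\sum_{n\geq 1}\frac{1}{n}\big(z_1^n\overline{w}_1^n+z_2^n\overline{w}_2^n-z_1^n\overline{w}_2^n-z_2^n\overline{w}_1^n\big)=\sum_{n\geq1}\frac1n\,(z_1^n-z_2^n)\,\overline{(w_1^n-w_2^n)}.
\end{equation*}
Each term $f_n(\bl z)\overline{f_n(\bl w)}$ with $f_n(\bl z)=z_1^n-z_2^n$ is non-negative definite. The series converges absolutely and locally uniformly on $\mathbb{D}^2\times\mathbb{D}^2$, so $L$ is non-negative definite.

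Finally, $\exp(\gamma L)=\sum_{m\ge0}\gamma^m L^m/m!$. Each power $L^m$ is non-negative definite by the Schur product theorem, and $L^0=1$ is as well. A pointwise convergent series of non-negative definite kernels with non-negative coefficients is non-negative definite, because the defining inequality passes to the limit on each finite set of points. Hence $R^\gamma$ is non-negative definite, and so is $B_{(\alpha,\beta,\gamma)}=\boldsymbol{B}^{(\alpha',\beta')}R^\gamma$. The step needing the most care is the branch bookkeeping in the second paragraph, which ensures that the product of principal powers really equals $\exp$ of the sum of principal logarithms. The right-half-plane observation handles it. The rest is the Schur product theorem together with the identity that writes $L$ as a sum of rank-one kernels.
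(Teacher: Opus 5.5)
Your proof is correct and follows essentially the paper's route: factor out $\boldsymbol{B}^{(\alpha-\gamma,\beta-\gamma)}$, show that the logarithmic kernel $L$ is non-negative definite, exponentiate, and apply the Schur product theorem. The only difference is how positivity of $L$ is obtained. The paper identifies $L/\big((z_1-z_2)\overline{(w_1-w_2)}\big)$ as the reproducing kernel of the orthogonal complement in $H^2(\mathbb{D}^2)$ of the functions vanishing on the diagonal; it then works off the diagonal and extends by a closure argument. Your direct expansion $L=\sum_{n\ge1}\frac1n(z_1^n-z_2^n)\overline{(w_1^n-w_2^n)}$ is the same identity, but it makes the restriction and extension step unnecessary and handles the branch bookkeeping explicitly.
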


\begin{proof}
Let $H^2(\mathbb{D}^2)$ be the Hardy space over the bidisc $\mathbb{D}^2$ and $\Delta := \{(z,z) : z \in \mathbb{D}\}$ be the diagonal subset of $\mathbb{D}^2$. Note that $H^2(\mathbb{D}^2)$ is a reproducing kernel Hilbert space with the reproducing kernel $K : \mathbb{D}^2 \times \mathbb{D}^2 \to \mathbb{C}$, defined by
$$K\big((z_1,z_2),(w_1,w_2)\big)=\frac{1}{(1-z_1\overbar{w}_1)(1-z_2\overbar{w}_2)},\,\, (z_1, z_2),(w_1, w_2) \in \mathbb{D}^2.$$
Suppose $\mathcal{A} := \{f \in H^2(\mathbb{D}^2) : f|_{\Delta} = 0\}$ and $\mathcal{S}$ is the orthogonal complement of $\mathcal{A}$ in $H^2(\mathbb{D}^2)$. 

For $k \geq 0$, let $e_k : \mathbb{D}^2 \to \mathbb{C}$ be the function, defined by 
$$e_k(z_1, z_2) = \frac{1}{\sqrt{k + 1}} \frac{z_1^{k+1} - z_2^{k+1}}{z_1 - z_2},\,\,(z_1, z_2) \in \mathbb{D}^2.$$
It is easy to verify that $\{e_k : k \geq 0\}$ is a complete orthonormal set of $\mathcal{S}$. If $K_{\mathcal{S}}$ is the reproducing kernel of $\mathcal{S}$, then
\begin{align*}
K_{\mathcal{S}}\big((z_1, z_2), (w_1, w_2)\big)
&=\displaystyle \sum_{k=0}^{\infty}\frac{1}{k+1}e_k(z_1,z_2) \overline{e_k(w_1, w_2)} \\
&=\displaystyle \sum_{k=0}^{\infty}\frac{1}{k+1}\left(\frac{z_1^{k+1}-z_2^{k+1}}{z_1-z_2}\right) \left(\frac{\overline{w}_1^{k+1} - \overline{w}_2^{k+1}}{\overline{w}_1 - \overline{w}_2}\right)\\
% &=\frac{1}{|z_1-z_2|^2} \displaystyle \sum_{p=1}^{\infty} \frac{|z_1^p-z_2^p|^2}{p}\\
&=\frac{1}{(z_1 - z_2)(\overline{w}_1 - \overline{w}_2)}
\log\!\left[
\frac{(1 - z_1 \overline{w}_2)(1 - z_2 \overline{w}_1)}
{(1 - z_1\overline{w}_1)(1 - z_2\overline{w}_2)}
\right]
\end{align*}
Let $\Lambda = \{((z_1,z_2),(w_1,w_2))\in \mathbb{D}^2\times\mathbb{D}^2: z_1\ne z_2, w_1\ne w_2\}$ and $L : \Lambda \times \Lambda \to \mathbb{C}$ be the function defined by
$$L\big((z_1, z_2), (w_1, w_2)\big) = \log\!\left[
\frac{(1 - z_1 \overline{w}_2)(1 - z_2 \overline{w}_1)}
{(1 - z_1 \overline{w}_1)(1 - z_2 \overline{w}_2)} 
\right],\,\,(z_1, z_2), (w_1, w_2) \in \Lambda.$$
The non-negative definiteness of $K_{\mathcal{S}}$ on $\mathbb{D}^2 \times \mathbb{D}^2$ implies that the function $L$ is also a non-negative definite function on $\Lambda \times \Lambda$. Hence, for any $\gamma > 0$, the function $\tilde{L}^{(\gamma)} : \Lambda \times \Lambda \to \mathbb{C}$, defined by 
$$\tilde{L}^{(\gamma)}\big((z_1, z_2), (w_1, w_2)\big) = \left[\frac{(1 - z_1 \overline{w}_2)(1 - z_2 \overline{w}_1)}
{(1 - z_1 \overline{w}_1)(1 - z_2 \overline{w}_2)}\right]^{\gamma},\,\,(z_1, z_2), (w_1, w_2) \in \Lambda,$$ 
is non-negative definite. Since $\bar{\Lambda} = \mathbb{D}^2\times\mathbb{D}^2$, it follows from \cite[Page 362]{AR} that $\tilde{L}^{(\gamma)}$ is a non-negative definite function on $\mathbb{D}^2 \times \mathbb{D}^2$. Hence, for any $\alpha, \beta > \gamma$, $B_{(\alpha, \beta, \gamma)}$, being the Schur product of two non-negative definite functions, is a non-negative definite function on $\mathbb{D}^2 \times \mathbb{D}^2$ (cf \cite[Theorem 4.8]{PR}).
\end{proof}

Due to the Moore-Aronszajn Theorem (\cite[Theorem 2.14]{PR}), the non-negative definite function $B_{(\alpha,\beta,\gamma)}$ defines a Hilbert space $ \mathcal{H}^{(\alpha,\beta,\gamma)}$ consisting of holomorphic functions of $\mathbb{D}^2$ such that $B_{(\alpha,\beta,\gamma)}$ is the reproducing kernel of $\mathcal H^{(\alpha,\beta,\gamma)}$.

\begin{remark}\label{remB}
Let, $\alpha,\beta,\gamma$ be same as it is given in Proposition $\ref{propB1}$. For any positive number $\eta$, we have $\gamma\eta>0$ and $\alpha\eta, \beta\eta>\gamma\eta$. Furthermore, we have $[B_{(\alpha,\beta,\gamma)}]^{\eta} = B_{(\alpha  \eta, \beta \eta, \gamma \eta)}$. Hence, $[B_{(\alpha,\beta,\gamma)}]^{\eta}$ is a non-negative definite function. Thus, any positive power of $B_{(\alpha,\beta,\gamma)}$ is also a non-negative definite function and therefore, $B_{(\alpha, \beta, \gamma)}$ is an infinitely divisible kernel. Also, note that $[B_{(\alpha,\beta,\gamma)}]^{\eta}$ is not a non-negative definite function if $\eta < 0$.  Hence, the Wallach set $\mathcal{W}(B_{(\alpha, \beta, \gamma)})$ of $B_{(\alpha, \beta, \gamma)}$ is  $[0 ,\infty)$.
\end{remark}
% If $K^{-1}(\bl z, \bl w)= (1-z_1 \bar w_1)^\alpha (1-z_2 \bar w_2)^\beta (1-z_1 \bar w_2)^{-\gamma} (1-z_2 \bar w_1)^{-\gamma}$ is a non negative definite function. Then $\frac{1}{(1-z_1 \bar w_1)^\alpha}  \frac{1}{(1-z_2 \bar w_2)^\beta} K^{-1}(\bl z, \bl w)= \frac{1}{(1-z_1 \bar w_2)^\gamma (1-z_2 \bar w_1)^\gamma}$ is a non negative definite function. Let, $\hat{K}(\bl z, \bl w)=\frac{1}{(1-z_1 \bar w_2)^\gamma (1-z_2 \bar w_1)^\gamma}=\displaystyle \sum_{k,l=0}^{\infty} \frac{(\gamma)_k (\gamma)_l}{k! l!} z_1^k z_2^l \bar w_1^l \bar w_2^k$.\\
% So, $\big<\bar\partial_1^n \bar\partial_2^m \hat K_{(0,0)},\bar\partial_1^n \bar\partial_2^m \hat K_{(0,0)}\big>= \partial_1^n \partial_2^m\bar\partial_1^n \bar\partial_2^m \hat K((0,0),(0,0))$\\
% Now, $\partial_1^n \partial_2^m\bar\partial_1^n \bar\partial_2^m \hat K ((z_1,z_2),(w_1,w_2))=\sum_{k,l=0}^{\infty} \frac{(\gamma)_k (\gamma)_l}{k! l!} \partial_1^n \partial_2^m\bar\partial_1^n \bar\partial_2^m(z_1^k z_2^l \bar w_1^l \bar w_2^k) $.\\
% Therefore, $||\bar\partial_1^n \bar\partial_2^m \hat K_{(0,0)}||^2=0$ that is a contradiction.

\begin{prop}\label{propB2}
The multiplication operators by the coordinate functions $M_{z_1}$ and $M_{z_2}$ on $\mathcal H^{(\alpha,\beta,\gamma)}$ are bounded.    
\end{prop}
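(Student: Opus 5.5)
Mirroring the proof of Proposition \ref{prop1}, I will use the standard criterion: $M_{z_i}$ is bounded on a reproducing kernel Hilbert space $(\mathcal{H}, K)$ if and only if there is $a_i>0$ with $(a_i^2 - z_i\bar w_i)K(\bl z,\bl w)$ non-negative definite. Since the boundedness constant should come from a weighted Bergman factor, I will factor $B_{(\alpha,\beta,\gamma)}$ as in the proof of Proposition \ref{propB1}:
$$B_{(\alpha,\beta,\gamma)}(\bl z,\bl w) = \boldsymbol{B}^{(\alpha-\gamma,\beta-\gamma)}(\bl z,\bl w)\,\tilde{L}^{(\gamma)}(\bl z,\bl w),$$
where $\boldsymbol{B}^{(\alpha-\gamma,\beta-\gamma)}$ is the weighted Bergman kernel with positive parameters $\alpha-\gamma,\beta-\gamma$. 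The second factor $\tilde{L}^{(\gamma)}$ was shown in the proof of Proposition \ref{propB1} to be non-negative definite on all of $\mathbb{D}^2\times\mathbb{D}^2$.

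The steps are as follows.
\begin{enumerate}
\item Since $M_{z_i}$ is bounded on $\mathbb{A}^{(\alpha-\gamma,\beta-\gamma)}(\mathbb{D}^2)$, there exist $a_i>0$ such that $(a_i^2 - z_i\bar w_i)\boldsymbol{B}^{(\alpha-\gamma,\beta-\gamma)}(\bl z,\bl w)$ is non-negative definite. One can take $a_i$ large, or $a_i=1$ when the parameter is at least $1$; in general the norm of $M_z$ on $\mathbb{A}^{(\lambda)}(\mathbb{D})$ is $\max(1,\lambda^{-1/2})$.
\item Multiply this kernel by $\tilde{L}^{(\gamma)}$. By the Schur product theorem (\cite[Theorem 4.8]{PR}), the product $(a_i^2 - z_i\bar w_i)B_{(\alpha,\beta,\gamma)}(\bl z,\bl w)$ is non-negative definite.
\item Conclude that $M_{z_i}$ is bounded on $\mathcal{H}^{(\alpha,\beta,\gamma)}$ with $\|M_{z_i}\|\le a_i$, for $i=1,2$.
\end{enumerate}

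The only point requiring care is the non-negative definiteness of $\tilde{L}^{(\gamma)}$ on the whole of $\mathbb{D}^2\times\mathbb{D}^2$, not merely on $\Lambda\times\Lambda$. This extension was already obtained in Proposition \ref{propB1} by a density argument: $\tilde{L}^{(\gamma)}$ is continuous and $\Lambda$ is dense. I would cite that step rather than redo it; the remaining steps are routine.
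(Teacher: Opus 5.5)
Your proposal is correct and matches the paper's proof: both start from the boundedness of $M_{z_i}$ on $\mathbb{A}^{(\alpha-\gamma)}(\mathbb{D})\otimes\mathbb{A}^{(\beta-\gamma)}(\mathbb{D})$ and conclude that $(a_i^2 - z_i\bar w_i)B_{(\alpha,\beta,\gamma)}$ is non-negative definite. Your explicit factorization $B_{(\alpha,\beta,\gamma)} = \boldsymbol{B}^{(\alpha-\gamma,\beta-\gamma)}\tilde{L}^{(\gamma)}$, combined with the Schur product theorem, supplies the justification that the paper leaves implicit in its ``Hence''; your value $\max(1,\lambda^{-1/2})$ for the norm of $M_z$ on $\mathbb{A}^{(\lambda)}(\mathbb{D})$ is also correct.
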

\begin{proof}
    Since the multiplication operators by the coordinate functions $z_i,$ $i = 1, 2$ are bounded on the weighted Bergman space $\mathbb{A}^{(\alpha-\gamma)}(\mathbb{D})\otimes\mathbb{A}^{(\beta-\gamma)}(\mathbb{D})$, there exist $a_i > 0$ such that 
$$(a_i^2 - z_i \overline{w}_i)\boldsymbol{B}^{(\alpha - \gamma, \beta - \gamma)}\big((z_1, z_2), (w_1, w_2)\big)$$
is a non-negative definite function on $\mathbb{D}^2 \times \mathbb{D}^2$. Hence, it follows that 
$$(a_i^2 - z_i \overline{w}_i) B_{(\alpha,\beta,\gamma)}\big((z_1, z_2), (w_1, w_2)\big)$$
is a non-negative definite function on $\mathbb{D}^2 \times \mathbb{D}^2$. Therefore, for each $i = 1, 2$, the multiplication operators by the coordinate function $M_{z_i}$ is a bounded operator on $\mathcal{H}^{(\alpha,\beta,\gamma)}$.
\end{proof}

\begin{prop}\label{propB3}
The set of all polynomials $\mathbb{C}[\bl z]$ is a dense subspace of  $\mathcal H^{(\alpha,\beta,\gamma)}$.
\end{prop}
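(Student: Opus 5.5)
The plan is to follow the template of Proposition \ref{prop3}, using the product structure of $B_{(\alpha,\beta,\gamma)}$ from Proposition \ref{propB1} together with \cite[Proposition 3.4]{GM}.

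\emph{Polynomials lie in the space.} Since $B_{(\alpha,\beta,\gamma)}(\cdot,(0,0))\equiv 1$, the constant function $1$ belongs to $\mathcal{H}^{(\alpha,\beta,\gamma)}$. By Proposition \ref{propB2}, $M_{z_1}$ and $M_{z_2}$ are bounded on $\mathcal{H}^{(\alpha,\beta,\gamma)}$. Applying them repeatedly to $1$ shows that every monomial, and hence every polynomial, lies in $\mathcal{H}^{(\alpha,\beta,\gamma)}$.

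\emph{Density.} I would write
\[
B_{(\alpha,\beta,\gamma)}=\boldsymbol{B}^{(\alpha-\gamma,\beta-\gamma)}\cdot\tilde{L}^{(\gamma)}.
\]
Here $\boldsymbol{B}^{(\alpha-\gamma,\beta-\gamma)}$ is the kernel of the weighted Bergman module $\mathbb{A}^{(\alpha-\gamma)}(\mathbb{D})\otimes\mathbb{A}^{(\beta-\gamma)}(\mathbb{D})$, in which polynomials are dense and the coordinate multipliers are bounded. The factor $\tilde{L}^{(\gamma)}$ is non-negative definite on $\mathbb{D}^2\times\mathbb{D}^2$ by Proposition \ref{propB1}. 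The result quoted from \cite[Proposition 3.4]{GM} is what the paper used to pass density of polynomials from factors to a product kernel. My plan is to apply it with the Bergman factor supplying polynomial density, so that the product space $\mathcal{H}^{(\alpha,\beta,\gamma)}$ inherits it.

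\emph{Main obstacle.} The second factor $\tilde{L}^{(\gamma)}$ is not obviously a kernel in whose space polynomials are dense. Its space may not even contain the polynomials, since $\tilde{L}^{(\gamma)}$ restricted to the diagonal is identically $1$. So I must check exactly what hypotheses \cite[Proposition 3.4]{GM} needs.

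If the result requires density in both factors, I would fall back on a direct argument. The product space is the image of the restriction-to-the-diagonal map from $\mathbb{A}^{(\alpha-\gamma,\beta-\gamma)}\otimes\mathcal{H}_{\tilde{L}^{(\gamma)}}$, and it carries the quotient norm. For a function $f$ orthogonal to all polynomials, the Taylor coefficients of $f$ at $0$ are obtained by pairing $f$ with derivatives of $B_{(\alpha,\beta,\gamma)}(\cdot,\boldsymbol{w})$ at $\boldsymbol{w}=0$. I would first use the bounded operators $M_{z_i}$ and the fact that $B_{(\alpha,\beta,\gamma)}(\cdot,0)=1$ to check that these derivatives lie in the closed span of the monomials. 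It would then follow that every Taylor coefficient of $f$ vanishes, so $f=0$ and polynomials are dense.

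Concretely, I would expand $B_{(\alpha,\beta,\gamma)}(\boldsymbol{z},\boldsymbol{w})=\sum c_{m,n}(\boldsymbol{z})\,\overline{\boldsymbol{w}}^{m}\overline{\boldsymbol{w}}^{n}$, where each coefficient $c_{m,n}$ is a polynomial in $\boldsymbol{z}$. Then $\bar\partial^{(m,n)}B(\cdot,0)=m!\,n!\,c_{m,n}$ is a polynomial. For any $f$ orthogonal to polynomials, this gives $\partial^{(m,n)}f(0)=\langle f,\bar\partial^{(m,n)}B(\cdot,0)\rangle=0$ for all $m,n$, so $f\equiv 0$. This direct route avoids \cite{GM} entirely, and I would likely present it as the actual proof.
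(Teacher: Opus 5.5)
Your direct route is correct and is essentially the paper's proof. The paper also first gets $\mathbb{C}[\boldsymbol{z}]\subset\mathcal{H}^{(\alpha,\beta,\gamma)}$ from the boundedness of $M_{z_i}$ and $B_{(\alpha,\beta,\gamma)}(\cdot,(0,0))\equiv 1$, then computes $\bar\partial_1^m\bar\partial_2^n B_{(\alpha,\beta,\gamma)}(\cdot,(0,0))$ explicitly as a polynomial and notes that these vectors span a dense subspace. Your Taylor-coefficient argument for $f\perp\mathbb{C}[\boldsymbol{z}]$ just spells out why that span is dense. The tentative detour through $\boldsymbol{B}^{(\alpha-\gamma,\beta-\gamma)}\cdot\tilde{L}^{(\gamma)}$ is unnecessary, and you rightly observe that the space of $\tilde{L}^{(\gamma)}$ does not contain the polynomials. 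So present only the direct argument, and fix the typo $\overline{\boldsymbol{w}}^{m}\overline{\boldsymbol{w}}^{n}$ to $\bar w_1^{m}\bar w_2^{n}$.
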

\begin{proof}
For $i = 1, 2$, since $M_{z_i}$, the multiplication operator by the coordinate function $z_i$ on $\mathcal H^{(\alpha,\beta,\gamma)}$, is a bounded operator and $B_{(\alpha,\beta,\gamma)}(\cdot, (0,0))$ is the constant function $1$, it follows that $\mathbb{C}[z]$ is contained in $\mathcal H^{(\alpha,\beta,\gamma)}$.
% Now, $(1 - z_1 \bar{w}_1)^{-\alpha}
% = \displaystyle\sum_{m=0}^{\infty} \binom{\alpha+m-1}{m} (z_1 \bar{w}_1)^m,\,\,
% (1 - z_2 \bar{w}_2)^{-\beta}
% = \displaystyle\sum_{n=0}^{\infty}  \binom{\alpha+n-1}{n} (z_2 \bar{w}_2)^n,$ \\ and
% $(1 - z_1 \bar{w}_2)^{\gamma}
% = \displaystyle\sum_{k=0}^{\infty} \binom{\gamma}{k} (-1)^k (z_1 \bar{w}_2)^k,\,\, 
% (1 - z_2 \bar{w}_1)^{\gamma}
% = \displaystyle\sum_{p=0}^{\infty} \binom{\gamma}{p} (-1)^p (z_2 \bar{w}_1)^p$\\
Also, we have
\begin{align*}
&B_{(\alpha,\beta,\gamma)}\big((z_1,z_2),(w_1,w_2)\big) \\
&=\sum_{p,n,k,p\ge 0}
\binom{\alpha+p-1}{p}
\binom{\beta+q-1}{q} 
\binom{\gamma}{r}
\binom{\gamma}{s}
(-1)^{r+s}
z_1^{p+r} z_2^{q+s}
\bar w_1^{p+s}\bar w_2^{q+r}
\end{align*}
for every $(z_1, z_2)$ and $(w_1, w_2)$ in $\mathbb{D}^2$. Note that the set \{$\bar{\partial}_{1}^{m}\bar{\partial}_{2}^{n} B_{(\alpha,\beta,\gamma)}(\cdot, (0,0)) : m,n \geq 0$\} is dense in $\mathcal{H}^{(\alpha,\beta, \gamma)}$.
% Since, $\bar{\partial_1}^m \bar{w_1}^k=
% \begin{cases}
%     \dfrac{k!}{(k-m)!}\bar{w_1}^{k-m} & \text{if } k \geq m\\
%     0 & \text{if } k < m
% \end{cases}$
% and similarly for $\bar{\partial_2}^n$. After applying derivatives and evaluating at $(w_1,w_2)=(0,0)$ only terms survive where $p+s=m$ and $q+r=n$. Now, setting $p=m-s$ and $q=n-r$ with $0\le s \le m,\,0\le r\le n$\\
A direct computation shows that
\begin{align*}
\bar{\partial}_{1}^{m}\bar{\partial}_{2}^{n} B_{(\alpha,\beta,\gamma)}&\big(\cdot, (0,0)\big)\\ &= m!n!\sum_{r=0}^{n}\sum_{s=0}^{m} \binom{\alpha+m-s-1}{m-s} \binom{\beta+n-r-1}{n-r} \binom{\gamma}{r} \binom{\gamma}{s} (-1)^{r+s} z_1^{m+r-s} z_2^{n+s-r}
\end{align*}
Hence polynomials are dense in $\mathcal{H}^{(\alpha,\beta, \gamma)}$. 
\end{proof}

For every $\gamma > 0$ and $\alpha,\beta>\gamma$, let $J^{(\alpha,\beta,\gamma)} : \mbox{M\"ob} \times \mathbb{D}^2 \to \mathbb{C}$ be defined by,
\begin{equation*}
J^{(\alpha,\beta,\gamma)}(\varphi, (z_1, z_2)) = \left(\varphi'(z_1)\right)^{ \frac{\gamma-\alpha}{2}} \left(\varphi'(z_2)\right)^{\frac{\gamma-\beta}{2}},\,\,\varphi \in \mbox{M\"ob},\,\,(z_1, z_2) \in \mathbb{D}^2.
\end{equation*}
It is easy to verify that the map $J^{(\alpha,\beta,\gamma)}$ is a projective cocycle.

\begin{prop}\label{propB4}
For $\gamma>0$ and $\alpha,\beta>\gamma$, the reproducing kernel $B_{(\alpha,\beta,\gamma)}$ is quasi-invariant with respect to the projective cocycle $J^{(\alpha,\beta,\gamma)}$, that is, 
$$B_{(\alpha,\beta,\gamma)}(\bl z, \bl w) = J^{(\alpha,\beta,\gamma)}(\varphi, \bl z) B_{(\alpha,\beta,\gamma)}(\varphi \bl z, \varphi \bl w) \overline{J^{(\alpha,\beta,\gamma)}(\varphi, \bl w)}$$
holds for every $\varphi \in$ M\"ob and $\bl z, \bl w \in \mathbb{D}^2$.
\end{prop}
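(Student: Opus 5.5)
The plan is to use the single elementary identity for a M\"obius map $\varphi(z) = t\frac{z-a}{1-\bar a z}$, $t\in\mathbb{T}$, $a\in\mathbb{D}$:
\[
1-\varphi(z)\overline{\varphi(w)} = \frac{(1-|a|^2)(1-z\bar w)}{(1-\bar a z)(1-a\bar w)},\qquad \varphi'(z) = \frac{t(1-|a|^2)}{(1-\bar a z)^2}.
\]
Together these give $1-\varphi(z)\overline{\varphi(w)} = \varphi'(z)^{1/2}\,\overline{\varphi'(w)^{1/2}}\,(1-z\bar w)$ for a suitable branch of the square root. This is exactly the identity used in the proof of Proposition \ref{prop2}. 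The essential point is that it holds for \emph{any} pair $z,w\in\mathbb{D}$. In particular it holds for the cross pairs $(z_1,w_2)$ and $(z_2,w_1)$, because the same $\varphi$ acts on both coordinates.

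\textbf{Step 1: raise each factor to its power.} I will apply the identity to each of the four factors of $B_{(\alpha,\beta,\gamma)}(\tilde\varphi(\bl z),\tilde\varphi(\bl w))$ with the appropriate real power:
\begin{align*}
(1-\varphi(z_1)\overline{\varphi(w_1)})^{-\alpha} &= \varphi'(z_1)^{-\alpha/2}\,\overline{\varphi'(w_1)^{-\alpha/2}}\,(1-z_1\bar w_1)^{-\alpha},\\
(1-\varphi(z_2)\overline{\varphi(w_2)})^{-\beta} &= \varphi'(z_2)^{-\beta/2}\,\overline{\varphi'(w_2)^{-\beta/2}}\,(1-z_2\bar w_2)^{-\beta},\\
(1-\varphi(z_1)\overline{\varphi(w_2)})^{\gamma} &= \varphi'(z_1)^{\gamma/2}\,\overline{\varphi'(w_2)^{\gamma/2}}\,(1-z_1\bar w_2)^{\gamma},\\
(1-\varphi(z_2)\overline{\varphi(w_1)})^{\gamma} &= \varphi'(z_2)^{\gamma/2}\,\overline{\varphi'(w_1)^{\gamma/2}}\,(1-z_2\bar w_1)^{\gamma}.
\end{align*}

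\textbf{Step 2: collect the factors.} Multiplying the four lines, the holomorphic factors in $\bl z$ are $\varphi'(z_1)^{(\gamma-\alpha)/2}\varphi'(z_2)^{(\gamma-\beta)/2} = J^{(\alpha,\beta,\gamma)}(\varphi,\bl z)$. The antiholomorphic factors are likewise $\overline{J^{(\alpha,\beta,\gamma)}(\varphi,\bl w)}$. This yields
\[
B_{(\alpha,\beta,\gamma)}(\tilde\varphi\bl z,\tilde\varphi\bl w) = J^{(\alpha,\beta,\gamma)}(\varphi,\bl z)\,B_{(\alpha,\beta,\gamma)}(\bl z,\bl w)\,\overline{J^{(\alpha,\beta,\gamma)}(\varphi,\bl w)}.
\]

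\textbf{Step 3: invert.} Since $\varphi'(z)^{-1} = (\varphi^{-1})'(\varphi(z))$, the identity above is equivalent to the stated form with $\varphi^{-1}$ in place of $\varphi$. The stated form for $\varphi$ follows by replacing $\varphi$ with $\varphi^{-1}$, or equivalently by applying the same computation directly to $\varphi^{-1}$.

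\textbf{Main obstacle: branches.} The only delicate step is the choice of branches for the non-integer powers. Because $\mathbb{D}$ is simply connected and $\varphi'$ never vanishes, $\varphi'^{1/2}$ and the powers $(1-z\bar w)^{s}$ are defined through principal logarithms. The identities in Step 1 then hold up to a unimodular constant depending only on $\varphi$ (essentially a power of $t$). That constant appears as $c$ from the $\bl z$ side and $\bar c$ from the $\bl w$ side, so the total factor is $|c|^2=1$ and the constants cancel. This cancellation is precisely why $J^{(\alpha,\beta,\gamma)}$ is only a \emph{projective} cocycle. I would fix branches by normalizing at $z=w=0$ and appeal to continuity and the identity theorem to conclude.
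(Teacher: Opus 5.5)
Your Steps 1--2 are the paper's computation: the identity $1-\varphi(z)\overline{\varphi(w)}=\varphi'(z)^{1/2}\overline{\varphi'(w)^{1/2}}(1-z\bar w)$ is applied to all four factors, cross terms included, and your branch bookkeeping is fine. But what these steps prove is
\[
B_{(\alpha,\beta,\gamma)}(\tilde\varphi\bl z,\tilde\varphi\bl w)=J^{(\alpha,\beta,\gamma)}(\varphi,\bl z)\,B_{(\alpha,\beta,\gamma)}(\bl z,\bl w)\,\overline{J^{(\alpha,\beta,\gamma)}(\varphi,\bl w)},
\]
and Step 3 does not turn this into the displayed statement. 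Put $\psi=\varphi^{-1}$, $\bl z=\tilde\psi\bl u$, $\bl w=\tilde\psi\bl v$. The fact you cite, $\varphi'(\psi(u))=1/\psi'(u)$, gives $J^{(\alpha,\beta,\gamma)}(\varphi,\tilde\psi\bl u)=\psi'(u_1)^{(\alpha-\gamma)/2}\psi'(u_2)^{(\beta-\gamma)/2}=J^{(\alpha,\beta,\gamma)}(\psi,\bl u)^{-1}$, up to a unimodular constant. So inversion yields the displayed form with the \emph{reciprocal} cocycle, not with $J^{(\alpha,\beta,\gamma)}(\varphi^{-1},\cdot)$.

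The two forms cannot both hold, since together they would force $|J^{(\alpha,\beta,\gamma)}(\varphi,\cdot)|\equiv 1$. In fact the statement as printed is false. Take $\bl z=\bl w=\bl 0$ and $\varphi=\varphi_a$ with $a\neq 0$. The left side is $1$, while the right side is $|\varphi_a'(0)|^{2\gamma-\alpha-\beta}\,B_{(\alpha,\beta,\gamma)}((a,a),(a,a))=(1-|a|^2)^{2(2\gamma-\alpha-\beta)}>1$, because $\alpha+\beta>2\gamma$.

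So no argument can close this step: the exponents of $J^{(\alpha,\beta,\gamma)}$ have the wrong sign for the convention \eqref{eqn:quasi1}. Compare $c^{(\lambda,\mu,\nu)}$ in Proposition \ref{prop2}. There the same computation gives exponents $\frac{3\lambda+\mu}{2},\frac{3\lambda+\nu}{2}$, which are $-\frac12$ times the total exponents $-(3\lambda+\mu)$, $-(3\lambda+\nu)$ carried by $z_1,z_2$ in the kernel. Here those totals are $\gamma-\alpha$ and $\gamma-\beta$, so the cocycle should be $\varphi'(z_1)^{(\alpha-\gamma)/2}\varphi'(z_2)^{(\beta-\gamma)/2}$. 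With that cocycle your Steps 1--2 give the quasi-invariance at once, and no Step 3 is needed.

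The paper's proof has the same slip: its substitution produces exactly your Step 2 identity and then asserts the printed one. The M\"ob-homogeneity of $\mathcal{H}^{(\alpha,\beta,\gamma)}$ is unaffected, since quasi-invariance does hold with the corrected cocycle. Still, you should have flagged the mismatch between what your computation gave and what was asked, rather than bridging it with an invalid inversion.
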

\begin{proof}
    Let $\varphi$ be an arbitrary element of M\"ob and $\bl z = (z_1, z_2)$, $\bl w = (w_1, w_2)$ be arbitrary elements of $\mathbb{D}^2$. Suppose $\varphi(z) = t \frac{z - a}{1 - \bar{a}z},$ $z \in \mathbb{D}$, where $t \in \mathbb{T}$ and $a \in \mathbb{D}$. It is easy to verify that 
\begin{align*}
&(1 - \varphi(z_1) \overline{\varphi(w_1)})^{-\alpha} = \left(\varphi'(z_1)\right)^{\frac{-\alpha}{2}} \left(\overline{\varphi'(w_1)}\right)^{\frac{-\alpha}{2}} (1 - z_1 \bar{w}_1)^{-\alpha} \\
&(1 - \varphi(z_2) \overline{\varphi(w_2)})^{-\beta} = \left(\varphi'(z_2)\right)^{\frac{-\beta}{2}} \left(\overline{\varphi'(w_2)}\right)^{\frac{-\beta}{2}} (1 - z_2 \bar{w}_2)^{-\beta}\,\,\mbox{and}\\
&(1 - \varphi(z_i) \overline{\varphi(w_j)})^{\gamma} = \left(\varphi'(z_i)\right)^{\frac{\gamma}{2}} \left(\overline{\varphi'(w_j)}\right)^{\frac{\gamma}{2}} (1 - z_i \bar{w}_j)^{\gamma},\,\,i, j = 1, 2.
\end{align*}
 Substituting above three equations in the expression of $B_{(\alpha,\beta,\gamma)}(\varphi(\bl z), \varphi(\bl w))$, we observe that $B_{(\alpha,\beta,\gamma)}(\bl z, \bl w) = J^{(\alpha,\beta,\gamma)}(\varphi, \bl z) B_{(\alpha,\beta,\gamma)}(\varphi \bl z, \varphi \bl w) \overline{J^{(\alpha,\beta,\gamma)}(\varphi, \bl w)}$.
\end{proof}

\begin{cor}\label{mob homogeneity of H}
    For any real numbers $\alpha, \beta, \gamma$ satisfying $\alpha, \beta > \gamma > 0$, the Hilbert space $\mathcal{H}^{(\alpha, \beta, \gamma)}$ is a M\"ob-homogeneous analytic Hilbert module.
\end{cor}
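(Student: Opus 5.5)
The plan mirrors the proof of Corollary \ref{mob-homogeneity of A}. First we verify that $\mathcal{H}^{(\alpha,\beta,\gamma)}$ is an analytic Hilbert module over $\mathbb{D}^2$.

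\begin{itemize}
\item By Proposition \ref{propB1}, $B_{(\alpha,\beta,\gamma)}$ is a non-negative definite kernel. It is holomorphic in $\boldsymbol{z}$ and anti-holomorphic in $\boldsymbol{w}$. Hence $\mathcal{H}^{(\alpha,\beta,\gamma)}$ is a reproducing kernel Hilbert space of holomorphic functions on the bounded domain $\mathbb{D}^2$.
\item By Proposition \ref{propB2}, the multiplication operators $M_{z_1}, M_{z_2}$ are bounded.
\item By Proposition \ref{propB3}, $\mathbb{C}[\boldsymbol{z}]$ is dense.
\end{itemize}

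These are exactly the four requirements in the definition of an analytic Hilbert module.

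For homogeneity, we invoke Proposition \ref{propB4}. It gives
\[
B_{(\alpha,\beta,\gamma)}(\boldsymbol{z},\boldsymbol{w}) = J^{(\alpha,\beta,\gamma)}(\varphi,\boldsymbol{z})\, B_{(\alpha,\beta,\gamma)}(\tilde\varphi(\boldsymbol{z}),\tilde\varphi(\boldsymbol{w}))\, \overline{J^{(\alpha,\beta,\gamma)}(\varphi,\boldsymbol{w})}
\]
for all $\varphi\in\mob$. This is Equation \eqref{eqn:quasi1} with the projective cocycle $J^{(\alpha,\beta,\gamma)}$, which is holomorphic and nonvanishing in $\boldsymbol{z}$.

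Specializing to $\varphi=\psi_k$ shows two things. First, $J^{(\alpha,\beta,\gamma)}(\psi_k,\boldsymbol{0}) = k^{(2\gamma-\alpha-\beta)/2}$, which has modulus one. Second, Equation \eqref{eqn:quasi4} holds. So the hypotheses of the converse criterion quoted from \cite[Theorem 2.5]{BDHKM} are met, and $\boldsymbol{M}$ is M\"ob-homogeneous.

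One could instead argue directly. The operator $U_\varphi f = J^{(\alpha,\beta,\gamma)}(\varphi^{-1},\cdot)\,(f\circ\tilde\varphi^{-1})$ is unitary, because it maps kernel functions to kernel functions preserving inner products, by quasi-invariance. It also intertwines $\tilde\varphi(\boldsymbol{M})$ with $\boldsymbol{M}$.

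Alternatively, one can use Proposition \ref{prop:curv}. Quasi-invariance implies the curvature transformation rule
\[
\mathbb{K}(\boldsymbol{z}) = D\tilde\varphi(\boldsymbol{z})^t\,\mathbb{K}(\tilde\varphi(\boldsymbol{z}))\,\overline{D\tilde\varphi(\boldsymbol{z})}.
\]
This holds because $\log|J|^2$ is pluriharmonic and is therefore killed by $\partial\bar\partial$. Taking $\varphi=\varphi_{z_2}$ gives \eqref{eqn:quasi3}, and taking $\varphi=\psi_k$ gives the rotation invariance.

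We must also check that the Taylor spectrum lies in $\overline{\mathbb{D}}^2$. This follows from the Cowen--Douglas type description: $\bigcap_{i}\ker(M_{z_i}-w_i)^*$ is nonzero exactly for $\boldsymbol{w}\in\mathbb{D}^2$, together with a standard argument as in \cite{BDHKM}.

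The main obstacle is ensuring the cocycle branches are chosen consistently, so that $J^{(\alpha,\beta,\gamma)}(\varphi,\cdot)$ is a genuine holomorphic function for each $\varphi$. Since $\mathbb{D}$ is simply connected and $\varphi'$ does not vanish, a holomorphic branch of $(\varphi')^{s}$ exists. Multiplier ambiguity only affects the cocycle up to a unimodular constant, which is harmless for unitary equivalence.
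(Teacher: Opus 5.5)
Your proposal is correct and follows the paper's argument. Propositions \ref{propB1}--\ref{propB3} make $\mathcal{H}^{(\alpha,\beta,\gamma)}$ an analytic Hilbert module, and the quasi-invariance in Proposition \ref{propB4} gives M\"ob-homogeneity; your check of the $\psi_k$ conditions for \cite[Theorem 2.5]{BDHKM}, the explicit unitary $U_\varphi$ and the curvature route are just more detailed versions of the paper's ``direct consequence'' step.

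The one loose spot is your Taylor-spectrum remark: the joint eigenvalues of $\boldsymbol{M}^*$ do not by themselves control the Taylor spectrum. A cleaner argument is that for $|\lambda|>1$ the function $(z_i-\lambda)^{-1}$ is a multiplier of $\mathbb{A}^{(\alpha-\gamma,\beta-\gamma)}(\mathbb{D}^2)$, hence also of $\mathcal{H}^{(\alpha,\beta,\gamma)}$ by the Schur-product argument of Proposition \ref{propB2}. Thus $\sigma(M_{z_i})\subseteq\overline{\mathbb{D}}$, and the projection property puts the Taylor spectrum in $\overline{\mathbb{D}}^2$.
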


\begin{proof}
  Let $\alpha, \beta, \gamma$ be real numbers such that $\alpha, \beta > \gamma > 0$. Due to Proposition \ref{propB2} and Proposition \ref{propB3}, it follows that the Hilbert space $\mathcal H^{(\alpha,\beta,\gamma)}$ is an analytic Hilbert module. Finally, M\"ob-homogeneity of $\mathcal{H}^{(\alpha, \beta, \gamma)}$ is a direct consequence of Proposition \ref{propB4}. 
\end{proof}

\begin{remark}
  For any $\eta > 0$, the Hilbert space $\mathcal{H}^{(\alpha \eta, \beta \eta, \gamma \eta)}$, determined by the non-negative definite function $(B_{(\alpha, \beta, \gamma)})^{\eta} = B_{(\alpha \eta, \beta \eta, \gamma \eta)}$, is an analytic Hilbert module. Hence, the analytic Wallach set (cf. \cite[Page 5]{BDHKM}) 
  $$\mathcal{W}_{a}(B_{(\alpha, \beta, \gamma)}) := \{\eta : (B_{(\alpha, \beta, \gamma)})^{\eta}\,\,\mbox{gives rise to an analytic Hilbert module}\}$$
  of the reproducing kernel $B_{(\alpha, \beta, \gamma)}$  is $(0, \infty)$.  
\end{remark}

\subsection{The family \texorpdfstring{$\mathbb{H}^{(\alpha, \beta, \gamma, \eta)}$}{alphabetagammaeta}}

Let $\mathcal{B}^{(\alpha, \beta, \gamma)}$ denote the curvature matrix of $B_{(\alpha, \beta, \gamma)}$. Due to \cite[Corollary 2.5]{GM}, it follows that $\mathcal{B}^{(\alpha, \beta, \gamma)}$ is a non-negative definite function taking values in the set of all $2 \times 2$ complex matrices $\mathcal{M}_2(\mathbb{C})$. Therefore, for any $\eta > 0$, the function $\mathcal{B}^{(\alpha, \beta, \gamma, \eta)} : \mathbb{D}^2 \times \mathbb{D}^2 \to \mathcal{M}_2(\mathbb{C})$, defined by 

\begin{equation}\label{eqn5.1}
\mathcal{ B }^{ (\alpha,\beta,\gamma,\eta) } (\bl z,\bl w ) := \big[B_{(\alpha,\beta,\gamma)} (\bl z,\bl w )\big]^{ \eta } \left( \!\! \left( \frac{ \partial^2 }{ \partial z_i \partial \overline{ w_j } } \log B_{(\alpha,\beta,\gamma)} (\bl z,\bl w ) \right) \!\! \right)_{ i, j = 1 }^2, ~~~ \bl z,\bl w \in \mathbb{D}^2,
\end{equation} 
is a non-negative definite function.
From \cite[Lemma 2.13]{BDHKM}, it follows that $\det \mathcal{B}^{(\alpha, \beta, \gamma, \eta)}$ is a non-negative definite function and therefore, by Moore-Aronszajn Theorem $\det \mathcal{B}^{(\alpha, \beta, \gamma, \eta)}$ gives rise to a Hilbert space $\mathbb{H}^{(\alpha, \beta, \gamma, \eta)}$ consisting of holomorphic functions over $\mathbb{D}^2$. Also, $\mathbb{H}^{(\alpha, \beta, \gamma, \eta)}$ is a $\mob$-homogeneous analytic Hilbert module, thanks to \cite[Theorem 2.16]{BDHKM}. 

\begin{prop}\label{prop 4.6}
Let $\mathcal{B}^{(\alpha,\beta,\gamma)}_{i\bar{j}}$ $1 \leq i, j\leq 2$, denote the $(i, j)$th entry of the curvature matrix $\mathcal{B}^{(\alpha,\beta,\gamma)}$ of the analytic Hilbert module $\mathcal{H}^{(\alpha,\beta,\gamma)}$. Then, for any $\bl z=(z_1,z_2) \in \mathbb{D}^2$, we have
$$\mathcal{B}^{(\alpha,\beta,\gamma)}_{1\bar{1}}(\bl z)=\dfrac{\alpha}{(1-|z_1|^2)^2} \,\,\text{and}\,\,  \mathcal{B}^{(\alpha,\beta,\gamma)}_{1\bar{2}}(\bl z)= -\dfrac{\gamma}{(1-z_1\overline{z}_2)^2}$$
$$\mathcal{B}^{(\alpha,\beta,\gamma)}_{2\bar{1}}(\bl z)= -\dfrac{\gamma}{(1-z_2\overline{z}_1)^2}\,\,\text{and}\,\,  \mathcal{B}^{(\alpha,\beta,\gamma)}_{2\bar{2}}(\bl z)=\dfrac{\beta}{(1-|z_2|^2)^2}$$
\end{prop}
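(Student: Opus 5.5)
We take the logarithm of the kernel and differentiate term by term. Following the definition \eqref{eqn:intro2} of the curvature matrix, we compute the mixed derivatives $\frac{\partial^2}{\partial z_i \partial \overline{w}_j}$ of $\log B_{(\alpha,\beta,\gamma)}(\bl z, \bl w)$ and then restrict to the diagonal $\bl w = \bl z$. This is exactly the matrix appearing in \eqref{eqn5.1}. On $\mathbb{D}^2\times\mathbb{D}^2$ every factor $1 - z_i\overline{w}_j$ has positive real part, so the principal branch of the logarithm is available. The logarithm therefore splits additively as
\begin{equation*}
\log B_{(\alpha,\beta,\gamma)}(\bl z,\bl w) = -\alpha\log(1-z_1\overline{w}_1) -\beta\log(1-z_2\overline{w}_2) +\gamma\log(1-z_1\overline{w}_2) +\gamma\log(1-z_2\overline{w}_1).
\end{equation*}

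The key elementary identity is that, for indices $i,j$,
\begin{equation*}
\frac{\partial^2}{\partial z_i\partial \overline{w}_j}\log(1-z_i\overline{w}_j) = -\frac{1}{(1-z_i\overline{w}_j)^2}.
\end{equation*}
To see this, note that $\partial_{\overline{w}_j}$ gives $-z_i/(1-z_i\overline{w}_j)$. Applying $\partial_{z_i}$ then gives $-\big[(1-z_i\overline{w}_j)+z_i\overline{w}_j\big]/(1-z_i\overline{w}_j)^2$, which simplifies to $-1/(1-z_i\overline{w}_j)^2$.

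A mixed derivative $\partial_{z_i}\partial_{\overline{w}_j}$ annihilates every term that does not depend on both $z_i$ and $\overline{w}_j$. Hence each entry receives a contribution from exactly one of the four terms:
\begin{itemize}
\item For $(1,\bar 1)$, only $-\alpha\log(1-z_1\overline{w}_1)$ survives, giving $\alpha/(1-z_1\overline{w}_1)^2$.
\item For $(2,\bar 2)$, only the $\beta$-term survives, giving $\beta/(1-z_2\overline{w}_2)^2$.
\item For $(1,\bar 2)$, only $\gamma\log(1-z_1\overline{w}_2)$ survives, giving $-\gamma/(1-z_1\overline{w}_2)^2$.
\item For $(2,\bar 1)$, only $\gamma\log(1-z_2\overline{w}_1)$ survives, giving $-\gamma/(1-z_2\overline{w}_1)^2$.
\end{itemize}
Setting $\bl w=\bl z$ yields the four stated formulas, with $1-z_i\overline{z}_i = 1-|z_i|^2$ on the diagonal.

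No step is a genuine obstacle. The only points to watch are two.
\begin{itemize}
\item The branch of the logarithm must make the additive decomposition valid. This holds because each factor is nonvanishing with positive real part on the simply connected domain $\mathbb{D}^2\times\mathbb{D}^2$, and $B_{(\alpha,\beta,\gamma)}$ itself is defined through these principal-branch powers.
\item The entries must be indexed consistently with \eqref{eqn:intro2}, where the $(i,j)$ entry carries $\partial_{z_i}\partial_{\overline{w}_j}$. This ensures the off-diagonal entries come out as $1-z_1\overline{z}_2$ in position $(1,\bar 2)$ and its conjugate pattern $1-z_2\overline{z}_1$ in position $(2,\bar 1)$.
\end{itemize}
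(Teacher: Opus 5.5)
Your proposal is correct and follows the same route as the paper: you split $\log B_{(\alpha,\beta,\gamma)}$ into its four logarithmic terms, use $\partial_{z_i}\partial_{\overline{w}_j}\log(1-z_i\overline{w}_j)=-(1-z_i\overline{w}_j)^{-2}$, and observe that each entry picks up exactly one term. The only cosmetic difference is that you differentiate in $(\bl z,\bl w)$ before restricting to the diagonal, whereas the paper differentiates $\log B_{(\alpha,\beta,\gamma)}(\bl z,\bl z)$ directly as in \eqref{eqn:intro2}; the two agree because each summand is holomorphic in $\bl z$ and anti-holomorphic in $\bl w$.
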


\begin{proof}
For $1 \leq i, j \leq 2$, $\bl z = (z_1, z_2) \in \mathbb{D}^2$, we have
\begin{align*}
\mathcal{B}_{i \bar{j}}^{(\alpha,\beta,\gamma)}(\bl z)
%&= \frac{\partial^2}{\partial z_i \partial \bar{z}_j} \log B^{(\alpha,\beta,\gamma)}(\boldsymbol{z}, \boldsymbol{z})\\
&=-\alpha\frac{\partial^2}{\partial z_1 \partial \overline{z}_1} \log(1-z_1\overline{z}_1)-\beta \frac{\partial^2}{\partial z_2 \partial \overline{z}_2} \log(1-z_2\overline{z}_2)\\ 
& \phantom{xx}+\gamma \frac{\partial^2}{\partial z_1 \partial \overline{z}_2} \log(1-z_1\overline{z}_2)+\gamma \frac{\partial^2}{\partial z_2 \partial \overline{z}_1} \log(1-z_2\overline{z}_1)
\end{align*}
Note that $\frac{\partial^2}{\partial z_i \partial \overline{z}_j}\log(1-z_i\overline{z}_j)=\frac{-1}{(1-z_i\overline{z}_j)^2}$. Then, substituting the values of $\frac{\partial^2}{\partial z_i \partial \overline{z}_j}\log(1-z_i\overline{z}_j)$ in the expression of $\mathcal{B}_{i \bar{j}}^{(\alpha,\beta,\gamma)}(\bl z)$, we obtain the expression of $\mathcal{B}_{i \bar{j}}^{(\alpha,\beta,\gamma)}(z_1, z_2)$ as stated in the statement of this proposition.
\end{proof}

It follows from Proposition \ref{prop 4.6} that
\begin{align}\label{eqn:dettran1}
\nonumber \det \mathcal{ B }^{ (\alpha,\beta,\gamma,\eta) } (\bl z, \bl w) = &\,  (1-z_1\overline{w}_1)^{-2\alpha\eta} (1-z_2\overline{w}_2)^{-2\beta\eta} (1-z_1\overline{w}_2)^{2\gamma\eta} (1-z_2\overline{w}_1)^{2\gamma\eta}  \\ &\times \left[\frac{\alpha\beta}{(1-z_1\overline{w}_1)^2(1-z_2\overline{w}_2)^2}-\frac{\gamma^2}{(1-z_1\overline {w}_2)^2 (1-z_2\overline w_1)^2}\right], 
\end{align}
for $\boldsymbol{z} = (z_1, z_2)$ and $\boldsymbol{w} = (w_1, w_2)$ in $\mathbb{D}^2$.

\section{Inequivalence}
In this section, we first prove that the analytic Hilbert modules in each of the families $\{\mathcal{A}^{(\lambda, \mu, \nu)} : \lambda, \mu, \nu > 0\}$ and $\{\mathcal{H}^{(\alpha, \beta, \gamma)} : \alpha, \beta > \gamma > 0\}$ are mutually unitarily inequivalent. We also prove that the unitary equivalence class of each of the Hilbert modules in the family $\{\mathbb{H}^{(\alpha, \beta, \gamma, \eta)} : \alpha, \beta > \gamma > 0, \eta > 0\}$ depends on the ratio $\frac{\alpha}{\beta}$, $\frac{\beta}{\gamma}$ and $\gamma \eta$. Moreover, we prove that the families are pairwise distinct up to unitary equivalence, and no module in any of these families is unitarily equivalent to a weighted Bergman module over the bidisc.

% Therefore, $\mathcal{A}^{(\lambda, \mu, \nu)}$ gives rise to an Hermitian holomorphic line bundle $E_{(\lambda, \mu, \nu)}$ such that the Hermitian structure of $E_{(\lambda, \mu, \nu)}$ is given by $K^{(\lambda, \mu, \nu)}(\bl w, \bl w)$ and the unitary equivalence class of the analytic Hilbert module $\mathcal{A}^{(\lambda, \mu, \nu)}$ is determined by the curvature of $E_{(\lambda, \mu, \nu)}$. 

\begin{prop}\label{inequiv:prop4}
Let $\mathcal{K}^{(\lambda, \mu, \nu)}_{i\bar{j}}$ $1 \leq i, j\leq 2$, denote the $(i, j)$th entry of the curvature matrix $\mathbb{K}^{(\lambda, \mu, \nu)}$ of the analytic Hilbert module $\mathcal{A}^{(\lambda, \mu,\nu)}$. Then, for any $z \in \mathbb{D}$, we have
\begin{align*}
   &\mathcal{K}_{1\bar{1}}^{(\lambda,\mu,\nu)}(z,0)=\frac{\mu+\lambda}{(1-|z|^2)^2}+\frac{2\lambda}{(1-|z|^2)^2(1+3(1-|z|^2)^{2\lambda})}\left[1+6\lambda |z|^2\frac{(1-|z|^2)^{2\lambda}}{(1+3(1-|z|^2)^{2\lambda})}\right],\\
   &\mathcal{K}_{2\bar{1}}^{(\lambda,\mu,\nu)}(z,0) = \mathcal{K}_{1\bar{2}}^{(\lambda,\mu,\nu)}(z,0)=\frac{6\lambda(1-|z|^2)^{2\lambda}}{(1+3(1-|z|^2)^{2\lambda})} \left[1-\frac{2\lambda|z|^2}{(1-|z|^2)(1+3(1-|z|^2)^{2\lambda})} \right]\,\,\mbox{and}\\
   &\mathcal{K}_{2\bar{2}}^{(\lambda,\mu,\nu)}(z,0)=\nu+\lambda+\frac{2\lambda}{(1+3(1-|z|^2)^{2\lambda})}\left[1+6\lambda |z|^2\frac{(1-|z|^2)^{2\lambda}}{(1+3(1-|z|^2)^{2\lambda})}\right]
\end{align*}
\end{prop}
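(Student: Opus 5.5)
The plan is a direct computation of $\partial_i\bar\partial_j \log K^{(\lambda,\mu,\nu)}(\bl z,\bl z)$, evaluated at $\bl z=(z,0)$ only after differentiating. By Equation \eqref{eqn:2}, on the diagonal we have
$$\log K^{(\lambda,\mu,\nu)}(\bl z,\bl z) = -\log 4 -(\mu+\lambda)\log(1-|z_1|^2) -(\nu+\lambda)\log(1-|z_2|^2) + \log F(\bl z),$$
where
$$F(\bl z) = (1-|z_1|^2)^{-2\lambda}(1-|z_2|^2)^{-2\lambda} + 3\,|1-z_1\bar z_2|^{-4\lambda},$$
using $(1-z_2\bar z_1) = \overline{(1-z_1\bar z_2)}$. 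The first two logarithmic terms give the Bergman-type contributions. The $(1,1)$ entry receives $\frac{\mu+\lambda}{(1-|z_1|^2)^2}$ and the $(2,2)$ entry receives $\frac{\nu+\lambda}{(1-|z_2|^2)^2}$, which equals $\nu+\lambda$ at $z_2=0$. The off-diagonal entries receive nothing from these terms. So all the remaining work is in the term $\log F$.

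For $\log F$ I would write $F=P+3Q$ with $P=(1-|z_1|^2)^{-2\lambda}(1-|z_2|^2)^{-2\lambda}$ and $Q=(1-z_1\bar z_2)^{-2\lambda}(1-z_2\bar z_1)^{-2\lambda}$, and use
$$\partial_i\bar\partial_j\log F = \frac{\partial_i\bar\partial_j F}{F} - \frac{\partial_i F\,\bar\partial_j F}{F^2}.$$
I would compute the first and second derivatives of $P$ and $Q$ separately, since each is a product of simple factors, and then set $z_1=z$, $z_2=0$. At that point $P=(1-|z|^2)^{-2\lambda}$ and $Q=1$. The useful normalization is to multiply numerator and denominator by $(1-|z|^2)^{2\lambda}$, which gives $F\cdot(1-|z|^2)^{2\lambda}=1+3(1-|z|^2)^{2\lambda}$. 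This produces the recurring denominator $1+3(1-|z|^2)^{2\lambda}$ in the statement.

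The derivatives needed at $(z,0)$ are as follows.
\begin{itemize}
\item First derivatives of $P$: $\partial_1 P = 2\lambda\bar z_1 P/(1-|z_1|^2)$ and $\partial_2 P=0$.
\item First derivatives of $Q$: $\partial_1 Q = 2\lambda\bar z_2 Q/(1-z_1\bar z_2)=0$ and $\partial_2 Q = 2\lambda \bar z_1 Q/(1-z_2\bar z_1)=2\lambda\bar z$.
\item Second derivatives: $\partial_1\bar\partial_1 Q=0$ and $\partial_2\bar\partial_2 Q = 2\lambda + 4\lambda^2|z|^2$. The mixed term $\partial_1\bar\partial_2 Q$ equals $2\lambda$ plus terms involving $z_2$; I would track this carefully.
\item For $P$, the entries $\partial_i\bar\partial_j P$ with $i\ne j$ vanish at $z_2=0$, and the $(1,1)$ entry is the usual $\partial_1\bar\partial_1$ of a power of $(1-|z_1|^2)^{-1}$.
\end{itemize}
Assembling these in the quotient formula and simplifying should give the bracketed expressions $1+6\lambda|z|^2\frac{(1-|z|^2)^{2\lambda}}{1+3(1-|z|^2)^{2\lambda}}$ and $1-\frac{2\lambda|z|^2}{(1-|z|^2)(1+3(1-|z|^2)^{2\lambda})}$. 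The equality of $\mathcal{K}_{1\bar 2}$ and $\mathcal{K}_{2\bar1}$ at $(z,0)$ should follow from Hermitian symmetry together with the fact that the computed value is real.

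The main obstacle is the algebraic simplification, especially the off-diagonal entry. Here the cross term $-\partial_1F\,\bar\partial_2F/F^2$ mixes the $P$-derivative in $z_1$ with the $Q$-derivative in $\bar z_2$. The $z$-dependence then has to be collapsed into the single stated factor, which is where the $\frac{2\lambda|z|^2}{1-|z|^2}$ term comes from. I would organize the computation by factoring $\frac{6\lambda(1-|z|^2)^{2\lambda}}{1+3(1-|z|^2)^{2\lambda}}$ out of every term. Before trusting the final closed forms, I would check the case $z=0$: the $(1,1)$ and $(2,2)$ entries should reduce to $\mu+\lambda+\lambda/2$ and $\nu+\lambda+\lambda/2$, and the off-diagonal entry should reduce to $3\lambda/2$.
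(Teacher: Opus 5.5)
Your route is the same as the paper's: a direct computation of $\partial_i\bar\partial_j\log K^{(\lambda,\mu,\nu)}$ on the diagonal, evaluated at $(z,0)$, using the quotient formula for $\partial_i\bar\partial_j\log F$. The only difference is the splitting. The paper pulls out $G$, which already contains $Q$, and differentiates $\log\bigl(1+P/(3Q)\bigr)$; you keep $\log(P+3Q)$.

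The statement is correct, but one entry of your derivative list is wrong and would spoil $\mathcal{K}_{2\bar 2}$. At $(z,0)$ one has $\partial_2\bar\partial_2 Q=4\lambda^2|z|^2$, not $2\lambda+4\lambda^2|z|^2$. The reason is that $\bar\partial_2$ only hits $(1-z_1\bar z_2)^{-2\lambda}$ and $\partial_2$ only hits $(1-z_2\bar z_1)^{-2\lambda}$. Hence $\partial_2\bar\partial_2 Q=4\lambda^2|z_1|^2|1-z_1\bar z_2|^{-4\lambda-2}$ identically; equivalently, $\partial_2\bar\partial_2\log Q\equiv 0$. The $2\lambda$ belongs instead to $\partial_2\bar\partial_2 P(z,0)=2\lambda(1-|z|^2)^{-2\lambda}$, which is missing from your list. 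If you combine your value with the correct $P$-term, you get a spurious extra $6\lambda/F=\frac{6\lambda(1-|z|^2)^{2\lambda}}{1+3(1-|z|^2)^{2\lambda}}$ in $\mathcal{K}_{2\bar 2}(z,0)$. That gives $\nu+3\lambda$ at $z=0$ instead of $\nu+\frac{3\lambda}{2}$, so your planned check at $z=0$ would catch it.

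With that corrected, the computation closes. It is cleanest done with weights rather than by expanding $F\,\partial\bar\partial F-\partial F\,\bar\partial F$. At $(z,0)$ put $s=1-|z|^2$, $D=1+3s^{2\lambda}$, $p=P/F=1/D$ and $q=3Q/F=3s^{2\lambda}/D$. Then
\[
\partial_i\bar\partial_j\log(P+3Q)=p\,\partial_i\bar\partial_j\log P+q\,\partial_i\bar\partial_j\log Q+pq\,v_i\overline{v_j},\qquad v=\partial(\log P-\log Q)=\Bigl(\frac{2\lambda\bar z}{s},\,-2\lambda\bar z\Bigr),
\]
where $\partial\bar\partial\log P=\mathrm{diag}(2\lambda/s^2,\,2\lambda)$ and $\partial\bar\partial\log Q$ has zero diagonal and off-diagonal entries $2\lambda$. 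Reading off the entries gives
\[
\frac{2\lambda}{s^2D}+\frac{12\lambda^2|z|^2s^{2\lambda}}{s^2D^2},\qquad \frac{6\lambda s^{2\lambda}}{D}-\frac{12\lambda^2|z|^2s^{2\lambda}}{sD^2},\qquad \frac{2\lambda}{D}+\frac{12\lambda^2|z|^2s^{2\lambda}}{D^2}
\]
for the $(1,1)$, off-diagonal and $(2,2)$ entries. These are exactly the stated entries once the Bergman contributions $\frac{\mu+\lambda}{s^2}$ and $\nu+\lambda$ are added.
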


\begin{proof}
We have, 
\begin{align*}
&K^{(\lambda, \mu, \nu)}(\bl z, \bl w) \\
\nonumber &\phantom{xxxx}= \frac{1}{4(1-z_1\overbar{w}_1)^{\mu+\lambda}(1-z_2\overbar{w}_2)^{\nu+\lambda}}\left[\frac{1}{(1-z_1\overbar{w}_1)^{2\lambda}(1-z_2\overbar{w}_2)^{2\lambda}}+\frac{3}{(1-z_1\overbar{w}_2)^{2\lambda}(1-z_2\overbar{w}_1)^{2\lambda}}\right]\\
\nonumber &\phantom{xxxx}= \frac{1}{4(1-z_1\overbar{w}_1)^{\mu+\lambda}(1-z_2\overbar{w}_2)^{\nu+\lambda}}\times \frac{3}{(1-z_1\overbar{w}_2)^{2\lambda}(1-z_2\overbar{w}_1)^{2\lambda}}\\
& \phantom{xxxxxx} \times\left[1+\frac{(1-z_1\overbar{w}_2)^{2\lambda}(1-z_2\overbar{w}_1)^{2\lambda}}{3(1-z_1\overbar{w}_1)^{2\lambda}(1-z_2\overbar{w}_2)^{2\lambda}}\right], 
\end{align*}
For $1 \leq i, j \leq 2, \bl z \in \mathbb{D}^2$, note that

\begin{align*}
 \mathcal{K}_{i \bar{j}}^{(\lambda, \mu, \nu)}(\bl z) &= \frac{\partial^2}{\partial z_i \partial \bar{z}_j} \log K^{(\lambda, \mu, \nu)}(\boldsymbol{z}, \boldsymbol{z}) \\
 &=  \frac{\partial^2}{\partial z_i \partial \bar{z}_j} \log G(\boldsymbol{z}, \boldsymbol{z})+\frac{\partial^2}{\partial z_i \partial \bar{z}_j} \log\left[1+\frac{(1-z_1\overbar{z}_2)^{2\lambda}(1-z_2\overbar{z}_1)^{2\lambda}}{3(1-z_1\overbar{z}_1)^{2\lambda}(1-z_2\overbar{z}_2)^{2\lambda}}\right].
\end{align*}
where $G(\bl z, \bl z)=\dfrac{3}{4(1-z_1\overbar{z}_1)^{\mu+\lambda}(1-z_2\overbar{z}_2)^{\nu+\lambda}(1-z_1\overbar{z}_2)^{2\lambda}(1-z_2\overbar{z}_1)^{2\lambda}} $.\\
Replacing $\bl z$ by $(z, 0),$ $z \in \mathbb{D},$  we observe that that 
\begin{align*}
   & \frac{\partial^2}{\partial z_1 \partial \bar{z}_1}\log G((z,0),(z,0)) = \phantom{x}\frac{\mu+\lambda}{(1-|z|^2)^2},\\
    &\frac{\partial^2}{\partial z_1 \partial \bar{z}_2}\log G((z,0),(z,0))= 2 \lambda,\\
    &\frac{\partial^2}{\partial z_2 \partial \bar{z}_1}\log G((z,0),(z,0))= 2\lambda,\\
    &\frac{\partial^2}{\partial z_2 \partial \bar{z}_2}\log G(z,0),(z,0))=\nu+\lambda
\end{align*}
Consider, $F(z_1,z_2)=\left[1+\dfrac{(1-z_1\overbar{z}_2)^{2\lambda}(1-z_2\overbar{z}_1)^{2\lambda}}{3(1-z_1\overbar{z}_1)^{2\lambda}(1-z_2\overbar{z}_2)^{2\lambda}}\right]$. Therefore, 
\begin{align*}
    \frac{\partial^2}{\partial z_i \partial \bar{z}_j} \log F(z_1,z_2)= \dfrac{F(z_1,z_2) \frac{\partial^2}{\partial z_i \partial \bar{z}_j}F(z_1,z_2)-\frac{\partial}{\partial z_i} F(z_1,z_2)\frac{\partial}{\partial \bar z_j}F(z_1,z_2)}{F(z_1,z_2)^2}
\end{align*}

For an arbitrary $z \in \mathbb{D}$, a direct computation shows that
\begin{align*}
& F(z,0) = \frac{1+3(1-|z|^2)^{2\lambda}}{3(1-|z|^2)^{2\lambda}}\\
&\frac{\partial}{\partial z_1} F(z,0) = \dfrac{2\lambda \bar z}{3(1-|z|^2)^{2\lambda+1}}\\
&\frac{\partial}{\partial z_2} F(z,0) = \dfrac{-2\lambda \bar z}{3(1-|z|^2)^{2\lambda}}\\
&\frac{\partial}{\partial \bar{z}_1}F(z,0) =\dfrac{2\lambda  z}{3(1-|z|^2)^{2\lambda+1}}\\
& \frac{\partial}{\partial \bar{z}_2} F(z,0) = \dfrac{-2\lambda  z}{3(1-|z|^2)^{2\lambda}}\\
&\frac{\partial^2}{\partial z_1 \partial \bar{z}_1} F(z,0) = \dfrac{2\lambda(1+2\lambda|z|^2)}{3(1-|z|^2)^{2\lambda+2}}\\
&\frac{\partial^2}{\partial z_1 \partial \bar{z}_2} F(z,0) =\dfrac{2\lambda(|z|^2-2\lambda|z|^2-1)}{3(1-|z|^2)^{2\lambda+1}}\\
&\frac{\partial^2}{\partial z_2 \partial \bar{z}_1} F(z,0) = \dfrac{2\lambda(|z|^2-2\lambda|z|^2-1)}{3(1-|z|^2)^{2\lambda+1}} \\
&\frac{\partial^2}{\partial z_2 \partial \bar{z}_2} F(z,0) = \dfrac{2\lambda(1+2\lambda|z|^2)}{3(1-|z|^2)^{2\lambda}}\\
\end{align*}
Substituting the values of 
\begin{align*}
F(z,0), \frac{\partial}{\partial z_i} F (z, 0), \frac{\partial}{\partial \bar{z}_j} F (z, 0), \frac{\partial^2}{\partial z_i \partial \bar{z}_j} F(z,0)\,\,\mbox{and}\,\, \frac{\partial^2}{\partial z_i \partial \bar{z}_j}\log G((z,0),(z,0))    
\end{align*}
in the expression of $\mathcal{K}_{i \bar{j}}^{(\lambda, \mu,\nu)}((z, 0))$,
we obtain the expression of $\mathcal{K}_{i \bar{j}}^{(\lambda, \mu,\nu)}((z, 0))$ as stated in the statement of this proposition.
\end{proof}

\begin{theorem}\label{thm:symeqv}
For $i = 1, 2$, consider positive real numbers $\mu_i, \nu_i, \eta_i$. Then, the $\mob$-homogeneous analytic Hilbert modules $\mathcal{A}^{(\lambda_1,\mu_1, \nu_1,)}$ and  $\mathcal{A}^{(\lambda_2,\mu_2, \nu_2)}$ are unitarily equivalent if and only if $\lambda_1=\lambda_2,\,\,\mu_1=\mu_2$ and $\nu_1=\nu_2$.
\end{theorem}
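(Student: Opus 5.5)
The plan is to reduce unitary equivalence to equality of curvature matrices and then read off the parameters from the curvature at the origin. The ``if'' direction is trivial: if $(\lambda_1,\mu_1,\nu_1) = (\lambda_2,\mu_2,\nu_2)$, the two kernels coincide, so the modules are identical.

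For the ``only if'' direction, suppose $\mathcal{A}^{(\lambda_1,\mu_1,\nu_1)}$ and $\mathcal{A}^{(\lambda_2,\mu_2,\nu_2)}$ are unitarily equivalent. Both are analytic Hilbert modules by Corollary \ref{mob-homogeneity of A}, so the Cowen--Douglas result recalled in the introduction applies. It says the curvature matrix $\mathbb{K}$ of the associated line bundle determines the unitary equivalence class. Hence $\mathbb{K}^{(\lambda_1,\mu_1,\nu_1)}(\bl z) = \mathbb{K}^{(\lambda_2,\mu_2,\nu_2)}(\bl z)$ for every $\bl z \in \mathbb{D}^2$. Proposition \ref{prop:curv} shows that for M\"ob-homogeneous modules the curvature is determined by its values on $\{(z,0)\}$, but for the inequivalence argument we only need the single point $(0,0)$.

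Setting $z=0$ in Proposition \ref{inequiv:prop4} gives $1+3(1-|z|^2)^{2\lambda}=4$, and all terms carrying the factor $|z|^2$ vanish. This yields
\begin{equation*}
\mathcal{K}^{(\lambda,\mu,\nu)}_{1\bar 1}(0,0)=\mu+\tfrac{3\lambda}{2},\qquad \mathcal{K}^{(\lambda,\mu,\nu)}_{1\bar 2}(0,0)=\tfrac{3\lambda}{2},\qquad \mathcal{K}^{(\lambda,\mu,\nu)}_{2\bar 2}(0,0)=\nu+\tfrac{3\lambda}{2}.
\end{equation*}
Equating the off-diagonal entries for the two parameter triples gives $\lambda_1=\lambda_2$. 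Then the diagonal entries give $\mu_1=\mu_2$ and $\nu_1=\nu_2$.

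The main obstacle is making sure these values at the origin are correct, since the formulas in Proposition \ref{inequiv:prop4} involve delicate bookkeeping. I would cross-check them directly from \eqref{eqn:2}. The entries of the curvature matrix at $(0,0)$ are the coefficients of $z_i\bar w_j$ in $\log K^{(\lambda,\mu,\nu)}(\bl z,\bl w)$, because the constant term $K(\bl 0,\bl 0)=1$ normalises the expansion. To first order,
\begin{equation*}
K^{(\lambda,\mu,\nu)}(\bl z,\bl w)\approx\big(1+(\mu+\lambda)z_1\bar w_1+(\nu+\lambda)z_2\bar w_2\big)\Big(1+\tfrac{2\lambda}{4}(z_1\bar w_1+z_2\bar w_2)+\tfrac{6\lambda}{4}(z_1\bar w_2+z_2\bar w_1)\Big).
\end{equation*}
This reproduces the three displayed values: the $z_1\bar w_1$ coefficient is $\mu+\lambda+\lambda/2$, the $z_1\bar w_2$ coefficient is $3\lambda/2$, and the $z_2\bar w_2$ coefficient is $\nu+\lambda+\lambda/2$. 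The off-diagonal entry depends only on $\lambda$, which is what makes the separation of parameters work, and this completes the plan.
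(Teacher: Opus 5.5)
Your proposal is correct and follows the paper's proof: unitary equivalence forces the two curvature matrices to coincide, and at $(0,0)$ Proposition \ref{inequiv:prop4} gives off-diagonal entry $\tfrac{3\lambda}{2}$ and diagonal entries $\mu+\tfrac{3\lambda}{2}$, $\nu+\tfrac{3\lambda}{2}$, so equating them recovers $\lambda$, then $\mu$ and $\nu$. The paper does not include two things you add: the explicit (trivial) converse, and the independent check of the origin values by expanding $\log K^{(\lambda,\mu,\nu)}(\bl z,\bl w)$ from \eqref{eqn:2} to first order in the products $z_i\bar w_j$.
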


\begin{proof}
Let us assume that the analytic Hilbert modules $\mathcal{A}^{(\lambda_1,\mu_1, \nu_1)}$ and  $\mathcal{A}^{(\lambda_2\mu_2, \nu_2)}$ are unitarily equivalent. From Proposition \ref{inequiv:prop4}, it follows that the curvature matrix $\mathbb{K}^{(\lambda_i\mu_i, \nu_i)}(0,0)$ of $\mathcal{A}^{(\lambda_i,\mu_i, \nu_i)}$ is 
\[\mathbb{K}^{(\lambda_i,\mu_i, \nu_i)}(0,0) =
\begin{pmatrix}
\mu_i+\dfrac{3\lambda_i}{2} &
\dfrac{3\lambda_i}{2} \\[8pt]
\dfrac{3\lambda_i}{2} &
\nu_i+\dfrac{3\lambda_i}{2}
\end{pmatrix}.\]
Now, equating $\mathbb{K}^{(\lambda_1,\mu_1, \nu_1)}(0,0)$ and $\mathbb{K}^{(\lambda_2,\mu_2, \nu_2)}(0,0)$, we obtain $\mu_1=\mu_2,\,\nu_1=\nu_2,$ and $\lambda_1=\lambda_2$.
\end{proof}

The proof of the following theorem is similar to the proof of Theorem \ref{thm:symeqv} and therefore, the proof is excluded. 

\begin{theorem}\label{thm:traneqv}
For $i = 1, 2$, let $\alpha_i, \beta_i, \gamma_i$ be positive real numbers such that $\alpha_i,\beta_i>\gamma_i$. The analytic Hilbert modules  $\mathcal{H}^{(\alpha_1, \beta_1, \gamma_1)}$ and  $\mathcal{H}^{(\alpha_2, \beta_2, \gamma_2)}$ are unitarily equivalent if and only if $\alpha_1=\alpha_2,\,\beta_1=\beta_2$ and $\gamma_1=\gamma_2$.  
\end{theorem}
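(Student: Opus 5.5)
The plan follows the proof of Theorem \ref{thm:symeqv}. The ``if'' direction is trivial: equal parameters give the same kernel $B_{(\alpha,\beta,\gamma)}$, hence the same module. For the ``only if'' direction, suppose $\mathcal{H}^{(\alpha_1,\beta_1,\gamma_1)}$ and $\mathcal{H}^{(\alpha_2,\beta_2,\gamma_2)}$ are unitarily equivalent. Both are analytic Hilbert modules over $\mathbb{D}^2$, by Propositions \ref{propB2} and \ref{propB3}. By the Cowen--Douglas result recalled in the introduction, the curvature matrices of the associated line bundles then coincide at every point of $\mathbb{D}^2$. In particular, $\mathcal{B}^{(\alpha_1,\beta_1,\gamma_1)}(\boldsymbol{z}) = \mathcal{B}^{(\alpha_2,\beta_2,\gamma_2)}(\boldsymbol{z})$ for all $\boldsymbol{z}\in\mathbb{D}^2$.

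It suffices to evaluate at the origin. By Proposition \ref{prop 4.6} with $\boldsymbol{z}=(0,0)$,
\[
\mathcal{B}^{(\alpha_i,\beta_i,\gamma_i)}(0,0)=\begin{pmatrix}\alpha_i & -\gamma_i\\ -\gamma_i & \beta_i\end{pmatrix},\quad i=1,2.
\]
Comparing the diagonal entries gives $\alpha_1=\alpha_2$ and $\beta_1=\beta_2$, and comparing the off-diagonal entries gives $\gamma_1=\gamma_2$.

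There is no real obstacle here. The only point that needs care is justifying that unitary equivalence forces pointwise equality of the curvature matrices, rather than equality up to a change of coordinates. This holds because the unitary equivalence intertwines $M_{z_1}$ and $M_{z_2}$ on the two spaces. Hence it maps the joint eigenspace $\bigcap_i\ker(M_{z_i}-w_i)^*$ of one module onto that of the other at the same point $\boldsymbol{w}$. It follows that $K_2(\boldsymbol{z},\boldsymbol{w})=h(\boldsymbol{z})K_1(\boldsymbol{z},\boldsymbol{w})\overline{h(\boldsymbol{w})}$ for a nonvanishing holomorphic $h$. The $\partial\bar\partial\log$ of $|h|^2$ vanishes, so the curvature matrices agree at each point, exactly as was used in Theorem \ref{thm:symeqv}.
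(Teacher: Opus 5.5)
Your proposal is correct and follows the paper's approach: the paper omits this proof as ``similar to Theorem \ref{thm:symeqv}'', i.e.\ unitary equivalence forces equal curvature matrices, and by Proposition \ref{prop 4.6} at the origin these are $\begin{pmatrix}\alpha_i & -\gamma_i\\ -\gamma_i & \beta_i\end{pmatrix}$, so comparing entries recovers the parameters. Your extra justification of pointwise curvature equality, via the intertwining of joint eigenspaces and a nonvanishing holomorphic factor $h$, is the standard easy direction of the Cowen--Douglas theorem and is fine.
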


For positive real numbers $\alpha, \beta, \gamma, \eta$ satisfying $\alpha, \beta > \gamma$, let $\mathbb{B}^{(\alpha, \beta, \gamma, \eta)}$ be the curvature matrix of the kernel $\det \mathcal{B}^{(\alpha, \beta, \gamma, \eta)}$. In the following proposition, we provide a complete description of the curvature matrix $\mathbb{B}^{(\alpha, \beta, \gamma, \eta)}$.

\begin{prop}\label{curv det kernel}
Let $\mathbb{B}^{(\alpha,\beta,\gamma,\eta)}_{i\bar{j}}$, $1 \leq i, j\leq 2$, denote the $(i, j)$th entry of the curvature matrix $\mathbb{B}^{(\alpha,\beta,\gamma,\eta)}$ of the analytic Hilbert module $\mathbb{H}^{(\alpha,\beta,\gamma,\eta)}$. Then, for any $z \in \mathbb{D}$, we have
\begin{align*}
&\mathbb{B}^{(\alpha,\beta,\gamma,\eta)}_{1\bar{1}}(z,0)=\frac{1}{(1-|z|^2)^2}\left[ 2\alpha\eta+\frac{2\alpha\beta \big[\alpha\beta-\gamma^2 (1-|z|^2)^2 (1+2|z|^2)\big]}{\big(\alpha\beta-\gamma^2 (1-|z|^2)^2\big)^2}\right],\\
&\mathbb{B}^{(\alpha,\beta,\gamma,\eta)}_{1\bar{2}}(z,0) = \mathbb{B}^{(\alpha,\beta,\gamma,\eta)}_{2\bar{1}}(z,0)= -2\gamma\eta-\frac{2\gamma^2 (1-|z|^2)\big[\alpha\beta(1-3|z|^2)-\gamma^2 (1-|z|^2)^3\big]}{\big(\alpha\beta-\gamma^2 (1-|z|^2)^2\big)^2}\,\,\mbox{and}\\
&\mathbb{B}^{(\alpha,\beta,\gamma,\eta)}_{2\bar{2}}(z,0)= 2\beta\eta+\frac{2\alpha\beta \big[\alpha\beta-\gamma^2 (1-|z|^2)^2 (1+2|z|^2)\big]}{\big(\alpha\beta-\gamma^2 (1-|z|^2)^2\big)^2}.
\end{align*}
\end{prop}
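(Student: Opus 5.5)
The plan is to compute the curvature directly, splitting $\log\det\mathcal{B}^{(\alpha,\beta,\gamma,\eta)}(\bl z,\bl z)$ into a part whose curvature is already known and a scalar remainder. By Equation \eqref{eqn:dettran1},
\[
\log \det \mathcal{B}^{(\alpha,\beta,\gamma,\eta)}(\bl z,\bl z) = 2\eta \log B_{(\alpha,\beta,\gamma)}(\bl z,\bl z) + \log D(\bl z),
\]
where
\[
D(\bl z) = \frac{\alpha\beta}{(1-|z_1|^2)^2(1-|z_2|^2)^2}-\frac{\gamma^2}{(1-z_1\bar z_2)^2(1-z_2\bar z_1)^2}.
\]
The curvature of the first summand is $2\eta\,\mathcal{B}^{(\alpha,\beta,\gamma)}$. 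Evaluating Proposition \ref{prop 4.6} at $(z,0)$ gives the contributions $\frac{2\alpha\eta}{(1-|z|^2)^2}$, $-2\gamma\eta$, $-2\gamma\eta$ and $2\beta\eta$. These are exactly the $\eta$-dependent terms in the statement, so it remains to show that $\partial_i\bar\partial_j\log D$ at $(z,0)$ equals the remaining rational expressions.

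To make $\log D$ tractable, I will put $D$ over a common denominator, as was done for $K^{(\lambda,\mu,\nu)}$ in Proposition \ref{inequiv:prop4}:
\[
D = \frac{N}{P},\qquad P=(1-|z_1|^2)^2(1-|z_2|^2)^2(1-z_1\bar z_2)^2(1-z_2\bar z_1)^2,
\]
\[
N=\alpha\beta(1-z_1\bar z_2)^2(1-z_2\bar z_1)^2-\gamma^2(1-|z_1|^2)^2(1-|z_2|^2)^2 .
\]
Here $\log P$ is a sum of logarithms of the elementary factors. Its mixed derivatives follow from $\partial_i\bar\partial_j\log(1-z_i\bar z_j)=-(1-z_i\bar z_j)^{-2}$, exactly as in Proposition \ref{prop 4.6}. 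At $(z,0)$, $-\log P$ contributes $\frac{2}{(1-|z|^2)^2}$ to the $(1,1)$ entry, $2$ to the $(2,2)$ entry, and $-2$ to each off-diagonal entry.

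For $\log N$ I will use the quotient formula
\[
\partial_i\bar\partial_j\log N=\frac{N\,\partial_i\bar\partial_jN-\partial_iN\,\bar\partial_jN}{N^2},
\]
as was done for $F$ in Proposition \ref{inequiv:prop4}. I will tabulate the values at $(z,0)$ of $N$, of its four first derivatives and of its four mixed second derivatives, writing $r=|z|^2$. For example,
\[
N(z,0)=\alpha\beta-\gamma^2(1-r)^2,\qquad \partial_1N(z,0)=2\gamma^2(1-r)\bar z,\qquad \partial_2N(z,0)=-2\alpha\beta z,
\]
and similarly for $\bar\partial_j N$ and for the mixed second derivatives. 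These are routine product-rule computations. Only terms of total degree at most one in each of $z_2,\bar z_2$ survive at $z_2=0$, which keeps the bookkeeping small.

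The main obstacle is the final algebra: combining $-\partial_i\bar\partial_j\log P$ with $\partial_i\bar\partial_j\log N$ over the common denominator $\big(\alpha\beta-\gamma^2(1-r)^2\big)^2$ and simplifying to the stated closed forms. One also has to check that the $(1,1)$ and $(2,2)$ entries produce the same bracket $\alpha\beta-\gamma^2(1-r)^2(1+2r)$, up to the overall factor $(1-r)^{-2}$ in the $(1,1)$ entry. I would organize this by expanding everything as polynomials in $r$ with coefficients in $\alpha\beta$ and $\gamma^2$. I would first sanity-check the result at $z=0$, where it must reduce to the curvature at the origin, and at $\gamma=0$, where the kernel becomes a product of weighted Bergman kernels. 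Finally, $\mathbb{B}_{1\bar2}=\mathbb{B}_{2\bar1}$ at $(z,0)$ will follow from Hermitian symmetry, since the computed off-diagonal value is real.
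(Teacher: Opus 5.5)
Your strategy is the paper's. You peel off $2\eta\log B_{(\alpha,\beta,\gamma)}$, whose curvature at $(z,0)$ supplies the $\eta$-terms via Proposition \ref{prop 4.6}, and then apply the quotient formula for $\partial_i\bar\partial_j\log$ to the remaining scalar at $(z,0)$. The paper differentiates $D$ (its $F$) directly, whereas you first clear denominators ($D=N/P$). That is only a bookkeeping change, and a convenient one: it avoids the $(1-r)^{-4}$ in $\partial_1\bar\partial_1 F(z,0)$, which the paper's table misprints as $(1-r)^{-2}$.

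However, one of your stated intermediate values is wrong, and with it the off-diagonal entry will not reduce to the claimed formula.
\begin{itemize}
\item The factor $(1-z_1\bar z_2)^2$ of $P$ gives $\partial_1\bar\partial_2\log(1-z_1\bar z_2)^2=-2(1-z_1\bar z_2)^{-2}$. The other three factors are annihilated by $\partial_1$ or $\bar\partial_2$. Hence $\partial_1\bar\partial_2\log P(z,0)=-2$, so $-\log P$ contributes $+2$, not $-2$, to the off-diagonal entries.
\item Your own $\gamma=0$ check would catch this. There $\partial_1\bar\partial_2\log N(z,0)=-2$, while the total must vanish, because $D$ becomes a function of $z_1$ times a function of $z_2$. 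Your sign would give $-4$.
\item A smaller slip: $\partial_2N(z,0)=-2\alpha\beta\bar z$, not $-2\alpha\beta z$; and $\bar\partial_2N(z,0)=-2\alpha\beta z$.
\end{itemize}

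With these corrections, $\partial_1\bar\partial_2N(z,0)=-2\alpha\beta$ and
\[
\partial_1\bar\partial_2\log N(z,0)=\frac{-2\alpha\beta\big(\alpha\beta-\gamma^2(1-r)^2\big)+4\alpha\beta\gamma^2r(1-r)}{\big(\alpha\beta-\gamma^2(1-r)^2\big)^2}.
\]
Adding $2$ gives exactly $-\frac{2\gamma^2(1-r)[\alpha\beta(1-3r)-\gamma^2(1-r)^3]}{(\alpha\beta-\gamma^2(1-r)^2)^2}$.

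The diagonal entries close as you predict. Use $\partial_1\bar\partial_1N(z,0)=2\gamma^2(1-2r)$ and $\partial_2\bar\partial_2N(z,0)=4\alpha\beta r+2\gamma^2(1-r)^2$. Adding $\frac{2}{(1-r)^2}$, respectively $2$, produces the common numerator $2\alpha\beta[\alpha\beta-\gamma^2(1-r)^2(1+2r)]$ over the stated denominators.
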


\begin{proof}
Suppose $F(\bl z) = \frac{\alpha\beta}{(1-|z_1|^2)^2(1-|z_2|^2)^2}-\frac{\gamma^2}{|1-z_1\bar z_2|^4}$ for $\bl z=(z_1,z_2)\in \mathbb{D}^2$. Then, we have $\det \mathcal{ B }^{ (\alpha,\beta,\gamma,\eta) }(\bl z, \bl z)=  [B^{ (\alpha,\beta,\gamma) }(\bl z,\bl z)]^{2\eta}  F(\bl z)$, $\bl z \in \mathbb{D}^2$. 
    For $1 \leq i, j \leq 2$ and $z \in \mathbb{D}$, note that
\begin{align*}
\nonumber \mathbb{B}^{(\alpha,\beta,\gamma,\eta)}_{i \bar{j}}(z, 0) &= \frac{\partial^2}{\partial z_i \partial \bar{z}_j} \log \det \mathcal{ B }^{ (\alpha,\beta,\gamma,\lambda) }((z, 0), (z, 0))\\
\end{align*}
\begin{align}\label{eqn:det-tran2}
\nonumber &= 2\eta \frac{\partial^2}{\partial z_i \partial \bar{z}_j} \log B^{(\alpha, \beta, \gamma)}((z, 0), (z, 0)) + \frac{\partial^2}{\partial z_i \partial \bar{z}_j} \log F(z, 0)\\
\nonumber &= 2\eta\mathcal{B}^{(\alpha,\beta,\gamma)}_{i \bar{j}}(z, 0)+\frac{\partial^2}{\partial z_i \partial \bar{z}_j} \log F(z, 0)\\
 &= 2\eta\mathcal{B}^{(\alpha,\beta,\gamma)}_{i \bar{j}}(z, 0) + \dfrac{F(z, 0) \frac{\partial^2}{\partial z_i \partial \bar{z}_j} F(z, 0)-\frac{\partial}{\partial z_i}F(z, 0) \frac{\partial}{\partial \bar{z}_j} F(z, 0)}{F(z, 0)^2}.
\end{align}
Replacing $\bl z$ by $(z, 0),$ $z \in \mathbb{D},$ in the expression of $F$, we obtain 
$$F(z,0)=\dfrac{\alpha\beta}{(1-|z|^2)^2}-\gamma^2.$$
For an arbitrary $z \in \mathbb{D}$, a direct computation shows that
\begin{align*}
&\frac{\partial}{\partial z_1} F (z, 0) =\frac{2\alpha\beta\bar{z}}{(1-|z|^2)^3} \\
&\frac{\partial}{\partial z_2} F(z, 0) = -2\gamma^2\bar{z}\\
&\frac{\partial}{\partial \bar{z}_1} F (z, 0)= \frac{2\alpha\beta z}{(1-|z|^2)^3}\\
& \frac{\partial}{\partial \bar{z}_2} F(z, 0) =-2\gamma^2 z \\
&\frac{\partial^2}{\partial z_1 \partial \bar{z}_1} F(z, 0)=\dfrac{(2+4|z|^2)\alpha\beta}{(1-|z|^2)^2} \\
&\frac{\partial^2}{\partial z_1 \partial \bar{z}_2} F(z, 0)= -2\gamma^2\\
&\frac{\partial^2}{\partial z_2 \partial \bar{z}_1} F(z, 0)= -2\gamma^2\\
&\frac{\partial^2}{\partial z_2 \partial \bar{z}_2} F(z, 0)= \dfrac{2\alpha\beta}{(1-|z|^2)^2}-4\gamma^2|z|^2.\\
\end{align*}
Substituting the values of 
\begin{align*}
 F(z, 0), \frac{\partial}{\partial z_i} F (z, 0), \frac{\partial}{\partial \bar{z}_j} F(z, 0),\frac{\partial^2}{\partial z_i \partial \bar{z}_j} F (z, 0) \,\,\mbox{and} \,\,\ \mathcal{B}^{(\alpha,\beta,\gamma)}_{i \bar{j}}(z,0)
\end{align*}
in Equation \eqref{eqn:det-tran2}, we obtain the expression of $\mathbb{B}^{(\alpha,\beta,\gamma,\lambda)}_{i\bar{j}}((z, 0))$ as stated in the statement of this proposition.
\end{proof}

The following theorem shows that the unitary equivalence class of the module $\mathbb{H}^{(\alpha, \beta, \gamma, \eta)}$ depends on $\frac{\alpha}{\beta}$, $\frac{\beta}{\gamma}$ and $\gamma \eta$.

\begin{theorem}\label{inequiv of det kernel}
For $i = 1, 2$, suppose $\alpha_i, \beta_i, \gamma_i, \eta_i$ are positive real numbers such that $\alpha_i,\beta_i>\gamma_i$. Then, the analytic Hilbert modules  $\mathbb{H}^{(\alpha_1, \beta_1, \gamma_1, \eta_1)}$ and  $\mathbb{H}^{(\alpha_2, \beta_2, \gamma_2, \eta_2)}$ are unitarily equivalent if and only if $\frac{\alpha_1}{\alpha_2}=\frac{\beta_1}{\beta_2}=\frac{\gamma_1}{\gamma_2}=\frac{\eta_2}{\eta_1}$.  
\end{theorem}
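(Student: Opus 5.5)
The plan is to use the Cowen--Douglas principle recalled in the introduction: two analytic Hilbert modules are unitarily equivalent if and only if their curvature matrices coincide on $\mathbb{D}^2$. Both modules are M\"ob-homogeneous, so by Proposition \ref{prop:curv} (equation \eqref{eqn:quasi3}) the curvature matrix at $(z_1,z_2)$ is obtained from its value at $(\varphi_{z_2}(z_1),0)$ by the same congruence, which depends only on $\tilde{\varphi}_{z_2}$ and not on the module. Hence
\[
\mathbb{B}^{(\alpha_1,\beta_1,\gamma_1,\eta_1)}=\mathbb{B}^{(\alpha_2,\beta_2,\gamma_2,\eta_2)} \text{ on } \mathbb{D}^2
\iff
\mathbb{B}^{(\alpha_1,\beta_1,\gamma_1,\eta_1)}(z,0)=\mathbb{B}^{(\alpha_2,\beta_2,\gamma_2,\eta_2)}(z,0) \text{ for all } z\in\mathbb{D}.
\]
Both directions will therefore be read off from Proposition \ref{curv det kernel}.

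\emph{Sufficiency.} I would argue directly at the level of kernels. Suppose $\alpha_2=t\alpha_1$, $\beta_2=t\beta_1$, $\gamma_2=t\gamma_1$ and $\eta_2=\eta_1/t$ for some $t>0$.
\begin{itemize}
\item By Remark \ref{remB}, $B_{(\alpha_2,\beta_2,\gamma_2)}=B_{(\alpha_1,\beta_1,\gamma_1)}^{t}$, so $B_{(\alpha_2,\beta_2,\gamma_2)}^{2\eta_2}=B_{(\alpha_1,\beta_1,\gamma_1)}^{2\eta_1}$.
\item The bracket in \eqref{eqn:dettran1} is homogeneous of degree $2$ in $(\alpha,\beta,\gamma)$, so it is multiplied by $t^2$.
\end{itemize}
Thus the two determinant kernels differ by the constant factor $t^2$. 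Their curvatures then agree, and the modules are unitarily equivalent.

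\emph{Necessity.} Assume the curvatures agree on $\{(z,0)\}$. Put $\rho=\gamma^2/(\alpha\beta)\in(0,1)$ and $s=1-|z|^2$. Dividing numerator and denominator by $(\alpha\beta)^2$ in Proposition \ref{curv det kernel} gives
\begin{align*}
(1-|z|^2)^2\,\mathbb{B}_{1\bar1}(z,0) &= 2\alpha\eta+\frac{2\big(1-\rho s^2(3-2s)\big)}{(1-\rho s^2)^2},\\
\mathbb{B}_{2\bar2}(z,0) &= 2\beta\eta+\frac{2\big(1-\rho s^2(3-2s)\big)}{(1-\rho s^2)^2},\\
\mathbb{B}_{1\bar2}(z,0) &= -2\gamma\eta-\frac{2\rho s\big((1-3(1-s))-\rho s^3\big)}{(1-\rho s^2)^2}.
\end{align*}
Each right-hand side is a constant plus a rational function of $s\in(0,1]$ that depends only on $\rho$. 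I would then recover the four parameters in two steps.
\begin{itemize}
\item \emph{Recovering $\rho$.} Subtract the value at $s=1$ (that is, $z=0$) from the $(1,1)$ entry; this removes the constant $2\alpha\eta$ and leaves a function of $s$ depending only on $\rho$. Show it determines $\rho$, for instance by computing its Taylor coefficient in $s^2$ at $s=0$ (the limit $|z|\to1$), which should be a nonzero multiple of $\rho$.
\item \emph{Recovering the products.} Once $\rho$ is known, the constants $\alpha\eta$, $\beta\eta$ and $\gamma\eta$ are determined by the three entries.
\end{itemize}
Hence $\alpha_1\eta_1=\alpha_2\eta_2$, $\beta_1\eta_1=\beta_2\eta_2$ and $\gamma_1\eta_1=\gamma_2\eta_2$. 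These give $\frac{\alpha_1}{\alpha_2}=\frac{\beta_1}{\beta_2}=\frac{\gamma_1}{\gamma_2}=\frac{\eta_2}{\eta_1}$, as claimed, and equality of $\rho$ is then automatic.

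The main obstacle is the injectivity step in $\rho$: checking that the $\rho$-dependent rational function of $s$ is not the same for two different values of $\rho\in(0,1)$. Evaluation at $z=0$ alone gives three equations in four unknowns, so the $z$-dependence is genuinely needed. I would first try expanding in powers of $s$ near $s=0$ and comparing the lowest-order $\rho$-dependent coefficient. If that is messy, I would instead use monotonicity in $\rho$ of the difference between the values at $s=1$ and at a fixed intermediate $s$.
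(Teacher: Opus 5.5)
Your proof is correct, but it differs from the paper's in both directions.

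\textbf{Sufficiency.} The paper checks entry by entry that the formulas of Proposition \ref{curv det kernel} coincide on the slice $\{(z,0)\}$. It uses that $\alpha\eta$, $\beta\eta$, $\gamma\eta$ and $\alpha\beta/\gamma^2$ are unchanged under the rescaling, and then invokes Proposition \ref{prop:curv} to extend the equality to all of $\mathbb{D}^2$. Your observation that the two determinant kernels differ by the constant factor $t^2$ is shorter and cleaner. It needs neither the curvature formulas nor M\"ob-homogeneity.

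\textbf{Necessity.} The paper also reads everything off Proposition \ref{curv det kernel}, but it simply lets $|z|\to 1$, i.e.\ $s\to 0$. In your notation, the $\rho$-dependent rational terms in the $(1,1)$, $(2,2)$ and $(1,2)$ entries tend to $2$, $2$ and $0$ respectively, regardless of $\rho$. So $\alpha\eta$, $\beta\eta$ and $\gamma\eta$ come out at once. You are right that evaluation at $z=0$ alone is not enough, but the boundary limit already is. The injectivity in $\rho$ that you single out as the main obstacle is therefore never needed. Your Taylor-coefficient route does work: the $s^2$-coefficient of $\frac{2(1-\rho s^2(3-2s))}{(1-\rho s^2)^2}$ is $-2\rho$. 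It is a detour, though, and as you note yourself, equality of $\rho$ follows automatically once the three products agree.
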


\begin{proof}
Assume that the analytic Hilbert modules $\mathbb{H}^{(\alpha_1, \beta_1, \gamma_1, \eta_1)}$ and  $\mathbb{H}^{(\alpha_2, \beta_2, \gamma_2, \eta_2)}$ are equivalent. 
Recall that the curvature matrix of the analytic Hilbert module $\mathbb{H}^{(\alpha_i, \beta_i, \gamma_i, \eta_i)}$ is denoted by $\mathbb{B}^{(\alpha_i, \beta_i, \gamma_i, \eta_i)}$. Due to our assumption, we have 
\begin{equation}\label{eqn:detcurvineqiv2}
\mathbb{B}^{(\alpha_1, \beta_1, \gamma_1, \eta_1)}(z, 0) = \mathbb{B}^{(\alpha_2, \beta_2, \gamma_2, \eta_2)} (z, 0),\,\, z \in \mathbb{D}.    
\end{equation}
In particular, for $i, j = 1 , 2$, we have $\mathbb{B}_{i \bar{j}}^{(\alpha_1, \beta_1, \gamma_1, \eta_1)}(z, 0) = \mathbb{B}_{i \bar{j}}^{(\alpha_2, \beta_2, \gamma_2, \eta_2)}(z, 0)$, $z \in \mathbb{D}$. Note that
$$\mathbb{B}^{(\alpha_i,\beta_i,\gamma_i,\eta_i)}_{1\bar{1}}(z,0)=\frac{1}{(1-|z|^2)^2}\left[ 2\alpha_i\eta_i + \frac{2\alpha_i \beta_i \left[\alpha_i \beta_i -\gamma_i^2 (1-|z|^2)^2 (1+2|z|^2)\right]}{\left(\alpha_i\beta_i-\gamma_i^2 (1-|z|^2)^2\right)^2}\right],\,\,z \in \mathbb{D},$$
thanks to Proposition \ref{curv det kernel}. Equation \eqref{eqn:detcurvineqiv2} implies that
$$(1 - |z|^2)^2 \mathbb{B}_{1 \bar{1}}^{(\alpha_1, \beta_1, \gamma_1, \eta_1)}(z, 0) = (1 - |z|^2)^2\mathbb{B}_{1 \bar{1}}^{(\alpha_2, \beta_2, \gamma_2, \eta_1)}(z, 0)$$
holds for every $z \in \mathbb{D}$. Now, taking $|z| \to 1$ in the above equation, we obtain
\begin{equation}\label{eqn:crvdetinequiv1}
 2\alpha_1 \eta_1 + 2 = 2 \alpha_2 \eta_2 + 2.   
\end{equation}

Similarly, equating $\mathbb{B}^{(\alpha_1,\beta_1,\gamma_1,\eta_1)}_{2\bar{2}}(z,0)$, $\mathbb{B}^{(\alpha_2,\beta_2,\gamma_2,\eta_2)}_{2\bar{2}}(z,0)$ and then considering $|z| \to 1$, we obtain
\begin{equation}\label{eqn:crvdetinequiv3}
 2\beta_1 \eta_1 + 2 = 2 \beta_2 \eta_2 + 2.   
\end{equation}
Finally, equating $\mathbb{B}^{(\alpha_1,\beta_1,\gamma_1,\eta_1)}_{1\bar{2}}(z,0)$, $\mathbb{B}^{(\alpha_2,\beta_2,\gamma_2,\eta_2)}_{1\bar{2}}(z,0)$ and then taking $|z| \to 1$, we obtain
\begin{equation}\label{eqn:crvdetinequiv4}
 2\gamma_1 \eta_1 = 2 \gamma_2 \eta_2 .   
\end{equation}
Equations \eqref{eqn:crvdetinequiv1}, \eqref{eqn:crvdetinequiv3}, \eqref{eqn:crvdetinequiv4} together imply that $\frac{\alpha_1}{\alpha_2}=\frac{\beta_1}{\beta_2}=\frac{\gamma_1}{\gamma_2}=\frac{\eta_2}{\eta_1}$.  \\

 Conversely, assume that $\frac{\alpha_1}{\alpha_2}=\frac{\beta_1}{\beta_2}=\frac{\gamma_1}{\gamma_2}=\frac{\eta_2}{\eta_1}$. Hence, 
\begin{align*}
\alpha_1 \eta_1=\alpha_2 \eta_2,\beta_1 \eta_1=\beta_2 \eta_2,\gamma_1 \eta_1=\gamma_2 \eta_2\,\mbox{and} \,\frac{\alpha_1\beta_1}{\gamma_1^2}=\frac{\alpha_2\beta_2}{\gamma_2^2}
\end{align*}
\begin{align*}
    \mathbb{B}^{(\alpha_1,\beta_1,\gamma_1,\eta_1)}_{1\bar{1}}(z,0)&=\frac{1}{(1-|z|^2)^2}\left[ 2\alpha_1\eta_1 + \frac{2\alpha_1 \beta_1 \left[\alpha_1 \beta_1 -\gamma_1^2 (1-|z|^2)^2 (1+2|z|^2)\right]}{\left(\alpha_1\beta_1-\gamma_1^2 (1-|z|^2)^2\right)^2}\right]\\
    &=\frac{1}{(1-|z|^2)^2}\left[ 2\alpha_1\eta_1 + \frac{2\alpha_1 \beta_1 \left[\frac{\alpha_1\beta_1}{\gamma_1^2} - (1-|z|^2)^2 (1+2|z|^2)\right]}{\gamma_1^2\left(\frac{\alpha_1\beta_1}{\gamma_1^2}- (1-|z|^2)^2\right)^2}\right]\\
    &=\frac{1}{(1-|z|^2)^2}\left[ 2\alpha_2\eta_2 + \frac{2\alpha_2 \beta_2 \left[\frac{\alpha_2\beta_2}{\gamma_2^2} - (1-|z|^2)^2 (1+2|z|^2)\right]}{\gamma_2^2\left(\frac{\alpha_2\beta_2}{\gamma_2^2}- (1-|z|^2)^2\right)^2}\right]\\
    &=\frac{1}{(1-|z|^2)^2}\left[ 2\alpha_2\eta_2 + \frac{2\alpha_2 \beta_2 \left[\alpha_2 \beta_2 -\gamma_2^2 (1-|z|^2)^2 (1+2|z|^2)\right]}{\left(\alpha_2\beta_2-\gamma_2^2 (1-|z|^2)^2\right)^2}\right]\\
    &=\mathbb{B}^{(\alpha_2,\beta_2,\gamma_2,\eta_2)}_{2\bar{2}}(z,0)
\end{align*}

Similarly, using the equations $\gamma_1\eta_1=\gamma_2 \eta_2\,\mbox{and} \,\frac{\alpha_1\beta_1}{\gamma_1^2}=\frac{\alpha_2\beta_2}{\gamma_2^2}$, we obtain 
$$\mathbb{B}^{(\alpha_1,\beta_1,\gamma_1,\eta_1)}_{1\bar{2}}(z,0)=\mathbb{B}^{(\alpha_2,\beta_2,\gamma_2,\eta_2)}_{1\bar{2}}(z,0)\,\, \mbox{and}\,\, \mathbb{B}^{(\alpha_1,\beta_1,\gamma_1,\eta_1)}_{2\bar{1}}(z,0)=\mathbb{B}^{(\alpha_2,\beta_2,\gamma_2,\eta_2)}_{2\bar{1}}(z,0).$$

Finally, the equations $\beta_1\eta_1=\beta_2 \eta_2\,\mbox{and} \,\frac{\alpha_1\beta_1}{\gamma_1^2}=\frac{\alpha_2\beta_2}{\gamma_2^2}$ gives us 
$$\mathbb{B}^{(\alpha_1,\beta_1,\gamma_1,\lambda_1)}_{2\bar{2}}(z,0)=\mathbb{B}^{(\alpha_2,\beta_2,\gamma_2,\lambda_2)}_{2\bar{2}}(z,0).$$
This proves that $\mathbb{B}^{(\alpha_1, \beta_1, \gamma_1, \eta_1)}(z,0) = \mathbb{B}^{(\alpha_2, \beta_2, \gamma_2, \eta_2)}(z, 0)$ holds for every $z \in \mathbb{D}$. 
Since the analytic Hilbert modules $\mathbb{H}^{(\alpha_1, \beta_1, \gamma_1, \eta_1)}$ and  $\mathbb{H}^{(\alpha_2, \beta_2, \gamma_2, \eta_2)}$ are $\mob$-homogeneous, it follows from Proposition \ref{prop:curv} that $\mathbb{B}^{(\alpha_1, \beta_1, \gamma_1, \eta_1)}(\boldsymbol{z}) = \mathbb{B}^{(\alpha_2, \beta_2, \gamma_2, \eta_2)}(\boldsymbol{z})$ holds for every $\boldsymbol{z} \in \mathbb{D}^2$. Therefore, the Hilbert modules  $\mathbb{H}^{(\alpha_1, \beta_1, \gamma_1, \eta_1)}$ and  $\mathbb{H}^{(\alpha_2, \beta_2, \gamma_2, \eta_2)}$ are unitary equivalent.
\end{proof}

The following theorem shows that three families of M\"ob-homogeneous analytic Hilbert modules, described in Section \ref{sec 3}, are mutually distinct up to unitary equivalence and each module in these three families is unitarily inequivalent to a weighted Bergman module.  

\begin{theorem}\label{inequivalance}
    No M\"ob-homogeneous analytic Hilbert module belonging to any one of the three families $\{\mathcal{A}^{(\lambda,\mu,\nu)} : \lambda, \mu, \nu > 0\}$, $\{\mathcal{H}^{(\alpha, \beta, \gamma)} : \alpha, \beta > \gamma > 0\}$ and $\{\mathbb{H}^{(\alpha, \beta, \gamma, \eta)} : \alpha, \beta > \gamma > 0, \eta \geq 0\}$ is unitarily equivalent to a module belonging to another family. Moreover, no Hilbert module belonging to any of the above three families is unitarily equivalent to a weighted Bergman module over the bidisc.
\end{theorem}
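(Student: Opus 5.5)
The plan is to compare curvature matrices. By Cowen--Douglas, two analytic Hilbert modules are unitarily equivalent exactly when their curvature matrices coincide on all of $\mathbb{D}^2$. For M\"ob-homogeneous modules, Proposition \ref{prop:curv} reduces this to equality along the slice $\{(z,0) : z\in\mathbb{D}\}$. So in each pairwise comparison I will compare the explicit formulas from Propositions \ref{inequiv:prop4}, \ref{prop 4.6} and \ref{curv det kernel} at $(z,0)$. I will look at two quantities: the off-diagonal entry $\mathbb{K}_{1\bar 2}(z,0)$, and the boundary behaviour of $(1-|z|^2)^2\mathbb{K}_{1\bar1}(z,0)$ and of $\mathbb{K}_{2\bar 2}(z,0)$ as $|z|\to 1$.

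First, the weighted Bergman modules. For $\boldsymbol{B}^{(\lambda,\mu)}$ the curvature is diagonal, $\mathrm{diag}\big(\lambda(1-|z_1|^2)^{-2},\mu(1-|z_2|^2)^{-2}\big)$, so its off-diagonal entry vanishes identically. For $\mathcal{H}^{(\alpha,\beta,\gamma)}$, Proposition \ref{prop 4.6} gives $\mathcal{B}_{1\bar2}(z,0)=-\gamma\neq 0$. For $\mathcal{A}^{(\lambda,\mu,\nu)}$, the value at $z=0$ is $3\lambda/2\neq0$ (see the proof of Theorem \ref{thm:symeqv}). For $\mathbb{H}^{(\alpha,\beta,\gamma,\eta)}$, Proposition \ref{curv det kernel} at $z=0$ gives
\[-2\gamma\eta-\frac{2\gamma^2(\alpha\beta-\gamma^2)}{(\alpha\beta-\gamma^2)^2}=-2\gamma\eta-\frac{2\gamma^2}{\alpha\beta-\gamma^2},\]
which is strictly negative because $\alpha\beta>\gamma^2$. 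This negativity also covers the case $\eta=0$. Hence none of the three families contains a weighted Bergman module.

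Second, the three families against each other. The $\mathcal{A}$-family has positive off-diagonal entry $3\lambda/2$ at the origin, while the other two families have strictly negative values there. This already separates $\mathcal{A}$ from both $\mathcal{H}$ and $\mathbb{H}$. To separate $\mathcal{H}^{(\alpha,\beta,\gamma)}$ from $\mathbb{H}^{(\alpha',\beta',\gamma',\eta)}$, I will use the behaviour as $|z|\to1$ with $z_2=0$. In $\mathcal{H}$, $\mathcal{B}_{2\bar2}(z,0)=\beta$ and $\mathcal{B}_{1\bar2}(z,0)=-\gamma$ are constant in $z$. In $\mathbb{H}$, $\mathbb{B}_{1\bar2}(z,0)$ is a nonconstant function of $|z|^2$. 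Indeed, at $z=0$ its value is $-2\gamma'\eta-2\gamma'^2/(\alpha'\beta'-\gamma'^2)$, while as $|z|\to1$ it tends to $-2\gamma'\eta$, and these differ. So $\mathbb{B}_{1\bar 2}(z,0)$ cannot equal the constant $-\gamma$, which gives inequivalence. As an alternative check, $\mathbb{B}_{2\bar 2}(z,0)$ also varies with $|z|$: it equals $2\beta'\eta+2$ in the limit and something different at $z=0$.

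The main obstacle is making these nonconstancy and nonvanishing claims airtight from the formulas. The point to watch is the $z=0$ evaluation of the $\mathbb{H}$-entries, where the factor $\alpha\beta-\gamma^2>0$ guaranteed by $\alpha,\beta>\gamma$ is what keeps the extra term nonzero. I would also double-check the sign of the $\mathcal{A}$ off-diagonal entry against Proposition \ref{inequiv:prop4}, since the separation of $\mathcal{A}$ from the other two families rests entirely on that sign.
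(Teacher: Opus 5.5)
Your proposal is correct. It stays within the paper's framework: unitary equivalence forces the curvature matrices to agree at every point, so one differing entry at one point, or along the slice $\{(z,0)\}$, is enough. At the central step, however, you compare different entries than the paper does.

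\textbf{How the paper argues.} It separates $\mathcal{A}^{(\lambda,\mu,\nu)}$ from $\mathbb{H}^{(\alpha,\beta,\gamma,\eta)}$ through the $(1,1)$ entries. It combines their values at the origin, $\mu+\frac{3\lambda}{2}=2\alpha\eta+\frac{2\alpha\beta}{\alpha\beta-\gamma^2}$, with the boundary limits of $(1-|z|^2)^2$ times these entries, $\mu+3\lambda=2\alpha\eta+2$. Together these force $\lambda=-\frac{4\gamma^2}{3(\alpha\beta-\gamma^2)}<0$. The other cross-family cases are left as ``similar''. Only the comparisons with weighted Bergman modules use the off-diagonal entry at the origin, exactly as you do.

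\textbf{Your sign argument.} You compare $\mathcal{K}^{(\lambda,\mu,\nu)}_{1\bar 2}(0,0)=\frac{3\lambda}{2}>0$ with $-\gamma<0$ and with $-2\gamma\eta-\frac{2\gamma^2}{\alpha\beta-\gamma^2}<0$. This needs the curvature at a single point and no boundary asymptotics. The sign you wanted to double-check is right. The factor $(1-|z_1|^2)^{-\mu-\lambda}(1-|z_2|^2)^{-\nu-\lambda}$ contributes nothing off the diagonal. The bracket $(1-|z_1|^2)^{-2\lambda}(1-|z_2|^2)^{-2\lambda}+3|1-z_1\bar z_2|^{-4\lambda}$ expands at the origin as $4+2\lambda(|z_1|^2+|z_2|^2)+6\lambda(z_1\bar z_2+z_2\bar z_1)+\cdots$, which gives $\frac{6\lambda}{4}=\frac{3\lambda}{2}$.

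\textbf{$\mathcal{H}$ versus $\mathbb{H}$.} Here both off-diagonal values at the origin are negative and may coincide, so a single-point comparison cannot work. Your non-constancy argument handles this correctly: the constant $-\gamma$ cannot match $\mathbb{B}_{1\bar 2}(z,0)$, which moves from $-2\gamma'\eta-\frac{2\gamma'^2}{\alpha'\beta'-\gamma'^2}$ at $z=0$ to $-2\gamma'\eta$ as $|z|\to 1$. This spells out a case the paper omits. It does use the full formula of Proposition~\ref{curv det kernel}, which checks out.

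Your inequalities also survive $\eta=0$, which the statement allows. The appeal to Proposition~\ref{prop:curv} is unnecessary, since you only use the necessary direction.
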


\begin{proof}
Suppose $\lambda,\mu, \nu, \eta, \alpha, \beta, \gamma$ are positive real numbers such that $\alpha, \beta > \gamma$. We first prove that the analytic Hilbert modules  $\mathcal{A}^{(\lambda,\mu,\nu)}$ and  $\mathbb{H}^{(\alpha, \beta, \gamma, \eta)}$ are not unitarily equivalent. 

Assume that $\mathcal{A}^{(\lambda,\mu,\nu)}$ and  $\mathbb{H}^{(\alpha, \beta, \gamma, \eta)}$ are unitarily equivalent. Recall that $\mathbb{K}^{(\lambda,\mu,\nu)}(\bl z)$ and $\mathbb{B}^{(\alpha, \beta, \gamma, \eta)}(\bl z)$ are curvature matrices of $\mathcal{A}^{(\lambda,\mu,\nu)}$ and  $\mathbb{H}^{(\alpha, \beta, \gamma, \eta)}$, respectively at $\bl z \in \mathbb{D}^2$. Equating $\mathcal{K}^{(\lambda,\mu,\nu)}_{1\bar{1}}(0,0)$  and $\mathbb{B}^{(\alpha, \beta, \gamma, \eta)}_{1\bar{1}}(0,0)$, we have
\begin{equation}\label{eqn:detsyminequiv1}
\mu+\frac{3\lambda}{2}=2\alpha \left[ \eta  + \frac{\beta}{\alpha\beta - \gamma^2} \right].   
\end{equation}
Finally, equating $\mathcal{K}^{(\lambda,\mu, \nu)}(z, 0)$ and $\mathbb{B}^{(\alpha, \beta, \gamma, \eta)}(z, 0)$ for $z \in \mathbb{D}$ and then, considering $|z| \to 1$, we obtain
\begin{equation}\label{eqn:detsyminequiv2}
\mu+3\lambda=2\alpha\eta + 2 .   
\end{equation}
From Equation \eqref{eqn:detsyminequiv1} and \eqref{eqn:detsyminequiv2}, we get
$\lambda=\frac{2}{3}\left[\frac{-2\gamma^2}{\alpha\beta-\gamma^2}\right]$ which is a negative real number since $\alpha, \beta > \gamma$. This contradicts that $\lambda > 0$.

Let $\alpha', \beta'$ and $\gamma'$ be positive real numbers such that $\alpha', \beta' > \gamma'$. A similar proof shows that the Hilbert modules $\mathcal{A}^{(\lambda,\mu, \nu)}$ and $\mathcal{H}^{(\alpha, \beta, \gamma)}$ as well as $\mathbb{H}^{(\alpha, \beta, \gamma, \eta)}$ and $\mathcal{H}^{(\alpha' \beta',\gamma')}$ are not unitarily equivalent.

Suppose $\rho_1$ and $\rho_2$ are two positive real numbers. We prove that the Hilbert modules $\mathcal{H}^{(\alpha, \beta, \gamma)}$ and the weighted Bergman module $\mathbb{A}^{(\rho_1, \rho_2)}(\mathbb{D}^2)$ are not unitarily equivalent. 

Let $\Theta_{i\bar j}^{(\rho_1, \rho_2)}$, $1 \le i,j \le 2$, denote the $(i,j)$ th entry of the curvature matrix $\Theta^{(\rho_1, \rho_2)}$ of $\mathbb{A}^{(\rho_1, \rho_2)}$. A direct computation shows that
\begin{align*}
    \Theta_{1\bar 1}^{(\lambda,\mu)}(z,0)=\frac{\rho_1}{(1-|z|^2)^2},\, \Theta_{1\bar 2}^{(\lambda,\mu)}(z,0)= \Theta_{2\bar 1}^{(\lambda,\mu)}(z,0)=0\,\, \mbox{and} \,\, \Theta_{2\bar 2}^{(\lambda,\mu)}(z,0)=\rho_2,
    \end{align*}
for any $z \in \mathbb{D}$. Let us assume that, $\mathcal{H}^{(\alpha, \beta, \gamma)}$ and $\mathbb{A}^{(\rho_1, \rho_2)}$ are unitarily equivalent. Equating,  $\mathcal{B}_{1 \bar2}^{(\alpha, \beta, \gamma, \lambda)}(0,0)$ and $\Theta_{1\bar 2}^{(\rho_1,\rho_2)}(0,0)$, we have $\gamma=0$. This contradicts that $\gamma > 0$.

A similar proof shows that the Hilbert modules $\mathcal{A}^{(\lambda,\mu, \nu)}$ and $\mathbb{A}^{(\rho_1, \rho_2)}$ as well as $\mathbb{H}^{(\alpha, \beta, \gamma, \eta)}$ and $\mathbb{A}^{(\rho_1, \rho_2)}$ are not unitarily equivalent.
\end{proof}

\begin{remark} \label{lb}
Let $G_{(\alpha,\beta,\gamma,\eta)}$ be the  trivial holomorphic line bundle over $\mathbb{D}^2$ equipped with the Hermitian metric $\det \mathcal B^{(\alpha,\beta,\gamma,\eta)} (\bl{z}, \bl{z})$, $\bl{z} \in \mathbb D^2$  and $F_{(\alpha',\beta',\gamma')}$ be the trivial holomorphic line bundle over $\mathbb{D}^2$ equipped with the Hermitian metric  $B_{(\alpha',\beta',\gamma')}(\boldsymbol{z}, \boldsymbol{z})$, $\boldsymbol{z} \in \mathbb D^2$. From Theorem \ref{inequivalance}, it follows that $G_{(\alpha,\beta,\gamma,\eta)}$ is not equivalent to $F_{(\alpha',\beta',\gamma')}$ for any positive reals $\alpha,\beta,\gamma,\eta,\alpha',\beta',\gamma'$ satisfying $ \alpha,\beta>\gamma$ and $\alpha',\beta'>\gamma'$.    
\end{remark}

Let  $ ( M, \omega ) $ be an $ n $-dimensional K\"ahler manifold with the K\"ahler form $ \omega $ given by $ \omega = \displaystyle \sum_{ i, j = 1 }^n g_{ i \bar{ j } } dz_i \wedge d\bar{z}_j $.  Recall that the Ricci form $ \mathrm{Ric} $ of $ ( M, \omega ) $ is defined as 
    $$ \mathrm{Ric} : = - \sqrt{ - 1 } \sum_{ i, j = 1 }^n \partial_i \bar{ \partial_j } \log \det \big( \! \!\big( g_{ i \bar{ j } } \big) \!\! \big)_{ i, j = 1 }^n dz_i \wedge d\bar{z}_j  $$
 A K\"ahler manifold is said to be K\"ahler-Einstein if the Ricci form $ \mathrm{Ric} $ is a scalar multiple of the  K\"ahler form $ \omega $.

\begin{cor}
    Suppose $\alpha, \beta, \gamma$ are positive real numbers such that $\alpha, \beta > \gamma$. The metric $ \displaystyle \sum_{ i, j = 1 }^2 \mathcal{B}_{ i \bar{ j } }^{ ( \alpha.\beta,\gamma ) } dz_i \wedge d \bar{ z_j } $ on $ \mathbb{ D }^2 $ is not K\"ahler-Einstein.
\end{cor}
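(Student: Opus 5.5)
The metric has Kähler form with coefficients $g_{i\bar j}=\mathcal{B}^{(\alpha,\beta,\gamma)}_{i\bar j}$, and $\det(g_{i\bar j})(\bl z)=\det\mathcal{B}^{(\alpha,\beta,\gamma,0)}(\bl z,\bl z)$, which up to the prefactor is the function $F$ appearing in the proof of Proposition \ref{curv det kernel}. The Ricci form therefore has coefficients $-\partial_i\bar\partial_j\log\det(g)$. These are exactly $-\mathbb{B}^{(\alpha,\beta,\gamma,0)}_{i\bar j}$: one takes the formulas of Proposition \ref{curv det kernel} with $\eta=0$, which is permitted because that computation never used $\eta>0$. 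The Kähler–Einstein condition says there is a constant $c$ with $\mathbb{B}^{(\alpha,\beta,\gamma,0)}=-c\,\mathcal{B}^{(\alpha,\beta,\gamma)}$ on $\mathbb{D}^2$. I would argue by contradiction and test this identity only along the slice $(z,0)$, $z\in\mathbb{D}$.

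Along the slice, Proposition \ref{prop 4.6} gives
\[
\mathcal{B}_{1\bar1}(z,0)=\frac{\alpha}{(1-|z|^2)^2},\qquad \mathcal{B}_{1\bar2}(z,0)=-\gamma,\qquad \mathcal{B}_{2\bar2}(z,0)=\beta .
\]
Compare the $(1,1)$ entries after multiplying by $(1-|z|^2)^2$. With $t=1-|z|^2$, the left side is
\[
\frac{2\alpha\beta\,[\alpha\beta-\gamma^2t^2(3-2t)]}{(\alpha\beta-\gamma^2t^2)^2},
\]
and the right side is $-c\alpha$. Let $t\to0$, i.e. $|z|\to1$. The left side tends to $2$, so $c=-2/\alpha$.

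Next compare the $(2,2)$ entries. The left side is the same bracket with no $(1-|z|^2)^{-2}$ factor in front, so it tends to $0$ as $|z|\to1$. The right side equals $-c\beta=2\beta/\alpha\neq0$. This is a contradiction. If the boundary limit looked delicate, I would instead use the value at $z=0$ ($t=1$) as a backup. There the $(1,1)$ entry gives $2\alpha\beta(\alpha\beta-\gamma^2)/(\alpha\beta-\gamma^2)^2=-c\alpha$, and the $(2,2)$ entry gives the same left side $=-c\beta$. Together these force $\alpha=\beta$, and the off-diagonal entry then pins down a second relation. The boundary-limit argument is cleaner, though, since it needs no assumption on the parameters.

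The main obstacle is the bookkeeping. The entries in Proposition \ref{curv det kernel} are stated with the $\eta$-term already included, so one has to check that setting $\eta=0$ gives exactly $\partial_i\bar\partial_j\log\det(g)$. One also has to keep track of the sign convention in the Ricci form, which determines the sign of $c$. Neither affects the argument, because the contradiction comes from the two diagonal entries scaling differently in $(1-|z|^2)$. The off-diagonal entries never need to be checked.
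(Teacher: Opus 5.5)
There is a genuine gap, in your comparison of the $(2,2)$ entries. Put $t=1-|z|^2$ and
\[
h(t)=\frac{2\alpha\beta\,[\alpha\beta-\gamma^2t^2(3-2t)]}{(\alpha\beta-\gamma^2t^2)^2}.
\]
With $\eta=0$, Proposition~\ref{curv det kernel} gives $\mathbb{B}^{(\alpha,\beta,\gamma,0)}_{1\bar1}(z,0)=h(t)/t^2$ and $\mathbb{B}^{(\alpha,\beta,\gamma,0)}_{2\bar2}(z,0)=h(t)$. So the $(2,2)$ entry is $h(t)$ itself, and it tends to $2$ as $|z|\to1$, not to $0$. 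It tends to $0$ only after you multiply by $t^2$, and then the right side $-c\beta t^2$ also tends to $0$. Your two limits therefore give only $2=-c\alpha$ and $2=-c\beta$, i.e.\ $\alpha=\beta$. That is no contradiction, since $\alpha=\beta>\gamma$ is allowed. Both sides scale identically: like $t^{-2}$ in the $(1,1)$ slot and $O(1)$ in the $(2,2)$ slot. A sanity check confirms the boundary argument cannot work as written: it never uses $\gamma>0$. Yet for $\gamma=0$ and $\alpha=\beta$ the metric is $\alpha$ times the product Poincar\'e metric, which is K\"ahler--Einstein; there $h\equiv 2$.

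What is missing is to use the identity at every $t\in(0,1]$, not just in the limit, together with $\gamma\neq0$. The $(1,1)$ comparison says $h(t)=-c\alpha$ for all $t$, so $h$ must be constant. But $h(0^+)=2$ while $h(1)=2\alpha\beta/(\alpha\beta-\gamma^2)>2$, which already gives the contradiction. Your fallback at the origin also works once you finish it. There
\[
\mathbb{B}^{(\alpha,\beta,\gamma,0)}(0,0)=\frac{2}{\alpha\beta-\gamma^2}\begin{pmatrix}\alpha\beta&-\gamma^2\\ -\gamma^2&\alpha\beta\end{pmatrix}
\]
must be a multiple of $\left(\begin{smallmatrix}\alpha&-\gamma\\ -\gamma&\beta\end{smallmatrix}\right)$. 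The diagonal entries force the multiple to be $2\beta/(\alpha\beta-\gamma^2)$ and force $\alpha=\beta$. The off-diagonal entries force it to be $2\gamma/(\alpha\beta-\gamma^2)$. Hence $\gamma=\beta$, contradicting $\beta>\gamma$.

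Repaired this way, your direct route is a self-contained alternative to the paper's. The paper rewrites the Einstein equation as $\partial_i\bar\partial_j\log\det[B^\eta\mathcal{B}]=\partial_i\bar\partial_j\log B^{2\eta+\upsilon}$. It then appeals to the inequivalence of $\mathbb{H}^{(\alpha,\beta,\gamma,\eta)}$ and $\mathcal{H}^{(\alpha',\beta',\gamma')}$ from Theorem~\ref{inequivalance}. That inequivalence is only sketched there, and it rests on exactly this origin-versus-boundary comparison of the $(1,1)$ entry.
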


\begin{proof}
Assume that there exists $\upsilon \in \mathbb{R}$ such that the Ricci form $ \mathrm{Ric}^{ (\alpha,\beta,\gamma ) } $ on $ \mathbb{ D }^2 $ of the metric $ \displaystyle \sum_{ i, j = 1 }^2 \mathcal{B}_{ i \bar{ j } }^{ ( \alpha.\beta,\gamma ) } dz_i \wedge d \bar{ z_j } $ on $\mathbb{D}^2$ satisfies
$$ \mathrm{Ric}^{ (\alpha,\beta,\gamma ) } = \upsilon \sum_{ i, j = 1 }^2 \mathcal{B}_{ i \bar{ j } }^{ (\alpha,\beta,\gamma ) } dz_i \wedge d \bar{ z }_j . $$
Then, the definition of $ \mathrm{ Ric } $ implies that
$$ \sum_{ i, j = 1 }^2 \partial_i \bar{ \partial_j } \log \det \big( \! \!\big( \mathcal{B}_{ i \bar{ j } }^{ (\alpha,\beta,\gamma ) } \big) \!\! \big)_{ i, j = 1 }^2 dz_i \wedge d\bar{z_j}  = \upsilon \sum_{ i, j = 1 }^2 \mathcal{B}_{ i \bar{ j } }^{ (\alpha,\beta,\gamma ) } dz_i \wedge d \bar{ z_j } . $$
Replacing $ \mathcal{B}_{ i \bar{ j } }^{ (\alpha,\beta,\gamma ) }  = \partial_i \bar{ \partial_j } \log B_{ ( \alpha,\beta,\gamma ) } $, $ 1 \leq i, j \leq 2 $, in the equation above, we obtain 
\begin{equation}\label{kheq}
\partial_i \bar{ \partial_j } \log \det \big[ \big(B_{ ( \alpha,\beta,\gamma )} \big)^{ \eta } \big( \! \!\big( \mathcal{B}_{ i \bar{ j } }^{ (\alpha,\beta,\gamma )  } \big) \!\! \big)_{ i, j = 1 }^2 \big] =   \partial_i \bar{ \partial_j } \log \big(  B_{ ( \alpha,\beta,\gamma )  } \big)^{ 2 \eta + \upsilon } , ~ 1 \leq i, j \leq 2 .    \end{equation} 

From Remark \ref{lb}, we know that the trivial Hermitian holomorphic line bundles $G_{(\alpha,\beta,\gamma,\eta)}$ and $F_{(\alpha',\beta',\gamma')}$ over $\mathbb{D}^2$ are inequivalent for any positive real numbers $\alpha, \alpha',\beta,\beta',\gamma,\gamma',\eta$.
 Therefore, choosing $\eta > 0$ such that $2\eta + \upsilon > 0$, we obtain a contradiction from Equation \eqref{kheq}.
\end{proof}

\textit{Acknowledgement}.
The authors are indebted to Professor Gadadhar Misra and Dr. Dinesh Kumar Keshari for the proof of Proposition \ref{propB1}. The second named author is also grateful to Dr. Subrata Shyam Roy for pointing out the proof that $K^{(\lambda)}_{\text{\tiny{anti}}}$ is quasi-invariant for every $\lambda > 0$.

\bibliographystyle{amsplain}

\end{document}